\definecolor{darkgreen}{cmyk}{1,0,1,.2}
\definecolor{m}{rgb}{1,0.1,1}
\newdimen\theight
\def\TeXref#1{%
             \leavevmode\vadjust{\setbox0=\hbox{{\tt
                     \quad\quad  {\small \textrm #1}}}%
             \theight=\ht0
             \advance\theight by \lineskip
             \kern -\theight \vbox to
             \theight{\rightline{\rlap{\box0}}%
             \vss}%
             }}%
\theoremstyle{plain}
\newtheorem{thm}{Theorem}[section]
\newtheorem{lem}[thm]{Lemma}
\newtheorem{cor}[thm]{Corollary}
\newtheorem{prop}[thm]{Proposition}
\newtheorem{mainthm}{Theorem}
\theoremstyle{definition}
\theoremstyle{remark}
\newtheorem{claim}{Claim}
\newtheorem*{ack}{Acknowledgment}
\newcommand{\N}{\mathbb{N}}
\newcommand{\Z}{\mathbb{Z}}
\newcommand{\R}{\mathbb{R}}
\newcommand{\CC}{\mathcal{C}}
\newcommand{\EE}{\mathcal{E}}
\newcommand{\FF}{\mathcal{F}}
\newcommand{\GG}{\mathcal{G}}
\newcommand{\HH}{\mathcal{H}}
\newcommand{\II}{\mathcal{I}}
\newcommand{\JJ}{\mathcal{J}}
\newcommand{\KK}{\mathcal{K}}
\newcommand{\MM}{\mathcal{M}}
\newcommand{\NN}{\mathcal{N}}
\newcommand{\OO}{\mathcal{O}}
\newcommand{\PP}{\mathcal{P}}
\newcommand{\UU}{\mathcal{U}}
\newcommand{\VV}{\mathcal{V}}
\newcommand{\fH}{\mathfrak{H}}
\newcommand{\bm}{\mathbf{m}}
\newcommand{\bv}{\mathbf{v}}
\newcommand{\supp}{\operatorname{supp}}
\newcommand{\im}{\operatorname{im}}
\newcommand{\id}{\operatorname{id}}
\newcommand{\Diffeo}{\operatorname{Diffeo}}
\newcommand{\Hol}{\operatorname{Hol}}
\newcommand{\Prop}{\operatorname{Prop}}
\newcommand{\Emb}{\operatorname{Emb}}
\newcommand{\SO}{\operatorname{SO}}
\newcommand{\ol}{\overline}
\newcommand{\germ}{\boldsymbol{\gamma}}
\DeclareMathOperator{\Homeo}{Homeo}
\DeclareMathOperator{\dom}{dom}
\DeclareMathOperator{\hol}{hol}
\DeclareMathOperator{\Hess}{Hess}
\DeclareMathOperator{\grad}{grad}
\newcommand{\const}{\operatorname{const}}
\title{Molino's description and foliated homogeneity}
\author[J.A. \'Alvarez L\'opez]{Jes\'us A. \'Alvarez L\'opez}
\email{jesus.alvarez@usc.es}
\author[R. Barral Lij\'o]{Ram\'on Barral Lij\'o}
\email{ramonbarrallijo@gmail.com}
\address{Department of Mathematics,
         Institute of Mathematics\\
         University of Santiago de Compostela\\
         15782 Santiago de Compostela\\ Spain}
\thanks{The authors are partially supported by the grants FEDER/Ministerio de Ciencia, Innovaci\'on y Universidades/AEI/MTM2017-89686-P and MTM2014-56950-P, and Xunta de Galicia/2015 GPC GI-1574. The second author is also supported by a Canon Foundation in Europe Research Grant.}
\date{\today}
\subjclass{57R30}
\keywords{Foliated space, equicontinuous, strongly quasi-analytic, Molino's description, foliated homogeneous}
\begin{document}

\maketitle

\begin{abstract}
	The first author and Moreira Galicia have studied a topological version of Molino's theory. It describes equicontinuous foliated spaces satisfying certain conditions of strong quasi-analyticity, reducing their study to the particular case of $G$-foliated spaces. That description is sharpened in this paper by introducing a foliated action of a compact topological group on the resulting $G$-foliated space, like in the case of Riemannian foliations. A $C^\infty$ version is also studied. The triviality of this compact group characterizes compact minimal $G$-foliated spaces, which are also characterized by their foliated homogeneity in the $C^\infty$ case. We also give an example where the projection of the Molino's description is not a principal bundle, and another example of positive topological codimension where the foliated homogeneity cannot be checked by only comparing pairs of leaves---in the case of zero topological codimension, weak solenoids with this property were given by Fokkink and Oversteegen, and later by Dyer, Hurder and Lukina.
\end{abstract}

\tableofcontents

\addtocontents{toc}{\protect\setcounter{tocdepth}{1}}

\section{Introduction}\label{s: intro}

In Molino's theory \cite{Molino1982,Molino1988}, every minimal Riemannian foliation on a closed manifold can be described as a quotient of a minimal Lie foliation on another closed manifold by the foliated action of a compact Lie group, defining a principal bundle. Among minimal Riemannian foliations on closed manifolds, the Lie foliations are just the transitive ones. Here, transitivity means that the evaluation of the infinitesimal transformations of the foliation at every point is the whole tangent space. This property is equivalent to foliated homogeneity; i.e., the transitivity of the canonical left action of the group of foliated diffeomorphisms (diffeomorphisms that map leaves to leaves).

Ghys \cite[Appendix E]{Molino1988} suggested that equicontinuous foliated spaces should be considered as the topological Riemannian foliations. As a confirmation of this interpretation, the first author and Moreira Galicia \cite{AlvarezMoreira2016} gave a topological version of Molino's theory for any compact minimal equicontinuous foliated space $X$ whose holonomy pseudogroup and its closure are strongly quasi-analytic (see Section~\ref{ss: pseudogrs}). It describes $X$ as a foliated quotient of a $G$-foliated space $\widehat X_0$ for some locally compact local group $G$. Here, $G$-foliated spaces are the foliated spaces whose holonomy pseudogroup can be represented by a pseudogroup on some locally compact local group $G$ generated by some local left translations (a precise definition is given in Section~\ref{ss: foliated sps}). In the case where $G$ is a Lie group, the $G$-foliated spaces are the Lie foliations, which agrees with the original Molino's theory. According to the role played by Molino's theory in the study of Riemannian foliations, its topological version should have interesting applications; for instance, it was already used in \cite{AlvarezMoreira2016} to study the growth of leaves. Some features of the original Molino's description were missing in that topological version, like the foliated action of a compact Lie group on $\widehat X_0$, defining a principal bundle over $X$, and the characterization of $G$-foliated spaces using foliated homogeneity. For foliated spaces, foliated homogeneity means the transitivity of the canonical left action of the group of foliated homeomorphisms.

An interesting special case of foliated spaces is given by matchbox manifolds, which are the compact connected minimal foliated spaces of topological codimension zero; i.e., with totally disconnected local transversals. Dyer, Hurder and Lukina \cite{DyerHurderLukina2017} also gave an analogue of Molino's theory for equicontinuous matchbox manifolds, describing them as quotients of homogeneous matchbox manifolds by the action of a compact topological group, called the discriminant, introduced by the same authors in \cite{DyerHurderLukina2016}. An advantage of their construction is that it works without any additional condition, but their description is unique just when the hypotheses about strong quasi-analyticity used in \cite{AlvarezMoreira2016} are fulfilled. Their description is based on the work of Clark and Hurder \cite{ClarkHurder2013}, showing that a matchbox manifold is equicontinuous if and only if it is a weak solenoid (an inverse limit of a tower of covering maps between closed connected manifolds), and it is homogeneous if and only if it is a McCord solenoid (the covering maps can be chosen to be regular); sometimes the term strong solenoid is used for McCord solenoids. Since McCord solenoids are transversely modeled by left translations on profinite groups, they are special cases of $G$-foliated spaces. Thus the Molino's description of Dyer, Hurder and Lukina is a procedure to construct McCord solenoids from weak solenoids. This homogeneity characterization of McCord solenoids agrees with the foliated homogeneity characterization of minimal Lie foliations because  every homeomorphism between matchbox manifolds is foliated since the leaves are their path connected components. It turns out that the triviality of the discriminant characterizes homogeneous matchbox manifolds.

In the present paper, we show that some of these additional features of Molino's theory for equicontinuous matchbox manifolds and Riemannian foliations can be extended to equicontinuous foliated spaces with the indicated strong quasi-analyticity condition. Namely, such a foliated space is also described as quotient of some $G$-foliated space by the foliated action of certain discriminant group, and minimal compact $G$-foliated spaces are characterized using foliated homogeneity. Thus the triviality of the discriminant also characterizes homogeneous minimal compact foliated spaces.  

Let us state our main results and give more detailed observations. The terminology and notation used here are recalled in Section~\ref{s: prelim}. Our first goal is to show the following slight sharpening of the main result of the topological Molino's theory (Section~\ref{s: Molino}).

\begin{mainthm}[{Cf.\ \cite[Theorem~A]{AlvarezMoreira2016}}]\label{mt: Molino}
	Let $X\equiv(X,\FF)$ be a compact, minimal and equicontinuous foliated space, whose holonomy pseudogroup and its closure are strongly quasi-analytic. Then there is a locally compact local group $G$, a compact topological group $H$, a compact minimal $G$-foliated space $\widehat X_0\equiv(\widehat X_0,\widehat\FF_0)$, a foliated map $\hat\pi_0:\widehat X_0\to X$, and a free foliated right $H$-action on $\widehat X_0$ such that the restrictions of $\hat\pi_0$ to the leaves of $\widehat X_0$ are the holonomy coverings of the leaves of $X$, and $\hat\pi_0$ induces a homeomorphism $\widehat X_0/H\to X$. 
\end{mainthm}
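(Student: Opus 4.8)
The plan is to carry out the Molino construction at the level of the holonomy pseudogroup, following \cite{AlvarezMoreira2016}, and to enrich it by identifying the structural compact group $H$ and transporting its action back to the foliated space. First I would fix a regular complete transversal $T$ with relatively compact closure and pass to the holonomy pseudogroup $\HH$, which is compactly generated; compactness and equicontinuity of $X$ make $\HH$ an equicontinuous pseudogroup, so for a suitable compatible metric on $T$ the closure $\ol\HH$ (in the topology of uniform convergence on compact parts of the domains) acts by local isometries, minimality of $X$ makes every $\ol\HH$-orbit dense, and the hypotheses that $\HH$ and $\ol\HH$ are strongly quasi-analytic give both of them a uniform quasi-analyticity radius. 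As in \cite{AlvarezMoreira2016}, this data produces a ``Molino transversal'' $\widehat T$ --- essentially the completion of the orbit of germs of $\ol\HH$ based at a fixed $\hat x_0$ --- carrying a pseudogroup $\widehat\HH$ that projects onto $\ol\HH$, a local group $G$ acting on $\widehat T$ by local left translations which generate $\widehat\HH$, and a proper surjection $\hat\pi\colon\widehat T\to T$ intertwining $\widehat\HH$ with $\ol\HH$, hence with $\HH$.

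Next I would take $H$ to be the group of germs at $\bar x_0=\hat\pi(\hat x_0)$ of the elements of $\ol\HH$ fixing $\bar x_0$, made to act on all of $\widehat T$ on the right by transporting such a germ along elements of $\widehat\HH$ --- the well-definedness of this transport is itself a quasi-analyticity statement for $\ol\HH$. Compactness of $H$ follows from Arzel\`a--Ascoli: the relevant germs are represented by isometric embeddings of a fixed ball fixing its centre, a family that is equicontinuous and, since $T$ is relatively compact, pointwise relatively compact; $H$ is then a closed subgroup of the resulting compact group. Freeness of the $H$-action on $\widehat T$ uses strong quasi-analyticity of $\ol\HH$: if $h\in H$ fixes a point then its germ there is trivial, and the uniform quasi-analyticity radius propagates this to $h=\id$. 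Finally, by density of the $\ol\HH$-orbit every point of $T$ is $g(\bar x_0)$ for some $g\in\ol\HH$, any two such choices differing by an element of $H$, so the fibres of $\hat\pi$ are exactly the $H$-orbits and $\hat\pi$ induces a homeomorphism $\widehat T/H\to T$ --- a map that need not be locally trivial.

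Then I would reconstruct $\widehat X_0$ by pulling back the foliated structure of $X$ along $\hat\pi$: over a foliated chart $U\cong P\times T_U$ put $\widehat U\cong P\times\hat\pi^{-1}(T_U)$ and glue these using the lifts to $\widehat T$ of the holonomy transition maps, which exist precisely because $\hat\pi$ intertwines $\widehat\HH$ with $\HH$. The holonomy pseudogroup of $(\widehat X_0,\widehat\FF_0)$ is then $\widehat\HH$, which is generated by the local left translations of $G$, so $\widehat X_0$ is a $G$-foliated space; it is compact because the induced $\hat\pi_0\colon\widehat X_0\to X$ is proper and $X$ is compact, and minimal because the equicontinuous pseudogroup $\widehat\HH$ has a dense orbit on $\widehat T$ (the one from which $\widehat T$ is built). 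The right $H$-action on $\widehat T$, acting trivially on plaques, extends to a free foliated right $H$-action on $\widehat X_0$, and since $\widehat T/H\cong T$ the induced $\widehat X_0/H\to X$ is a homeomorphism of foliated spaces. That the restriction of $\hat\pi_0$ to each leaf $\widehat L$ of $\widehat X_0$ is the holonomy covering of the leaf $\hat\pi_0(\widehat L)$ of $X$ is the leafwise form of the fact that the $\HH$-orbit of $\hat x_0$ inside $\widehat T$ is the holonomy cover of the $\HH$-orbit of $\bar x_0$, which is inherited from \cite{AlvarezMoreira2016}.

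I expect the real difficulty to lie in the middle step: upgrading the \emph{germ-level} equicontinuity data, which is all that $\ol\HH$ directly supplies, to a genuine \emph{global} compact topological group $H$ acting freely and by foliated maps on $\widehat X_0$, and then checking that $\widehat X_0/H$ is $X$ on the nose --- foliated structure included --- rather than merely leafwise isomorphic to it. Both strong quasi-analyticity hypotheses enter exactly here: the one on $\HH$ lets the germs of $\HH$ define the covering $\hat\pi$ without ambiguity, and the one on $\ol\HH$ both promotes germs to honest elements of $\Homeo(\widehat T)$ and forces the $H$-action to be free; without the latter, $H$ need not act freely and $\hat\pi$ need not be well behaved. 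The remaining verifications --- continuity of the $H$-action in the foliated topology, invariance of $\widehat\FF_0$, and the compactness and minimality of $\widehat X_0$ --- are more bookkeeping than conceptual.
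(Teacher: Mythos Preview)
Your outline is essentially the paper's proof: build the Molino transversal from germs of $\ol\HH$ at a fixed base point, take $H$ to be the isotropy germ group there, let it act on the right, and assemble $\widehat X_0$ from local products $P\times\hat\pi^{-1}(T_U)$ glued along lifted holonomy. Most of this is indeed imported from \cite{AlvarezMoreira2016}, and you correctly isolate the new content as the $H$-action.

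Where you diverge from the paper is in how you realize that action and argue its freeness. The paper works with $\widehat T_0=s^{-1}(u_0)$, the \emph{source} fibre of the germ groupoid of $\ol\HH$ (it explicitly switches from the target fibre used in \cite{AlvarezMoreira2016}), so that $H=s^{-1}(u_0)\cap t^{-1}(u_0)$ acts on $\widehat T_0$ simply by germ composition on the right, $\gamma\cdot\sigma=\gamma\sigma$. Freeness is then immediate groupoid cancellation: $\gamma\sigma=\gamma$ forces $\sigma$ to be the unit germ, with no appeal to quasi-analyticity. Your ``transport along $\widehat\HH$'' and the propagation argument for freeness are correct in spirit but unnecessarily indirect; since your $H$ already consists of germs, there is nothing to promote to honest homeomorphisms of $\widehat T$. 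Similarly, the paper gets compactness of $H$ in one line from properness of $(s,t):\widehat T\to T\times T$ (established in \cite{AlvarezMoreira2016}), whereas your Arzel\`a--Ascoli route reproves this. The $H$-equivariance of the lifted holonomy maps $\hat h$, which you do not mention explicitly, is again automatic in the paper's setup because $\hat h$ acts by left composition and $H$ by right composition. None of this is wrong in your proposal, but the groupoid framing makes the ``difficult middle step'' you anticipate largely disappear.
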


Our new contribution in Theorem~\ref{mt: Molino} is the existence of $H$ satisfying the stated properties. Let us recall some ideas of the construction of $\widehat X_0$ from  \cite{AlvarezMoreira2016} to explain the definition and properties of $H$. Let $\HH$ be the representative of the holonomy pseudogroup of $X$ on a space $T=\bigsqcup_iT_i$ induced by the choice of a good foliated atlas $\{U_i\equiv B_i\times T_i\}$. Its closure is denoted by $\overline\HH$. Fix some $u_0\in T$, and let $\widehat T_0$ be the space of the germs of maps in $\overline\HH$ with source $u_0$, equipped with a locally compact Polish topology, which is different from the sheaf topology. A pseudogroup $\widehat\HH_0$ on $\widehat T_0$ is generated by the maps $\hat h:\hat\pi_0^{-1}(\dom h)\to\hat\pi_0^{-1}(\im h)$ ($h\in\HH$), defined by taking the left product with the germs of $h$. A locally equivariant projection $\hat\pi_0:\widehat T_0\to T_0$ is defined by the target map on germs. We have $\widehat T_0=\bigsqcup_i\widehat T_{i,0}$, where $\widehat T_{i,0}=\hat\pi_0^{-1}(T_i)$. Then, roughly speaking, $\widehat X_0$ is locally defined by replacing $T_i$ with $\widehat T_{i,0}$ in every $U_i$, obtaining a foliated structure defined by a foliated atlas $\{\widehat U_{i,0}\equiv B_i\times\widehat T_{i,0}\}$, and obtaining a foliated map $\hat\pi_0:\widehat X_0\to X$ induced by $\hat\pi_0:\widehat T_0\to T$. From the hypothesis on $\HH$ and $\overline\HH$, it follows that there is some locally compact local group $G$ and some finitely generated dense sublocal group $\Gamma\subset G$ such that $\widehat\HH_0$ is equivalent to the pseudogroup $\GG$ on $G$ generated by the local left translations by elements of $\Gamma$. 

Now let $H=\hat\pi_0^{-1}(u_0)$, which is a compact topological group with the germ product operation. The germ product also defines a right action of $H$ on $\widehat T_0$ such that $\hat\pi_0:\widehat T_0\to T$ induces an identity $\widehat T_0/H\equiv T$.  Using the definition of $\widehat X_0$, we get an induced foliated right action of $H$ on $\widehat X_0$ so that $\hat\pi_0:\widehat X_0\to X$ induces an identity $\widehat X_0/H\equiv X$. Via a pseudogroup equivalence $\widehat\HH_0\sim\GG$, $H$ can be considered as a compact subgroup of $G$, $\widehat\pi_0$ corresponds to the canonical projection $G\to G/H$, and $\HH$ becomes equivalent to the pseudogroup $\GG/H$ on $G/H$ generated by the induced local left action of $\Gamma$.

Under the hypothesis of Theorem~\ref{mt: Molino}, we also prove the following additional properties:
	\begin{enumerate}[(a)]
	
		\item\label{i: (G, Gamma, H, widehat X_0, hat pi_0)} The construction of $(G,\Gamma,H,\widehat X_0,\hat\pi_0)$ is independent of the choices involved up to an obvious equivalence relation (Proposition~\ref{p: equivalent Molino}). 
		
		\item $X$ is a $G$-foliated space for some local group $G$ if and only if $H$ is trivial (Proposition~\ref{p: G-foliated space <=> H = e}). 
		
		\item There is a subgroup in $H$ isomorphic to the holonomy group of every leaf (Proposition~\ref{p: Hol(L_x_0,x_0) subset H}). 
		
		\item $H$ contains no non-trivial normal subgroup of $G$ (Proposition~\ref{p: H has no non-trivial normal subgroups of G}).
		
		\item\label{i: C^infty Molino's description} If $X$ is $C^\infty$, then its Molino's description becomes $C^\infty$ in a unique obvious sense (Proposition~\ref{p: C^infty Molino}).
		
		\item The map $\hat\pi_0$ may not be a fiber bundle (an example is given in Section~\ref{ss: hat pi_0 is not a principal bundle}). This is the only missing property when comparing with the cases of Riemannian foliations and equicontinuous matchbox manifolds.
		
	\end{enumerate}
According to~\eqref{i: (G, Gamma, H, widehat X_0, hat pi_0)}, $(G,\Gamma,H,\widehat X_0,\hat\pi_0)$ is called the Molino's description of $X$; in particular, $G$ is called the structural local group according to \cite{Molino1988,AlvarezMoreira2016}, and $H$ is called the discriminant group according to \cite{DyerHurderLukina2016}. In~\eqref{i: C^infty Molino's description}, a foliated space is said to be $C^\infty$ when it has a foliated atlas whose changes of coordinates are $C^\infty$ along the leaves, and their leafwise partial derivatives of arbitrary order are continuous (on the ambient space). Other related concepts are defined in the same way, like $C^\infty$ foliated maps, $C^\infty$ diffeomorphisms, (leafwise) tangent space, (leafwise) Riemannian metrics, (leafwise) Riemannian foliated spaces, etc. The concept of $C^\infty$ foliated homogeneity is defined like foliated homogeneity using $C^\infty$ foliated diffeomorphisms.

Our second goal is to characterize $G$-foliated spaces using ($C^\infty$) foliated homogeneity, showing the following results.

\begin{mainthm}\label{mt: foliated homogeneous => G-foliated space}
	If a foliated space $X$ is compact, minimal and foliated homogeneous, then it satisfies hypotheses of Theorem~\ref{mt: Molino} and is a $G$-foliated space for some locally compact local group $G$.
\end{mainthm}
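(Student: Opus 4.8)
The plan is to verify each of the hypotheses of Theorem~\ref{mt: Molino} in turn, then deduce from the foliated homogeneity that the discriminant group $H$ must be trivial, and finally invoke Proposition~\ref{p: G-foliated space <=> H = e}. First I would observe that the nontrivial structural hypotheses to be checked are equicontinuity, strong quasi-analyticity of the holonomy pseudogroup $\HH$, and strong quasi-analyticity of its closure $\ol\HH$. For equicontinuity, the key is that the foliated transformation group $\Homeo(X,\FF)$ acts transitively on $X$; choosing a representative $\HH$ of the holonomy pseudogroup on a transversal $T$ and a point $u_0\in T$, any other point $u\in T$ can be moved to a point on the leaf of $u_0$ by a foliated homeomorphism, and this homeomorphism can be expressed in the transverse coordinates as a local transformation that conjugates a neighbourhood of $u$ in the dynamics to a neighbourhood of $u_0$. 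Thus all points of $T$ have ``the same'' local transverse dynamics, up to conjugation by an equicontinuous family of local homeomorphisms; from here one argues that the whole pseudogroup is equicontinuous. (One should be careful that a foliated homeomorphism need not preserve the chosen transversal $T$ globally, only locally after composing with holonomy, so the conjugating maps are germs/local maps; this is the kind of bookkeeping that has to be done carefully but presents no conceptual difficulty.)

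Next, for strong quasi-analyticity of $\HH$ and of $\ol\HH$: again the homogeneity forces the local structure of $\ol\HH$ near every point of $T$ to be conjugate to its structure near $u_0$. Strong quasi-analyticity is a condition about germs of elements of the (closure of the) pseudogroup fixing a point and being the identity on an open set: if such behaviour occurred at some point, by transitivity and conjugation it would occur at $u_0$ as well, and conversely. So it suffices to check that $\ol\HH$ (hence also $\HH\subset\ol\HH$) is strongly quasi-analytic near $u_0$. Here I would use that foliated homogeneity of a \emph{compact minimal} foliated space is quite restrictive: the stabilizer structure is uniform, and in fact one can show directly that the only element of $\ol\HH$ that is the identity on a neighbourhood of $u_0$ and fixes $u_0$ is the germ of the identity — equivalently, that the discriminant group $H$, defined in the excerpt as the group of germs at $u_0$ of elements of $\ol\HH$ fixing $u_0$, acts effectively on every neighbourhood of $u_0$.

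The heart of the argument, and the step I expect to be the main obstacle, is showing that $H$ is trivial. The idea is that a foliated homeomorphism $\phi\in\Homeo(X,\FF)$ induces, after the Molino construction, a homeomorphism $\widehat\phi$ of $\widehat X_0$ commuting with (or at least normalizing) the right $H$-action, and that the transitivity of $\Homeo(X,\FF)$ on $X$ lifts to a transitivity statement on $\widehat X_0$ that is incompatible with a nontrivial $H$ unless $H$ acts trivially. More concretely: pick $x\in X$ and two points $\hat x,\hat x'\in\hat\pi_0^{-1}(x)$ in the same $H$-orbit with $\hat x'=\hat x\cdot h$, $h\neq e$; choose $\phi$ with $\phi(x)=x$ realizing a prescribed germ of holonomy and track how $\widehat\phi$ permutes the fibre. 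Because $X$ is minimal and compact, and because \emph{every} holonomy germ at $x$ that can be realized inside $\ol\HH$ can in fact be realized (up to the $H$-ambiguity) by an honest foliated homeomorphism — this is where homogeneity is essential — one forces the $H$-ambiguity to collapse. Alternatively, and perhaps more cleanly, one can argue via Proposition~\ref{p: Hol(L_x_0,x_0) subset H} together with a dimension/cardinality count: homogeneity makes all leaves holonomy-free (a foliated homeomorphism carries a leaf with trivial holonomy to any other leaf, and minimality provides at least one such leaf by a Baire argument), so the holonomy coverings $\hat\pi_0$ restricted to leaves are homeomorphisms, the leaves of $\widehat X_0$ and of $X$ coincide, and then the freeness of the $H$-action combined with $\widehat X_0/H\cong X$ forces $H=\{e\}$. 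I would develop this last route: establish that foliated homogeneity $\Rightarrow$ every leaf is without holonomy $\Rightarrow$ $\hat\pi_0$ is a leafwise homeomorphism $\Rightarrow$ the free $H$-action on $\widehat X_0$ has $\widehat X_0/H\cong X$ with $\hat\pi_0$ leafwise bijective, which (since $H$ is a compact group acting freely and the quotient map is leafwise injective) can only happen for $H$ trivial. Once $H=\{e\}$, Proposition~\ref{p: G-foliated space <=> H = e} gives that $X$ is a $G$-foliated space, completing the proof.
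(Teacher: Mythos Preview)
Your final route has a genuine gap: the implication ``every leaf without holonomy $\Rightarrow$ $\hat\pi_0$ leafwise bijective $\Rightarrow$ $H=\{e\}$'' is false. Even when all leaves of $X$ have trivial holonomy, so that $\hat\pi_0$ restricts to a homeomorphism from each $\widehat\FF_0$-leaf onto an $\FF$-leaf, the preimage $\hat\pi_0^{-1}(L)$ of a single leaf $L$ may be a disjoint union of many $\widehat\FF_0$-leaves permuted freely by $H$; nothing forces $H$ to fix an individual leaf. The paper's own example in Section~\ref{ss: foliated homogeneity may not be told by the leaves} is exactly this situation: a suspension with $H\cong\SO(2)$, no holonomy, and all leaves isometric, yet $X$ is \emph{not} a $G$-foliated space. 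The point is that $H$ records isotropy germs of $\ol\HH$ at $u_0$, not of $\HH$; triviality of the holonomy groups says nothing about germs arising only in the closure.

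The paper's proof bypasses the Molino construction entirely. The tool you are missing is Effros' theorem: since $\Homeo(X,\FF)$ is Polish and acts transitively on the compact space $X$, the action is micro-transitive, so for any $\epsilon>0$ there is $\delta>0$ such that points at distance $<\delta$ are related by a foliated homeomorphism with $D(\phi,\id_X)<\epsilon$. If some $h_\II\in S$ fixes $u\in\dom h_\II$, realize $h_\II$ by a leafwise loop $c$ based at $x=\sigma_{i_0}(u)$; for any other $v\in\dom h_\II$, micro-transitivity gives $\phi$ close to $\id_X$ with $\phi(x)=\sigma_{i_0}(v)$, the loop $\phi c$ stays $\widetilde\UU$-covered by the same $\II$, and one reads off $h_\II(v)=v$. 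This shows $\HH$ is \emph{strongly locally free}, hence strongly quasi-analytic. The same argument, run on a sequence $h_{\II_k}|_O\to g\in\ol S$, proves $\ol\HH$ is strongly locally free, and then Proposition~\ref{p: list, G}-\eqref{i: G} yields the $G$-foliated structure directly, with no appeal to $H$ or $\widehat X_0$. Your conjugation heuristics for equicontinuity and quasi-analyticity are aiming in the right direction, but it is the uniform smallness provided by Effros that makes them go through.
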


\begin{mainthm}\label{mt: C^infty foliated homogeneous <=> G-fol sp}
	 Suppose that a foliated space $X$ is compact, minimal and $C^\infty$. Then the following conditions are equivalent:
	 	\begin{enumerate}
	 
	 		\item\label{i: C^infty foliated homogeneous} $X$ is $C^\infty$ foliated homogeneous.
			
			\item\label{i: foliated homogeneous} $X$ is foliated homogeneous. 
			
			\item\label{i: G-foliated sp} $X$ satisfies the hypotheses of Theorem~\ref{mt: Molino} and is a $G$-foliated space for some locally compact local group $G$.
		\end{enumerate}
\end{mainthm}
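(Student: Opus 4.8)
The plan is to prove the cycle of implications $(\ref{i: C^infty foliated homogeneous})\Rightarrow(\ref{i: foliated homogeneous})\Rightarrow(\ref{i: G-foliated sp})\Rightarrow(\ref{i: C^infty foliated homogeneous})$. The first implication is immediate: a $C^\infty$ foliated diffeomorphism is in particular a foliated homeomorphism, so $C^\infty$ foliated homogeneity entails foliated homogeneity. The second, $(\ref{i: foliated homogeneous})\Rightarrow(\ref{i: G-foliated sp})$, is exactly Theorem~\ref{mt: foliated homogeneous => G-foliated space}. Hence all the content is in $(\ref{i: G-foliated sp})\Rightarrow(\ref{i: C^infty foliated homogeneous})$, which I would establish as follows.

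First I would reduce to the Molino space. If $X$ is a compact minimal $C^\infty$ $G$-foliated space satisfying the hypotheses of Theorem~\ref{mt: Molino}, then by Proposition~\ref{p: G-foliated space <=> H = e} its discriminant group $H$ is trivial, by Proposition~\ref{p: Hol(L_x_0,x_0) subset H} every leaf has trivial holonomy, so the leafwise restrictions of $\hat\pi_0$ are diffeomorphisms, and by Proposition~\ref{p: C^infty Molino} the map $\hat\pi_0\colon\widehat X_0\to X$ is a $C^\infty$ foliated diffeomorphism. Thus it is enough to show that, for a compact minimal $C^\infty$ $G$-foliated space $X$ with trivial discriminant group, the group of $C^\infty$ foliated diffeomorphisms of $X$ acts transitively on $X$. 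Transitivity along each leaf is elementary: in a foliated chart $\varphi\colon U\to V\times T$, any compactly supported $C^\infty$ diffeomorphism $\psi$ of $V$ gives, via $(v,t)\mapsto(\psi(v),t)$ extended by the identity on $X\setminus U$, a $C^\infty$ foliated diffeomorphism of $X$ moving points inside plaques of $U$, and concatenating such maps along a leafwise path gives transitivity on each leaf. So the orbits of this group are saturated, and it remains to find, for any two leaves $L,L'$, a $C^\infty$ foliated diffeomorphism of $X$ carrying $L$ onto $L'$.

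This is the heart of the matter: transverse homogeneity. Since $X$ is $G$-foliated, its holonomy pseudogroup $\HH$ on the transversal $T$ is generated by local left translations of $G$; the local right translations of $G$ commute with these, hence normalize $\HH$ and permute its orbits, and since $G$ acts transitively on itself on the right while, by minimality, the $\HH$-orbits are dense, some right translation $R_g$ with $g\in G$ arbitrarily small carries any given $\HH$-orbit onto any other. As $\HH$-orbits correspond to leaves, it suffices to realize these transverse right translations by global $C^\infty$ foliated diffeomorphisms of $X$. When $G$ has positive dimension I would lift the left-invariant vector fields of $G$ --- which are exactly the $\HH$-invariant transverse vector fields --- to $C^\infty$ foliated vector fields on $X$ by a foliated partition of unity; these are complete because $X$ is compact, and their flows are $C^\infty$ foliated diffeomorphisms that induce on $T$ the right translations by one-parameter subgroups of $G$, which already move any leaf onto any other. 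When $G$ is totally disconnected it is profinite, $X$ is the foliated analogue of a McCord solenoid --- an inverse limit of finite-to-one $C^\infty$ foliated coverings of a compact manifold --- carrying a free $C^\infty$ foliated right $G$-action given by these right translations, and one concludes as Clark and Hurder do \cite{ClarkHurder2013}. Composing the diffeomorphism so obtained with the leafwise ones of the previous step yields transitivity on $X$.

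I expect this last step to be the main obstacle: one must reconcile the facts that $G$ is only a local group, so its right translations are a priori only locally defined; that a foliated diffeomorphism of $X$ need not preserve any fixed transversal; and that when $G$ is totally disconnected no flows are available. It is here that compactness of $X$, equicontinuity, and --- crucially --- the triviality of the discriminant group, which is precisely what makes the closed pseudogroup act freely enough for the right translations to be unambiguously defined, come into play. When $G$ is connected a uniform argument is available, realizing the elements of $\ol\HH$ near the identity by $C^\infty$ foliated diffeomorphisms of $X$ near the identity through an isotopy-extension argument inside the closed pseudogroup.
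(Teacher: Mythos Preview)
Your cycle of implications is correct, and your identification of the transverse mechanism---right translations on $G$ commute with the left translations generating $\HH$, hence should lift to foliated self-maps of $X$---is exactly the right idea. The leafwise transitivity argument is fine and matches the paper's Proposition~\ref{p: leafwise homogeneous}. The reduction to the Molino space is harmless but unnecessary: once $H$ is trivial you are already working on $X$ itself.

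The genuine gap is the one you flag yourself: you have not actually constructed global $C^\infty$ foliated diffeomorphisms realizing the transverse right translations. Your case split does not close. In the positive-dimensional case you tacitly assume $G$ is a local Lie group so that left-invariant vector fields exist; but $G$ is an arbitrary local group. In the totally disconnected case, the Clark--Hurder argument you invoke is specific to matchbox manifolds (topological codimension zero), where every self-homeomorphism is automatically foliated and the solenoid structure gives a genuine global $G$-action; neither holds here. And even granting Jacoby's local product decomposition, a general local group mixes these two cases, so you would still need a unified construction. Your final paragraph describes the difficulty accurately but does not resolve it.

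The paper's route avoids the case split entirely via the Riemannian center of mass (Section~\ref{ss: center of mass}). Given the right local action $\chi$ of $G$ on $T$ induced from right multiplication on $G$, each foliated chart $(U_i,\xi_i)$ supports an obvious lift $\phi_i:U_i\times P\to\widetilde U_i$ (freeze the $B_i$-coordinate, apply $\chi$ to the $T_i$-coordinate). These lifts agree transversally but disagree leafwisely on overlaps. One merges them by taking, for a $C^\infty$ partition of unity $\{\lambda_i\}$, the center of mass of the probability measure $\sum_i\lambda_i(x)\,\delta_{\phi_i(x,g)}$, which is supported in a single convex plaque by Claim~\ref{cl: tilde p_i phi_i(x,g) = tilde p_i phi_j(x,g)}. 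This produces a single $C^\infty$ foliated map $\phi:X\times Q_{\text{\rm pt}}\to X$ (Proposition~\ref{p: exists a right local transverse action}); openness of $\Diffeo^\infty$ in the uniform-transverse-equivalence sense (Proposition~\ref{p: Diffeo^r(FF,FF') cap MM is open in MM}) then gives $\phi^g\in\Diffeo(X,\FF)$ for $g$ near $e$. This ``structural right local transverse action'' works uniformly for any local group $G$---no vector fields, no solenoid structure---and the density of $\Gamma$-orbits in $G$ translates into the saturation property~\eqref{FF(phi(x times P)) = X}, which combined with leafwise homogeneity finishes the proof (Corollary~\ref{c: HH(bar phi(u times Q)) = T => X is C^infty foliated homogeneous}).
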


Theorem~\ref{mt: foliated homogeneous => G-foliated space} follows with an adaptation of an argument of Clark and Hurder \cite[Theorem~5.2]{ClarkHurder2013}, using that the canonical left action of the group of foliated homeomorphisms is micro-transitive by a theorem of Effros \cite{Effros1965,vanMill2004}.

To prove Theorem~\ref{mt: C^infty foliated homogeneous <=> G-fol sp}, it is enough to show ``$\text{\eqref{i: G-foliated sp}}\Rightarrow\text{\eqref{i: C^infty foliated homogeneous}}$'' by Theorem~\ref{mt: foliated homogeneous => G-foliated space}. Assuming~\eqref{i: G-foliated sp}, we get the so-called structural right local transverse action, which has its own interest; for instance, it was introduced and used in \cite{AlvarezKordyukov2008a} for Lie foliations. It is the unique ``foliated right local action up to leafwise homotopies'' of $G$ on $X$, which corresponds to the local right translations on $G$ via foliated charts (Proposition~\ref{p: exists a right local transverse action} and Section~\ref{s: structural local transverse action}). Its construction uses a partition of unity subordinated to a foliated atlas and the leafwise center of mass for some (leafwise) Riemannian metric to merge the obvious right local transverse actions on the domains of foliated charts. The structural right local transverse action gives~\eqref{i: C^infty foliated homogeneous} because we always have leafwise homogeneity (Proposition~\ref{p: leafwise homogeneous}).

In Theorem~\ref{mt: C^infty foliated homogeneous <=> G-fol sp}, our proof of  ``$\text{\eqref{i: G-foliated sp}}\Rightarrow\text{\eqref{i: C^infty foliated homogeneous}}$'' needs the $C^\infty$ structure of $X$ because we use the leafwise center of mass as an auxiliary tool. Of course, it could be possible to avoid the $C^\infty$ condition and show ``$\text{\eqref{i: G-foliated sp}}\Rightarrow\text{\eqref{i: foliated homogeneous}}$'' directly with other tools, but that procedure would certainly require more work.

Since there exist leaves without holonomy, and since the (differentiable) quasi-isometry type of the leaves is independent of the choice of a (leafwise) Riemannian metric on $X$, it follows that $X$ is not foliated homogeneous if there is a leaf with holonomy, or if there is a pair of non-quasi-isometric leaves. The converse statement is not true in general.  Fokkink and Oversteegen \cite[Theorem~35]{FokkinkOversteegen2002} constructed an example of a non-homogeneous weak solenoid all of whose leaves are simply connected, and therefore it has no holonomy, and its leaves are quasi-isometric to each other because weak solenoids are suspension foliated spaces. Dyer, Hurder and Lukina constructed more examples of such weak solenoids \cite[Theorem~10.7]{DyerHurderLukina2017}. In Section~\ref{ss: foliated homogeneity may not be told by the leaves}, we give an example of a compact foliated space $X$ satisfying the conditions of Theorem~\ref{mt: Molino}, which is not foliated homogeneous and has no holonomy, whose leaves are quasi-isometric to each other, and with locally connected local transversals (thus it is not a weak solenoid).

\begin{ack}
We thank the referee for many suggestions that have improved the paper.
\end{ack}

\section{Preliminaries}\label{s: prelim}

See \cite[Chapter~II]{MooreSchochet1988}, \cite{Ghys2000} and \cite[Chapter~11]{CandelConlon2000-I} for the needed preliminaries on foliated spaces and interesting examples, and \cite{Haefliger1980,Haefliger1985,Haefliger1988} for the preliminaries on pseudogroups. We mainly follow \cite[Sections~2 and~4A]{AlvarezMoreira2016}, which in turn follows \cite{AlvarezCandel2009,AlvarezCandel2010,AlvarezCandel2018}. Some ideas are also taken from \cite{ClarkHurder2013,AlvarezMasa2008,AlvarezMasa2006}. The needed basic concepts and tools are recalled here for the reader's convenience, and a few new observations are also made.

In the whole paper, unless otherwise stated, spaces are assumed to be locally compact and Polish, and maps are assumed to be continuous. In particular, this applies to foliated spaces, topological groups, local groups and partial maps.

\subsection{Pseudogroups}\label{ss: pseudogrs}

For spaces $T$ and $T'$, the notation $\phi:T\rightarrowtail T'$ is used for a partial map. We will only consider the case where its domain, $\dom\phi$, is open in $T$. The germ of $\phi$ at any $u\in\dom\phi$ will be denoted by $\germ(\phi,u)$. If $\phi$ is an open embedding, we may identify $\phi$ with the homeomorphism $\phi:\dom\phi\to\im\phi$ of an open subset of $T$ to an open subset of $T'$, whose inverse can be considered as a partial map with open domain, $\phi^{-1}:T'\rightarrowtail T$; in particular, when $T=T'$, such a $\phi$ is called a {\em local transformation\/} of $T$.

Given another space $T''$, let $\Phi$ and $\Psi$ be families of partial maps $T\rightarrowtail T'$ and $T'\rightarrowtail T''$, respectively, with open domains. We use the notation $\Psi\Phi=\{\,\psi\phi\mid\phi\in\Psi,\ \psi\in\Psi\,\}$; in particular, $\Phi^n=\Phi\cdots\Phi$ ($n$ times) if $T=T'$ and $n\in\Z^+$. If $\Phi$ consists of open embeddings, let $\Phi^{-1}=\{\,\phi^{-1}\mid\phi\in\Phi\,\}$.

Recall that a {\em pseudogroup\/} $\HH$ on $T$ is a family of local transformations of $T$ that contains $\id_T$, and is closed by the operations of composition, inversion, restriction to open sets and union. It is said that $\HH$ is {\em generated\/} by $S\subset\HH$ if $\HH$ can be obtained from $S$ using the above operations. By considering a pseudogroup as a direct generalization of a group of transformations, the basic dynamical concepts have obvious generalizations to pseudogroups, like {\em orbits\/}, {\em saturation\/}, ({\em topological\/}) {\em transitivity\/} and {\em minimality\/}. The orbit space is denoted by $T/\HH$. The $\HH$-saturation of any $A\subset T$ is denoted by $\HH(A)$, and the orbit of any $u\in T$ by $\HH(u)$. For any open $V\subset T$, the {\em restriction\/} $\HH|_V:=\{\,h\in\HH\mid \dom h,\im h\subset V\,\}$ is a pseudogroup.

Given another pseudogroup $\HH'$ on $T'$, a {\em morphism\/} $\Phi\colon\HH\to\HH'$ is a maximal collection of partial maps $T\rightarrowtail T'$ with open domain such that $\HH'\Phi\HH\subset\Phi$, $T=\bigcup_{\phi\in\Phi}\dom\phi$, and, for all $\phi,\psi\in\Phi$ and $u\in\dom\phi\cap\dom\psi$, there is some $h'\in\HH'$ so that $\phi(u)\in\dom h'$ and $\germ(h'\phi,u)=\germ(\psi,u)$. Let $\Phi_0$ be a family of partial maps $T\rightarrowtail T'$ with open domain such that $T=\HH(\bigcup_{\phi\in\Phi}\dom\phi)$, and there is a subset $S$ of generators of $\HH$ such that, if $\phi,\psi\in\Phi_0$, $h\in S$ and $u\in\dom\phi\cap\dom\psi h$, then there is some
$h'\in\HH'$ so that $\phi(u)\in\dom h'$ and $\germ(h'\phi,u)=\germ(\psi h,u)$. Then there is a unique morphism $\Phi:\HH\to\HH'$ containing $\Phi_0$, which is said to be {\em generated\/} by $\Phi_0$. 

With the terminology of Haefliger \cite{Haefliger1980,Haefliger1985,Haefliger1988}, an {\em \'etal\'e morphism\/} $\Phi:\HH\to\HH'$ is a maximal family of homeomorphisms of open subsets of $T$ to open subsets of $T'$ such that $\HH'\Phi\HH\subset\Phi$, $T=\bigcup_{\phi\in\Phi}\dom\phi$ and $\Phi\Phi^{-1}\subset\HH'$. If moreover $\Phi^{-1}$ is an \'etal\'e morphism, then $\Phi$ is called an {\em equivalence\/}, and the pseudogroups $\HH$ and $\HH'$ are said to be {\em equivalent\/}. If $\Phi_0$ is a family of homeomorphisms of open subsets of $T$ to open subsets of $T'$ such that $T=\HH(\bigcup_{\phi\in\Phi}\dom\phi)$ and $\Phi_0\HH\Phi_0^{-1}\subset\HH'$, then there is a unique \'etal\'e morphism $\Phi:\HH\to\HH'$ containing $\Phi_0$, which is said to be {\em generated\/} by $\Phi_0$. Equivalent pseudogroups are considered to have the same dynamics. For instance, $\HH$ is equivalent to $\HH|_V$ for any open $V\subset T$ that meets all $\HH$-orbits. In fact, $\Phi:\HH\to\HH'$ is an equivalence if and only if $\GG=\HH\cup\HH'\cup\Phi\cup\Phi^{-1}$ is a pseudogroup on $T\sqcup T'$ such that $T$ and $T'$ meet all $\GG$-orbits, $\GG|_T=\HH$ and $\GG|_{T'}=\HH'$.

The germs $\germ(h,u)$, for $h\in\HH$ and $u\in\dom h$, form a topological groupoid $\fH$, equipped with the sheaf topology and the operation induced by composition. Its unit subspace can be identified with $T$. In fact, $\fH$ is an \'etal\'e groupoid (the source and target maps, $s,t:\fH\to T$, are local homeomorphisms). Given $x\in T$, the group of elements of $\gamma\in\fH$ with $s(\gamma)=t(\gamma)=x$ is called the {\em germ group\/} of $\HH$ at $x$.

Let us recall the following definitions of properties that $\HH$ may have:
	\begin{description}
	
		\item[Compact generation] This means that there is a relatively compact open $U\subset T$, which meets all orbits, such that $\HH|_U$ is generated by a finite set, $E=\{h_1,\dots,h_k\}$, and every $h_i$ has an extension $\tilde h_i\in\HH$ with $\ol{\dom h_i}\subset\dom\tilde h_i$. This $E$ is called a {\em system of compact generation\/} of $\HH$ on $U$.
		
		\item[(Strong) equicontinuity] This means that there are an open cover $\{T_i\}$ of $T$ and a metric $d_i$ inducing the topology of every $T_i$, and $\HH$ is generated by some subset $S\subset\HH$, with $S^2\subset S=S^{-1}$ ($S$ is symmetric and closed by compositions\footnote{The term {\em pseudo$*$group\/} was used in \cite{AlvarezMoreira2016} when these conditions are satisfied. This term was introduced in \cite{Matsumoto2010} for a family that moreover contains $\id_T$ and is also closed by restrictions to open subsets.}), such that, 
for every $\epsilon>0$, there is some $\delta>0$ so that 
			\[
				d_i(x,y)<\delta\Longrightarrow d_j(h(x),h(y))<\epsilon 
			\]
		for all $h\in S$, indices $i,j$, and $x,y\in T_i\cap h^{-1}(T_j\cap\im h)$.
		
		\item[Strong quasi-analyticity] This means that $\HH$ is generated by some subset $S\subset\HH$, with $S^2\subset S=S^{-1}$, such that, if any $h\in S$ is the identity on some non-empty open subset of its domain, then $h=\id_{\dom h}$.
		
		\item[Strong local freeness] This means that $\HH$ is generated by some subset $S\subset\HH$, with $S^2\subset S=S^{-1}$, such that, if any $h\in S$ fixes some point in its domain, then $h=\id_{\dom h}$. Equivalently, this means that $\HH$ is strongly quasi-analytic and all of its germ groups are trivial.
		
	\end{description}
These properties are invariant by equivalences. If compact generation holds with some $U$, then it also holds with any other relatively compact open subset of $T$ that meets all orbits. Let $\PP$ denote any of the above last three properties. If $\PP$ holds with $S$, then it also holds with its {\em localization\/},
	\[
		S_{\text{\rm loc}}=\{\,h|_O\mid h\in S,\  \text{$O$ is open in $\dom h$}\,\}\;.
	\]
Moreover we can add $\id_T$ to $S$ if desired (obtaining $S^2=S$). If $\HH$ is compactly generated and satisfies $\PP$, then, for every relatively compact open $U\subset T$ that meets all orbits, we can choose a system of compact generation $E$ of $\HH$ on $U$ such that $\HH|_U$ also satisfies $\PP$ with $S=\bigcup_{n=1}^\infty E^n$. The following result lists some needed non-elementary properties.

\begin{prop}[{\cite[Proposition~8.9, and Theorems~11.1 and~12.1]{AlvarezCandel2009}, \cite{Tarquini2004} and \cite[Theorems 3.3 and 5.2]{AlvarezCandel2010}}]\label{p: list, HH}
Suppose that $\HH$ is compactly generated, equicontinuous and strongly quasi-analytic. Then the following holds:
	\begin{enumerate}
		
		\item\label{i: VV} Assume that $\HH$ satisfies the condition of compact generation with $U$, $E=\{h_1,\dots,h_k\}$ and $\tilde h_1,\dots,\tilde h_k$. For every $h=h_{i_n}\cdots h_{i_1}\in\bigcup_{n=1}^\infty E^n$, let $\tilde h=\tilde h_{i_n}\cdots\tilde h_{i_1}$. Then there is a finite family $\VV$ of open subsets of $T$ covering $U$ such that, for any $h\in\bigcup_{n=1}^\infty E^n$ and $V\in\VV$, we have $V\subset\dom\tilde h$ if $V\cap\dom h\ne\emptyset$.
		
		\item\label{i: ol HH} Suppose that $\HH$ satisfies the equicontinuity condition with a set $S$. Then $\ol{C(O,T)\cap S_{\text{\rm loc}}}$ consists of local transformations for all small enough open subsets $O\subset T$, where the closure is taken in the compact-open topology, and the pseudogroup $\ol\HH$ generated by such transformations is equicontinuous. More precisely, $\ol\HH$ satisfies the equicontinuity condition with the set $\ol S$ determined by the condition $C(O,T)\cap\ol S=\ol{C(O,T)\cap S_{\text{\rm loc}}}$ for all $O$ as above. 
		
		\item\label{i: transitive <=> minimal} The orbit closures are minimal sets, and therefore $\HH$ is transitive if and only if it is minimal.
		
	\end{enumerate}
\end{prop}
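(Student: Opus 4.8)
The plan is to assemble the three items directly from the quoted sources, since each is a known structural fact about compactly generated, equicontinuous, strongly quasi-analytic pseudogroups; what I would supply here is a reminder of the mechanism behind each one and a check that our standing hypotheses provide exactly the input those references require.

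For \eqref{i: VV} I would fix the system of compact generation, with $U$, $E=\{h_1,\dots,h_k\}$ and extensions $\tilde h_1,\dots,\tilde h_k$ satisfying $\ol{\dom h_i}\subset\dom\tilde h_i$, together with metrics $d_i$ on an open cover $\{T_i\}$ realizing equicontinuity via a symmetric, composition-closed generating set $S\supset E$. The point is that the equicontinuity modulus is uniform over all of $S$, hence over every word $h=h_{i_n}\cdots h_{i_1}$ regardless of its length, while the conditions $\ol{\dom h_i}\subset\dom\tilde h_i$ furnish a uniform positive ``collar'' $\rho>0$ around each $\dom h_i$ inside $\dom\tilde h_i$. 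Choosing $\epsilon<\rho$, pulling back a modulus $\delta$, and covering the compact set $\ol U$ by finitely many $d_i$-balls of radius below $\delta$ produces $\VV$: an induction on the word length shows that any $V\in\VV$ meeting $\dom h$ is forced into $\dom\tilde h$. This is the argument of \cite[Proposition~8.9]{AlvarezCandel2009}.

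Item \eqref{i: ol HH} is the heart of the matter and the step I expect to be the main obstacle. For $O$ small enough and relatively compact, $C(O,T)\cap S_{\text{\rm loc}}$ is equicontinuous and pointwise relatively compact, so by Arzel\`a--Ascoli its compact-open closure is compact and equicontinuous; the real work is to show that every limit $g$ of such maps is again a local transformation, i.e.\ an open embedding. Continuity of $g$ and the equicontinuity estimate pass to the limit at once. Injectivity and openness are where strong quasi-analyticity becomes indispensable, since an a priori limit of injective open maps may collapse: the standard device, as in \cite{Tarquini2004} and \cite[Theorems~11.1 and~12.1]{AlvarezCandel2009}, is to show that a non-injective or non-open limit would, after composing with inverses of suitable nearby maps, yield a non-identity partial transformation in the closure that restricts to the identity on a non-empty open subset of its domain, contradicting strong quasi-analyticity. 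Once $\ol\HH$ is known to be a pseudogroup of local transformations, the characterisation $C(O,T)\cap\ol S=\ol{C(O,T)\cap S_{\text{\rm loc}}}$ and the inheritance of equicontinuity by $\ol S$ are then formal.

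Finally, \eqref{i: transitive <=> minimal} follows from \eqref{i: ol HH}. With $\ol\HH$ equicontinuous one obtains that every $\ol\HH$-orbit closure, equivalently every $\HH$-orbit closure, is a minimal set: given $v\in\ol{\HH(u)}$, the equicontinuity estimate on the closure lets one transport approximations of $v$ back to approximations of $u$, so $u\in\ol{\HH(v)}$, and hence the orbit closures partition $T$ into minimal sets, exactly as in \cite[Theorems~3.3 and~5.2]{AlvarezCandel2010}. Transitivity means some orbit is dense, so some orbit closure equals $T$; minimality of that orbit closure then forces every orbit closure to be $T$, which is minimality. The converse implication is immediate.
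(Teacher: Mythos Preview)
Your proposal is correct, and in fact you supply more than the paper does: the paper gives no proof of this proposition at all, treating it purely as a record of known results with the citations placed in the statement header. Your sketches of the three mechanisms---the uniform-collar/induction argument for~\eqref{i: VV}, the Arzel\`a--Ascoli plus strong quasi-analyticity argument for~\eqref{i: ol HH}, and the orbit-closure partition argument for~\eqref{i: transitive <=> minimal}---accurately reflect the content of the cited sources, so there is nothing to correct.
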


In Proposition~\ref{p: list, HH}-\eqref{i: ol HH}, the pseudogroup $\ol\HH$ is called the {\em closure\/} of $\HH$.

\subsection{Relation of pseudogroups with local groups and local actions}\label{ss: local actions}

The general definition of local group is rather involved \cite{Jacoby1957}, but, in the locally compact case, a {\em local group\/} $G$ can be considered as a neighborhood of the identity element $e$ in some topological group \cite{DriesGoldbring2010,DriesGoldbring2012}; this description can be used as definition. Two such neighborhoods in the same topological group define {\em equivalent\/} local groups; thus it can be said that, up to equivalences, an equivalence class of local groups is the ``germ'' of a topological group at the identity element. For the sake of simplicity, the family of open neighborhoods of $e$ in $G$ will be denoted by $\NN(G,e)$. Given another local group $G'$ with identity element $e'$, a {\em local homomorphism\/} of $G$ to $G'$ is a partial map with open domain, $\sigma:G\rightarrowtail G'$, such that $e\in\dom\sigma$, $\sigma(e)=e'$, and $\sigma(gh)=\sigma(g)\sigma(h)$ for all $g,h\in\dom\sigma$ such that the products $gh$ and $\sigma(g)\sigma(h)$ are defined with $gh\in\dom\sigma$. Two local homomorphisms of $G$ to $G'$ are {\em equivalent\/} when they have the same germ at $e$. If there is a local homomorphism $\tau:G'\rightarrowtail G$ such that $\tau\sigma$ and $\sigma\tau$ are equivalent to $\id_G$ and $\id_{G'}$, then $\sigma$ is called a {\em local isomorphism\/}. 

The term {\em sublocal group\/} will be used for a subspace $H\subset G$ such that $(H\cap V)^2,(H\cap V)^{-1}\subset H$ for some $V\in\NN(G,e)$; in particular, $e\in H$, but $H\cap V$ is not required to be closed in $V$ (contrary to \cite[Definition~2.10.]{Goldbring2010}). A sublocal group becomes a local group with the induced structure, but it may not be locally compact, and the inclusion map of any sublocal group is a local homomorphism.

A {\em right local action\/} of $G$ on $T$ is a partial map with open domain, $\chi:T\times G\rightarrowtail T$, where $T\times\{e\}\subset\dom\chi$ and $\chi(u,e)=u$ for all $u\in T$, and such that, for all $g,h\in G$ and $u\in T$, if the product $gh$ is defined and $(u,g),(u,gh),(\chi(u,g),h)\in\dom\chi$, then $\chi(\chi(u,g),h)=\chi(u,gh)$. Two right local actions of $G$ on $T$ are {\em equivalent\/} when they agree around $T\times\{e\}$. If $T$ is compact, we can assume $\dom\chi=T\times O$ for some $O\in\NN(G,e)$. For any open $V\subset T$, the restriction $\chi:\chi^{-1}(V)\cap(V\times G)\to V$ is a right local action of $G$ on $V$, called the restriction of $\chi$ to $V$. Given an open cover $\{T_i\}$ of $T$ and a right local action $\chi_i$ of $G$ on every $T_i$ such that the restrictions of $\chi_i$ and $\chi_j$ to $T_i\cap T_j$ are equivalent, it is easy to check that there is a unique right local action of $G$ on $T$, up to equivalences, whose restriction to every $T_i$ is equivalent to $\chi_i$.

Consider another right local action $\chi'$ of $G'$ on $T'$. A partial map with open domain, $\phi:T\rightarrowtail T'$, is called {\em locally equivariant\/} if there is some open neighborhood $\Sigma$ of $\dom\phi\times\{e\}$ in $\dom\chi\cap(\phi\times\id_G)^{-1}(\dom\chi')$ such that $\chi(\Sigma)\subset\dom\phi$ and $\phi\chi(u,g)=\chi'(\phi(u),g)$ for all $(u,g)\in\Sigma$. Note that compositions, restrictions to open sets and unions of locally equivariant partial maps with open domain are locally equivariant, as well as their inverses whenever defined. A family of partial maps $T\rightarrowtail T'$ with open domain is called {\em locally equivariant\/} when all of its elements are locally equivariant.

{\em Local anti-homomorphisms\/}, {\em left local actions\/}, their {\em equivalences\/} and corresponding {\em locally equivariant\/} maps are similarly defined.

For instance, any finite dimensional metrizable locally compact local group is indeed locally isomorphic to the direct product of a Lie group and a compact zero-dimensional topological group \cite[Theorem~107]{Jacoby1957} (corrected according to \cite{Goldbring2010}, or using \cite{DriesGoldbring2010,DriesGoldbring2012} and \cite[Section~IV.4.9]{MontgomeryZippin1955}). As a concrete example, we can consider the product of any local Lie group and any countable family of finite groups. By Ado's theorem, the equivalence classes of local Lie groups and their local homomorphisms correspond one-to-one to finite dimensional real Lie algebras and their homomorphisms. If $G$ is a profinite group, $\Gamma$ is a dense subgroup of $G$, and $H$ is an open neighborhood of the identity in $G$, then $G$ and $H$ are locally isomorphic local groups, and $H$ and $\Gamma\cap H$ are sublocal groups of $G$ and $H$. A typical example of right local action of a local group $G$ on itself is given by its local right translations, and any local left translation of $G$ becomes locally equivariant.

\begin{prop}[{\cite[Theorems~3.3 and~5.2]{AlvarezCandel2010}, \cite[Lemma~2.36, Theorem~2.38 and Remark~21]{AlvarezMoreira2016}}]\label{p: list, G}
The following holds:
	\begin{enumerate}
		
		\item\label{i: G} Suppose that $\HH$ is minimal, compactly generated, equicontinuous and strongly quasi-analytic. Then $\ol\HH$ is strongly locally free if and only if $\HH$ is equivalent to a pseudogroup $\GG$ on some local group $G$ generated by the left local action by local left translations of a finitely generated dense sublocal group $\Gamma\subset G$. 
		
		\item\label{i: G >--> G'} Let $\GG$ and $\GG'$ be the pseudogroups on local groups $G$ and $G'$ generated by the left local actions by local left translations of respective finitely generated dense sublocal groups $\Gamma$ and $\Gamma'$. Let $\Phi:\GG\to\GG'$ be a morphism such that $\GG(e)\mapsto\GG'(e')$ by the induced map $G/\GG\to G/\GG'$. Then $\Phi$ is generated by a local homomorphism $G\rightarrowtail G'$ that restricts to a local homomorphism $\Gamma\rightarrowtail\Gamma'$.
		
	\end{enumerate}
\end{prop}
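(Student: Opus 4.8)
The plan is to handle the two parts in turn. In part~(i), the ``if'' direction is a direct computation, while the ``only if'' direction is essentially a restatement of the structure theory for equicontinuous pseudogroups (Theorems~3.3 and~5.2 of \cite{AlvarezCandel2010}, with the refinements in \cite[Lemma~2.36, Theorem~2.38 and Remark~21]{AlvarezMoreira2016}); part~(ii) is then a germ-level bookkeeping argument resting on the model produced by part~(i).

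For the ``if'' direction of~(i), suppose $\HH$ is equivalent to the pseudogroup $\GG$ on a local group $G$ generated by the local left translations $L_\gamma$, $\gamma\in\Gamma$, of a finitely generated dense sub-local group $\Gamma\subset G$. Since $\{\,L_\gamma\mid\gamma\in\Gamma\,\}$ is symmetric and closed under composites where defined, $\GG$ consists of unions of restrictions of the $L_\gamma$'s; taking compact-open closures of localizations and using that $\Gamma$ is dense, Proposition~\ref{p: list, HH}-\eqref{i: ol HH} shows that $\ol\GG$ is generated by the symmetric, composition-closed set $S=\{\,L_g|_O\mid g\in\NN(G,e),\ O\subset\dom L_g\text{ open}\,\}$. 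If $L_g$ fixes a point $x$, then $gx=x$ in the ambient topological group, so $g=e$ and $L_g=\id$; hence $\ol\GG$ is strongly locally free. Since equivalent pseudogroups have equivalent closures and strong local freeness is an equivalence invariant, $\ol\HH$ is strongly locally free as well.

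For the ``only if'' direction of~(i), I would invoke the completion construction. By Proposition~\ref{p: list, HH}-\eqref{i: ol HH}, $\ol\HH$ is a well-defined equicontinuous pseudogroup, and strong local freeness of $\ol\HH$ forces strong quasi-analyticity (an element that is the identity on an open subset of its domain has fixed points) and also forces the isotropy of the germ groupoid $\ol{\fH}$ at a chosen base point $u_0$ to be trivial: writing $\ol\HH$ as unions of restrictions of a symmetric, composition-closed generating set $S$, any germ at $u_0$ is the germ of some $h\in S$ with $h(u_0)=u_0$, hence $h=\id$. Also $\ol\HH$ is minimal, since its orbits contain the (dense) orbits of $\HH$ and, by equicontinuity, lie in their closures, which are minimal by Proposition~\ref{p: list, HH}-\eqref{i: transitive <=> minimal}. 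Triviality of the isotropy along the orbit $\ol\HH(u_0)$ makes the target map restrict to a bijection from the source fibre of $\ol{\fH}$ over $u_0$ onto $\ol\HH(u_0)$; transporting the partially defined composition of $\ol{\fH}$ through this bijection equips a neighborhood of $u_0$ in $T$ with a local group structure $G$ (with $u_0\leftrightarrow e$) in which the germs of elements of $\HH$ become local left translations, $\Gamma:=\HH(u_0)$ becomes a dense sub-local group, finitely generated because a system of compact generation of $\HH$ furnishes finitely many generators, and the equivalence $\HH\sim\GG$ follows by matching germs. The genuine content here --- that the completion of a minimal, compactly generated, equicontinuous, strongly quasi-analytic pseudogroup with strongly locally free closure carries such a local group model --- is precisely Theorems~3.3 and~5.2 of \cite{AlvarezCandel2010}, and this is the step I expect to be the main obstacle; I would not attempt to reprove it.

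For~(ii), pick $\phi\in\Phi$ with $e\in\dom\phi$. Since $\Phi$ carries $\GG(e)$ to $\GG'(e')$, $\phi(e)$ lies in $\GG'(e')=\Gamma'$, so post-composing with a suitable $L_{\gamma'}^{-1}\in\GG'\subset\Phi$ we may assume $\phi(e)=e'$. Now use that for pseudogroups of this special form an element near a point agrees with a local left translation $L_\gamma$ ($\gamma\in\Gamma$, resp.\ $\Gamma'$), the translating element being recovered by evaluation. Given $\gamma\in\Gamma$ close to $e$, we have $\phi L_\gamma\in\Phi$, and the morphism condition yields $h'\in\GG'$ with $\germ(h'\phi,e)=\germ(\phi L_\gamma,e)$; evaluating at $e$ gives $h'(e')=\phi(\gamma)\in\Gamma'$, so $h'=L_{\phi(\gamma)}$ near $e'$ and $\germ(\phi L_\gamma,e)=\germ(L_{\phi(\gamma)}\phi,e)$, whence, evaluating on points of $\Gamma$ near $e$, $\phi(\gamma_1\gamma_2)=\phi(\gamma_1)\phi(\gamma_2)$. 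By continuity and density of $\Gamma$, the restriction $\sigma$ of $\phi$ to a small neighborhood of $e$ is a local homomorphism $G\rightarrowtail G'$ carrying $\Gamma$ into $\Gamma'$. Finally $\sigma$ generates $\Phi$: its domain $\GG$-saturates to all of $G$ by minimality, and since $\Phi$ is already a morphism containing $\sigma$ (and $\sigma h\in\Phi$ for all $h\in\GG$), the uniqueness of the morphism generated by $\{\sigma\}$ identifies it with $\Phi$.
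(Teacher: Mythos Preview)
The paper gives no proof of this proposition; it is stated as a compilation of results from \cite{AlvarezCandel2010} and \cite{AlvarezMoreira2016}, so there is no ``paper's own proof'' to compare against. Your outline is a reasonable reconstruction: the ``if'' direction of~(i) is the expected direct computation, and you correctly identify the ``only if'' direction as the substantive structure theorem that must be quoted from the cited references.

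For part~(ii) there is a genuine, if minor, gap. From the morphism condition you obtain, for each $\gamma\in\Gamma$ near $e$, an identity of germs $\germ(\phi L_\gamma,e)=\germ(L_{\phi(\gamma)}\phi,e)$; this yields $\phi(\gamma g)=\phi(\gamma)\phi(g)$ only for $g$ in some neighbourhood $V_\gamma$ of $e$ that a priori depends on $\gamma$. Your ``continuity and density'' step then needs a \emph{uniform} neighbourhood on which the multiplicativity holds for all $\gamma$ simultaneously, and this does not follow automatically (for instance when $G$ is totally disconnected, as in the matchbox case). One fix is to apply the morphism condition at every base point $u$ near $e$, not only at $e$: this shows that $g\mapsto\phi(\gamma g)\phi(g)^{-1}$ is locally constant with values in $\Gamma'$, hence constant on the connected component of $e$; when $G$ is locally connected this gives the required uniformity. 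In general one must argue via the closures, extending $\Phi$ to $\ol\Phi:\ol\GG\to\ol\GG'$ and then running your argument with $L_g$ for $g\in G$ rather than just $\gamma\in\Gamma$; this is the route taken in \cite[Theorem~2.38]{AlvarezMoreira2016}. Your final claim that $\sigma$ generates $\Phi$ is correct once $\sigma$ is known to be a local homomorphism.
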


\begin{prop}\label{p: chi'}
	Let $\Phi:\HH\to\HH'$ be an equivalence between compactly generated pseudogroups. Let $\chi$ be a right local action of $G$ on $T$ such that $\HH$ is locally equivariant. Then there is a unique right local action $\chi'$ of $G$ on $T'$, up to equivalences, such that $\Phi$ and $\HH'$ are locally equivariant.
\end{prop}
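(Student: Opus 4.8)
The plan is to transport $\chi$ along a generating family of $\Phi$, glue the local pieces, and then verify the two equivariance conditions and uniqueness. First I would fix a family $\Phi_0$ of open embeddings of open subsets of $T$ onto open subsets of $T'$ generating $\Phi$, with $\Phi_0^{-1}$ generating $\Phi^{-1}$ (such a family exists because $\Phi$ is an equivalence). Replacing $\Phi_0$ by $\HH'\Phi_0\HH$ and closing under restrictions to open sets does not change the generated morphism and gives $\bigcup_{\phi\in\Phi_0}\dom\phi=T$ and $\bigcup_{\phi\in\Phi_0}\im\phi=T'$ (using that $\HH$- and $\HH'$-saturations of these unions are $T$ and $T'$); compact generation of $\HH$ and $\HH'$ lets one take $\Phi_0$ finite, which makes the various ``near $e$'' neighborhoods below uniform, though it is not essential. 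For each $\phi\in\Phi_0$, the restriction $\chi|_{\dom\phi}$ is a right local action of $G$ on $\dom\phi$, and transporting it through the homeomorphism $\phi:\dom\phi\to\im\phi$ produces a right local action $\chi'_\phi$ of $G$ on $\im\phi$, determined by $\phi\chi(u,g)=\chi'_\phi(\phi(u),g)$ for $(u,g)$ near $\dom\phi\times\{e\}$. In particular $\phi$ is locally equivariant for $(\chi|_{\dom\phi},\chi'_\phi)$ and $\phi^{-1}$ for $(\chi'_\phi,\chi|_{\dom\phi})$, essentially by construction.

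The heart of the argument is to show that $\chi'_\phi$ and $\chi'_\psi$ agree near $(\im\phi\cap\im\psi)\times\{e\}$ for all $\phi,\psi\in\Phi_0$. Fix $u'\in\im\phi\cap\im\psi$ and set $w=\phi^{-1}(u')$, $v=\psi^{-1}(u')$. Since $\Phi$ is an equivalence, $\psi^{-1}\phi\in\Phi^{-1}\Phi\subset\HH$, so there is $h\in\HH$ with $h=\psi^{-1}\phi$ (hence $\phi=\psi h$) on a neighborhood of $w$ and $h(w)=v$. By hypothesis $h$ is locally equivariant for $\chi$; since $\chi(w,g)$ stays near $w$ for $g$ near $e$ (where $\phi=\psi h$), this yields
\[
	\chi'_\phi(u',g)=\phi\chi(w,g)=(\psi h)\chi(w,g)=\psi\bigl(h\chi(w,g)\bigr)=\psi\chi\bigl(h(w),g\bigr)=\psi\chi(v,g)=\chi'_\psi(u',g)\;.
\]
As $u'\in\im\phi\cap\im\psi$ was arbitrary, the restrictions of $\chi'_\phi$ and $\chi'_\psi$ to $\im\phi\cap\im\psi$ are equivalent. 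By the gluing property of right local actions recalled in Subsection~\ref{ss: local actions}, there is a right local action $\chi'$ of $G$ on $T'$, unique up to equivalence, whose restriction to each $\im\phi$ is equivalent to $\chi'_\phi$.

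It then remains to check that $\Phi$ and $\HH'$ are locally equivariant for $(\chi,\chi')$ and that $\chi'$ is the only such action. Each $\phi\in\Phi_0$ is locally equivariant for $(\chi,\chi')$ since it is so for $(\chi|_{\dom\phi},\chi'_\phi)$ and $\chi'|_{\im\phi}$ is equivalent to $\chi'_\phi$; likewise each $\phi^{-1}\in\Phi_0^{-1}$ is locally equivariant for $(\chi',\chi)$. For $\HH'$: given $h'\in\HH'$ and $u_0'\in\dom h'$, choose $\phi,\psi\in\Phi_0$ with $u_0'\in\im\phi$ and $h'(u_0')\in\im\psi$; then $\psi^{-1}h'\phi\in\Phi^{-1}\HH'\Phi_0\subset\Phi^{-1}\Phi\subset\HH$ near $\phi^{-1}(u_0')$, say it equals $k\in\HH$ there, so $h'=\psi k\phi^{-1}$ near $u_0'$, and $h'$ is locally equivariant near $u_0'$ as a composite of the locally equivariant maps $\phi^{-1}$, $k$, $\psi$; since $u_0'$ was arbitrary, $\HH'$ is locally equivariant. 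Since composites, restrictions and unions of locally equivariant maps are again locally equivariant and $\Phi$ is generated by $\Phi_0$ over $\HH$ and $\HH'$, the whole morphism $\Phi$ is locally equivariant. Finally, if $\chi''$ is another right local action of $G$ on $T'$ for which $\Phi$ and $\HH'$ are locally equivariant, then each $\phi\in\Phi_0$ is locally equivariant for $(\chi,\chi'')$, which forces $\chi''|_{\im\phi}$ to be equivalent to $\chi'_\phi$, hence to $\chi'|_{\im\phi}$; as $\{\im\phi\}_{\phi\in\Phi_0}$ covers $T'$, the uniqueness clause of the gluing property gives $\chi''$ equivalent to $\chi'$.

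I expect the main obstacle to be the compatibility computation in the second paragraph: everything rests on the transition maps $\psi^{-1}\phi$ lying in $\HH$ and therefore being locally equivariant, and on organizing the nested ``near $e$'' and ``near the point'' neighborhoods so that all the identities hold simultaneously on a common domain — which is exactly why having $\Phi_0$ finite (via compact generation) is convenient and why $\chi'$ is only obtained up to equivalence.
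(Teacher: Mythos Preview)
Your argument is correct and follows the same underlying idea as the paper --- transport $\chi$ through the open embeddings generating $\Phi$, check compatibility on overlaps via $\Phi^{-1}\Phi\subset\HH$, and glue --- but the execution differs. You invoke the abstract gluing statement for right local actions from Subsection~\ref{ss: local actions} as a black box, whereas the paper carries out the gluing by hand: it fixes a system of compact generation $E$ of $\HH$ on a relatively compact open $U$, chooses a family $\Phi_0\subset\Phi$ whose members admit extensions $\tilde\phi\in\Phi$ with $\ol{\dom\phi}\subset\dom\tilde\phi$, sets $\{\phi_i\}=\{\phi h\mid h\in E,\ \phi\in\Phi_0\}$, and then explicitly builds an open domain $\Omega'\subset T'\times G$ on which $\chi'(u',g)=\tilde\phi_i\chi(\phi_i^{-1}(u'),g)$ is well defined and continuous. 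The extensions $\tilde\phi_i$ are what allow the paper to verify openness of the domain and well-definedness at boundary points of the $U'_i$, which is where compact generation is genuinely used. Your route sidesteps that bookkeeping by quoting the gluing property, and in fact does not really need compact generation (your remark that the finiteness of $\Phi_0$ ``is not essential'' is right; your claim that compact generation alone yields a finite $\Phi_0$ is a bit quick, but harmless since you do not use it). The paper's approach buys an explicit formula for $\chi'$ on a concrete domain; yours buys a shorter argument and makes the role of the hypothesis $\Phi^{-1}\Phi\subset\HH$ (hence local equivariance of the transition maps) more transparent. Your treatment of uniqueness and of the local equivariance of $\HH'$ via $h'=\psi k\phi^{-1}$ is also cleaner than what the paper writes out.
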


\begin{proof}
	For $x\in T'$, $\phi\in \Phi$ with $x\in \im(\Phi)$, and $g$ sufficiently close to $e$ in $G$, our tentative right action $\chi'$ will satisfy $\chi'(x,g)= \phi(\chi(\phi^{-1}(x),g))$. The proof is then a matter of verifying that this expression determines a well-defined local right action satisfying the hypothesis. We omit the details for brevity.
\end{proof}

Let $\chi$ be a right local action of $G$ on $T$ such that $\HH$ is locally equivariant. Consider the following property that $(T,\HH,\chi)$ may have:
	\begin{equation}\label{HH(chi(u times P)) = T}
		\HH(\chi(\{u\}\times P))=T\quad\forall u\in T, \forall P\in\NN(G,e)\mid\{u\}\times P\subset\dom\chi.
	\end{equation}

\begin{lem}\label{l: HH(chi(u times P)) = T}
	Property~\eqref{HH(chi(u times P)) = T} is preserved by locally equivariant pseudogroup equivalences.
\end{lem}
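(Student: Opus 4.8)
The plan is to check the defining condition for the target $(T',\HH',\chi')$ directly. Let $\Phi\colon\HH\to\HH'$ be the given equivalence, so that $\Phi$, $\HH$ and $\HH'$ are locally equivariant, and assume $(T,\HH,\chi)$ satisfies~\eqref{HH(chi(u times P)) = T}. Since $\Phi$ is an equivalence, $T'=\bigcup_{\phi\in\Phi}\im\phi$. Fix $u'\in T'$ and $P'\in\NN(G,e)$ with $\{u'\}\times P'\subset\dom\chi'$; the goal is $\HH'(\chi'(\{u'\}\times P'))=T'$. First I would pick $\phi\in\Phi$ with $u'\in\im\phi$ and set $u=\phi^{-1}(u')$. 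Local equivariance of $\phi$ provides an open neighbourhood $\Sigma$ of $\dom\phi\times\{e\}$ in $\dom\chi\cap(\phi\times\id_G)^{-1}(\dom\chi')$ with $\chi(\Sigma)\subset\dom\phi$ and $\phi\,\chi(v,g)=\chi'(\phi(v),g)$ for all $(v,g)\in\Sigma$; since $(u,e)\in\Sigma$, choose $P\in\NN(G,e)$ with $P\subset P'$ and $\{u\}\times P\subset\Sigma$. Then $\{u\}\times P\subset\dom\chi$, the set $A:=\chi(\{u\}\times P)$ is contained in $\dom\phi$, and $\phi(A)=\chi'(\{u'\}\times P)\subset\chi'(\{u'\}\times P')$. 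By~\eqref{HH(chi(u times P)) = T} applied in $T$ we get $\HH(A)=T$, so it suffices to prove $\HH'(\phi(A))=T'$.

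This last step is the heart of the matter, and it uses only that $\Phi$ is an equivalence. From the \'etal\'e morphism axioms we have $\HH'\Phi\HH\subset\Phi$ and $\Phi\Phi^{-1}\subset\HH'$. Given $w'\in T'$, pick $\psi\in\Phi$ with $w'\in\im\psi$, put $w=\psi^{-1}(w')\in\dom\psi\subset T$, and use $\HH(A)=T$ to write $w=h(a)$ with $h\in\HH$ and $a\in A\cap\dom h$. Then $\psi h=\id_{T'}\,\psi\,h\in\HH'\Phi\HH\subset\Phi$; moreover $a\in\dom\phi\cap\dom(\psi h)$, since $a\in\dom\phi$ and $h(a)=w\in\dom\psi$, so $\phi(a)$ lies in the domain of $(\psi h)\phi^{-1}\in\Phi\Phi^{-1}\subset\HH'$. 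As $(\psi h)\phi^{-1}$ sends $\phi(a)$ to $(\psi h)(a)=\psi(w)=w'$ and $\phi(a)\in\phi(A)$, we conclude $w'\in\HH'(\phi(A))$. Since $w'$ was arbitrary, $T'\subset\HH'(\phi(A))\subset\HH'(\chi'(\{u'\}\times P'))$, and the reverse inclusion is trivial.

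The only delicate part I anticipate is the domain bookkeeping in the second paragraph: one has to track domains carefully so that $\psi h$ and $(\psi h)\phi^{-1}$ are genuinely defined (as homeomorphisms of open neighbourhoods of the relevant points) before invoking $\HH'\Phi\HH\subset\Phi$ and $\Phi\Phi^{-1}\subset\HH'$; everything else is formal. Note that no compact generation of $\HH$ or $\HH'$ is needed, and that, $\Phi^{-1}$ being a locally equivariant equivalence as well, the same argument applies in the other direction, so~\eqref{HH(chi(u times P)) = T} is in fact an invariant of locally equivariant equivalence.
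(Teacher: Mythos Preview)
Your proof is correct and is precisely the kind of elementary verification the paper has in mind; the paper's own proof consists of the single word ``Elementary.'' You have supplied the routine details: transport a neighbourhood $P'$ back through a chart $\phi\in\Phi$ using local equivariance, invoke the hypothesis on $(T,\HH,\chi)$, and then push saturations forward via $\HH'\Phi\HH\subset\Phi$ and $\Phi\Phi^{-1}\subset\HH'$, with the domain bookkeeping you flag handled correctly.
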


\begin{proof}
	Elementary.
\end{proof}

\subsection{Foliated spaces}\label{ss: foliated sps}

The notation introduced here will be used in Sections~\ref{s: Molino}--\ref{s: C^infty G-fol sp => C^infty foliated homogeneous}.

Let $X$ be a space and $n\in\Z^{\ge0}$. The main results of the paper will require $X$ to be compact, but this condition is avoided for the basic concepts. Let $\UU$ be a family consisting of pairs $(U_i,\xi_i)$, called {\em foliated charts\/}, where $\{U_i\}$ is an open cover of $X$, and every $\xi_i$ is a homeomorphism $U_i\to B_i\times T_i$ for some contractible open subset $B_i\subset\R^n$ and a space $T_i$. Every $(U_i,\xi_i)$ induces a projection $p_i:U_i\to T_i$ whose fibers are called {\em plaques\/}. Assume that finite intersections of plaques are open in the plaques. Then the open subsets of the plaques form a base of a finer topology in $X$, becoming an $n$-manifold whose connected components are called {\em leaves\/}. In this case, it is said that $\UU$ defines a {\em foliated structure\/} $\FF$ of {\em dimension\/} $n$ on $X$, $X\equiv(X,\FF)$ is called a {\em foliated space\/} (or {\em lamination\/}), and $\UU$ is called a {\em foliated atlas\/}. Two foliated atlases define the same foliated structure if their union is a foliated atlas. The subspaces $\xi_i^{-1}(\{\bv\}\times T_i)\subset X$, $\bv\in B_i$, are called {\em local transversals\/} defined by the foliated chart $(U_i,\xi_i)$. A {\em transversal\/} is a subspace $\Sigma\subset X$ where any point has a neighborhood that is a local transversal of some foliated chart. A transversal is called {\em global\/} if it meets all leaves.

A foliated space can be considered as a weak version of a regular dynamical system where the leaves play the role of the orbits. In this way, several basic dynamical concepts have obvious versions for foliated spaces, like {\em saturation\/}, ({\em topological\/}) {\em transitivity\/} and {\em minimality\/}. The partition of $X$ into leaves is enough to describe $\FF$. The leaf through a point $x$ may be denoted by $L_x$, and the leaf space by $X/\FF$. The saturation of a subset $A\subset X$ is denoted by $\FF(A)$.

We can assume that the foliated atlas $\UU$ is {\em regular\/}\footnote{Regularity of the foliated atlas is used with another meaning in \cite{ClarkHurder2013}.} in the sense that it satisfies the following properties \cite[Definition~5.1]{AlvarezCandel2018} (see also \cite{HectorHirsch1986-A,CandelConlon2000-I,Godbillon1991}):
	\begin{itemize}
 
		\item there is another foliated atlas $\widetilde\UU=\{\widetilde U_i,\tilde \xi_i\}$ of $X$, with $\tilde\xi_i:\widetilde U_i\to\widetilde B_i\times\widetilde T_i$ and distinguished submersions $\tilde p_i:U_i\to\widetilde T_i$, such that $\ol{U_i}\subset\widetilde U_i$, $\ol{B_i}\subset\widetilde B_i$, $T_i$ is an open subspace of $\widetilde T_i$, and $\xi_i=\tilde\xi_i|_{U_i}$ (thus $p_i=\tilde p_i|_{U_i}$);

		\item $\{U_i\}$ is locally finite; and

		\item every plaque of $(U_i, \xi_i)$ meets at most one plaque of $(U_j, \xi_j)$.

	\end{itemize}
By the last condition, there are homeomorphisms\footnote{This convention for the order of these subindices agrees with \cite{ClarkHurder2013} and differs from \cite{AlvarezMoreira2016}. The same kind of convention will be used in the local representations of foliated maps.} $h_{ij}:p_j(U_i\cap U_j)\to p_i(U_i\cap U_j)$, the {\em elementary holonomy transformations\/}, such that $h_{ij}p_j=p_i$ on $U_i\cap U_j$, obtaining the {\em defining cocycle\/} $\{U_i,p_i,h_{ij}\}$; it describes $\FF$ and satisfies the cocycle condition $h_{ik}=h_{ij}h_{jk}$ on $p_k(U_i\cap U_j\cap U_k)$. So the changes of coordinates $\xi_i\xi_j^{-1}:\xi_j(U_i\cap U_j)\to\xi_i(U_i\cap U_j)$ are of the form
	\begin{equation}\label{xi_i xi_j^-1}
		\xi_i\xi_j^{-1}(\bv,u)=(g_{ij}(\bv,u),h_{ij}(u))\;,
	\end{equation}
for some maps $g_{ij}:\xi_j(U_i\cap U_j)\to B_i$. 

The ``transverse dynamics'' of $X$ are described by its {\em holonomy pseudogroup\/}, which is (the equivalence class of) the pseudogroup $\HH$ generated by the maps $h_{ij}$ on $T:=\bigsqcup_iT_i$. Its elements are called {\em holonomy transformations\/}. There is a canonical identity $X/\FF\equiv T/\HH$, where the $\HH$-orbit that corresponds to a leaf $L$ is $\bigcup_ip_i(L\cap U_i)$. Via this identity, $\FF$-leaves and $\HH$-orbits have corresponding dynamical concepts.

We can assume that $\widetilde\UU$ is also regular, obtaining elementary holonomy transformations $\tilde h_{ij}:\tilde p_j(\widetilde U_i\cap\widetilde U_j)\to\tilde p_i(\widetilde U_i\cap\widetilde U_j)$, extending the maps $h_{ij}$, which generate another representative of the holonomy pseudogroup, $\widetilde\HH$ on $\widetilde T:=\bigsqcup_i\widetilde T_i$; $T$ is an open subspace of $\widetilde T$ that meets all $\widetilde\HH$-orbits, and $\HH=\widetilde\HH|_T$. Let $\sigma_i:T_i\to U_i$ and $\tilde\sigma_i:\widetilde T_i\to\widetilde U_i$ be the sections of every $p_i$ and $\tilde p_i$ defined by fixing an element of $B_i$ (thus $\sigma_i=\tilde\sigma_i|_{T_i}$). We can assume that the sets $\tilde\sigma_i(\widetilde T_i)$ are separated by open sets in $X$, and therefore $\bigcup_i\tilde\sigma_i:\widetilde T\to\bigcup_i\tilde\sigma_i(\widetilde T_i)$ and $\bigcup_i\sigma_i:T\to\bigcup_i\sigma_i(T_i)$ are homeomorphisms to complete transversals.

Given a finite sequence of indices, $\II=(i_0,\dots,i_\alpha)$, let $h_\II=h_{i_\alpha i_{\alpha-1}}\cdots h_{i_1i_0}$ if $\alpha>0$, and $h_\II=\id_{T_{i_0}}$ if $\alpha=0$. If $\dom h_\II\ne\emptyset$, then $\II$ is called {\em admissible\/}. Let $c:I:=[0,1]\to X$ be a path from $x$ to $y$, which is {\em leafwise\/} in the sense that $c(I)$ is contained in some leaf $L$. Let us say that $c$ is ($\UU$-) {\em covered\/} by $\II$ if there is a partition of $I$, $0=t_0<t_1<\dots<t_{\alpha+1}=1$, such that $c([t_k,t_{k+1}])\subset U_{i_k}$ for all $k=0,\dots,\alpha$. In this case, $u:=p_{i_0}(x)\in\dom h_\II$ and $h_\II(u)=p_{i_\alpha}(y)$. If $\II=(i_0,\dots,i_\alpha)$ and $\JJ=(j_0,\dots,j_\beta)$ cover $c$ and $c'$, respectively, with $j_0=i_\alpha$, then $\II\JJ:=(i_0,\dots,i_\alpha=j_0,\dots,j_\beta)$ and $\II^{-1}:=(i_\alpha,\dots,i_0)$ cover $cc'$ and $c^{-1}$, respectively, and we have $h_{\II\JJ}=h_\JJ h_\II$ and $h_{\II^{-1}}=h_\II^{-1}$. By using $\widetilde\UU$, we can similarly define $\tilde h_\II$, which is an extension of $h_\II$. Recall that, for another admissible sequence $\JJ=(j_0,\dots,j_\beta)$ with $j_0=i_0$ and $j_\beta=i_\alpha$, covering another path $c'$ from $x$ to $y$ in $L$, if $c$ and $c'$ are endpoint-homotopic in $L$, then $u\in\dom h_\JJ$ and $\germ(h_\II,u)=\germ(h_\JJ,u)$. Any leafwise path is covered by some admissible sequence, and, vice versa, for all $\II=(i_0,\dots,i_\alpha)$, $x\in U_{i_0}$ and $y\in U_{i_\alpha}$ with $p_{i_0}(x)\in\dom h_\II$ and $h_\II p_{i_0}(x)=p_{i_\alpha}(y)$, there is some leafwise path from $x$ to $y$ covered by $\II$.

The {\em holonomy group\/} $\Hol(L,x)$ of a leaf $L$ at a point $x\in L\cap U_i$ is the germ group of $\HH$ at $u=p_i(x)$. It depends only on $L$ up to conjugation by germs of holonomy transformations. The {\em holonomy homomorphism\/}, $\hol:\pi_1(L,x)\to\Hol(L,x)$, is given by $\hol([c])=\germ(h_\II^{-1},u)$ if $c$ is covered by $\II=(i_0,\dots,i_\alpha)$ with $i_0=i_\alpha=i$. This epimorphism induces a regular covering $\widetilde L^{\text{\rm hol}}$ of $L$, the {\em holonomy covering\/}. We will consider the canonical right action of $\Hol(L,x)$ on $\widetilde L^{\text{\rm hol}}$ by covering transformations. A leaf is said to be {\em without holonomy\/} if its holonomy group is trivial, and $X$ is called {\em without holonomy\/} when all leaves have no holonomy. The union of leaves without holonomy is a dense $G_\delta$ in $X$, and therefore Borel and residual \cite{Hector1977a,EpsteinMillettTischler1977}. A path connected subset of a leaf, $D\subset L$, is said to be {\em without holonomy\/} if the composition
	\[
		\begin{CD}
			\pi_1(D,x) @>>> \pi_1(L,x)@>{\hol}>>\Hol(L,x)
		\end{CD}
	\]
is trivial for some (and therefore all) $x\in D$.

It is said that $X$ is ({\em strongly\/}) {\em equicontinuous\/}, {\em strongly quasi-analytic\/} or {\em strongly locally free\/} if $\HH$ satisfies these properties; thus $X$ is strongly locally free just when it is strongly quasi-analytic and has no holonomy. In the definition of these conditions for $\HH$, by refining $\UU$ if necessary, we can assume that the metrics $d_i$ are defined on the sets $T_i$, and we can take
	\[
		S=\{\,h_\II\mid\text{$\II$ is an admissible sequence}\,\}
	\]
if desired. 

For a local group $G$, we say that $X$ is a {\em $G$-foliated space\/} if $\HH$ is equivalent to a pseudogroup generated by some local left translations on $G$. More precisely, this means that $X$ has a regular foliated atlas $\UU$, with induced distinguished submersions $p_i:U_i\to T_i$ and elementary holonomy transformations $h_{ij}:p_j(U_i\cap U_j)\to p_i(U_i\cap U_j)$, such that every $T_i$ is an open subspace of $G$, and every $h_{ij}$ is the restriction of some local left translation of $G$.
	
If $X$ is compact, then $\UU$ is finite and $T$ is relatively compact in $\widetilde T$, obtaining that $\widetilde\HH$ satisfies the definition of compact generation with the generators $h_{ij}$ of $\widetilde\HH|_T=\HH$ and their extensions $\tilde h_{ij}$. So $\HH$ is also compactly generated. If moreover $\FF$ is equicontinuous, then the properties of Propositions~\ref{p: list, HH} and~\ref{p: list, G} apply to $\HH$; in particular, the leaf closures are minimal sets, and therefore $X$ is transitive if and only if it is minimal.

{\em Foliated spaces with boundary\/} can be defined in a similar way, adapting the definition of manifold with boundary: every $B_i$ would be a contractible open set in the half space $H^n\equiv\R^{n-1}\times[0,\infty)$. The {\em boundary\/} of $X$, $\partial X=\bigcup_i\xi_i^{-1}(\partial B_i\times T_i)$, becomes a foliated space without boundary. The basic concepts recalled here about foliated spaces have direct extensions to foliated spaces with boundary.

Any open $U\subset X$ becomes a foliated space with the {\em restriction\/} $\FF|_U$, defined by all possible foliated charts of $\FF$ with domain in $U$. Any connected (second countable) manifold can be considered a foliated space with one leaf, and any space can be considered as a foliated space whose leaves are the points. Like in the case of foliations, a typical example of foliated space can be obtained by {\em suspension\/} of an action of the fundamental group of a manifold on a space (see Section~\ref{ss: suspensions}). Other interesting examples of foliated spaces are {\em weak solenoids\/}, defined as inverse limits of towers of covering maps between closed manifolds. More general foliated spaces are given by inverse limits of towers of foliated covering maps between closed foliated manifolds (Section~\ref{ss: inverse limits of Lie foliations}).

Let $X'\equiv(X',\FF')$ be another foliated space, let $\UU'=\{U'_a,\xi'_a\}$ be a regular foliated atlas of $X'$, where $\xi'_a:U'_a\to B'_a\times T'_a$, giving rise to a defining cocycle $\{U'_a,p'_a,h'_{ab}\}$, and the corresponding representative of the holonomy pseudogroup, $\HH'$ on $T':=\bigsqcup_aT_a$ generated by $\{h'_{ab}\}$. A map $\phi:X\to X'$ is called {\em foliated\/} when it maps leaves to leaves. Then every local representation $\xi'_a\phi\xi_i^{-1}:\xi_i(U_i\cap\phi^{-1}(U'_a))\to B'_a\times T'_a$ is of the form
	\begin{equation}\label{xi'_a phi xi_i^-1}
		\xi'_a\phi\xi_i^{-1}(\bv,u)=(\phi^1_{ai}(\bv,u),\phi^2_{ai}(u))
	\end{equation}
for some maps $\phi^1_{ai}:\xi_i(U_i\cap\phi^{-1}(U'_a))\to B'_a$ and $\phi^2_{ai}:p_i(U_i\cap\phi^{-1}(U'_a))\to T'_a$. The maps $\phi^2_{ai}$ generate a morphism $\Phi:\HH\to\HH'$ \cite{AlvarezMasa2006,AlvarezMasa2008}, which is said to be {\em induced\/} by $\phi$.

 An action of a group on $X$ is called {\em foliated\/} when it is given by foliated homeomorphisms. A homotopy $H$ between foliated maps $\phi,\psi:X\to X'$ is said to be {\em leafwise\/} if it is a foliated map $X\times I\to X'$, where $X\times I$ is endowed with the foliated structure with leaves $L\times I$, for leaves $L$ of $X$; in particular, every path $H(x,\cdot):I\to X'$ ($x\in X$) is leafwise. In this case, $\phi$ and $\psi$ induce the same morphism $\HH\to\HH'$ \cite[Proposition~6.1]{AlvarezMasa2008}. A {\em leafwise\/} isotopy has a similar definition.

Let $V\subset\R^n\times Y$ be an open subset, and let $r\in\Z^{\ge0}\cup\{\infty\}$. A map $g:V\to\R^{n'}$ is called ({\em differentiable of class\/}) $C^r$ when, for any integer $0\le k\le r$ (it is enough to take $k=r$ if $r<\infty$), all partial derivatives of $g$ up to order $k$ with respect to the coordinates of $\R^n$ are defined and continuous on $V$. A change of coordinates $\xi_j\xi_i^{-1}$ is called $C^r$ when the map $g_{ij}$ in~\eqref{xi_i xi_j^-1} is $C^r$. If all changes of coordinates are $C^r$, then $\UU$ defines a {\em $C^r$ structure\/} on $X$, which becomes a {\em $C^r$ foliated space\/}. In this case, $\UU$ and its foliated charts are called $C^r$. Two such foliated atlases of $X$ define the same $C^r$ structure if their union also defines a $C^r$ structure. The leaves of $C^r$ foliated spaces canonically become $C^r$ manifolds. Many concepts of $C^r$ manifolds have straightforward generalizations to $C^r$ foliated spaces, like {\em $C^r$ foliated maps\/}, {\em $C^r$ foliated diffeomorphisms\/}, {\em $C^r$ foliated embeddings\/}, {\em $C^r$ foliated actions\/}, {\em $C^r$ leafwise homotopies/diffeotopies\/}, {\em $C^r$ vector bundles\/}, {\em $C^r$ sections\/}, the ({\em leafwise\/}) {\em tangent bundle\/} $TX$ (or $T\FF$), the ({\em leafwise\/}) {\em tangent map\/} $T\phi:TX\to TX'$ of a $C^r$ foliated map $\phi:X\to X'$, ({\em leafwise\/}) {\em Riemannian metrics\/}, etc. For instance, a foliated map $\phi:X\to X'$ is $C^r$ when, for all local representations $\xi'_a\phi\xi_i^{-1}$, the maps $\phi^1_{ai}$ of~\eqref{xi'_a phi xi_i^-1} are $C^r$.

Any $C^r$ foliated space has a $C^r$ partition of unity subordinated to any open cover \cite[Proposition~2.8]{MooreSchochet1988}. A version of the Reeb's stability theorem holds for $C^2$ foliated spaces \cite[Proposition~1.7]{AlvarezCandel2009}.

Recall that a subset $A$ in a Riemannian manifold $M$ is
  called \emph{convex} when, for all $x,y\in A$, there is a unique 
  minimizing geodesic segment from $x$ to $y$ in $M$ that 
  lies in $A$ (see e.g.\ \cite[Section~IX.6]{Chavel2006}). 
  For example, sufficiently small balls are convex.
If  $X$ is $C^\infty$, given any $C^\infty$ Riemannian metric on $X$, we can choose $\UU$ and $\widetilde\UU$ so that the plaques of their charts are convex balls in the leaves. This follows from the relation between the convexity and injectivity radii \cite[Theorem~IX.6.1]{Chavel2006}, and the continuity of the injectivity radius on closed manifolds \cite{Ehrlich1974,Sakai1983}---the case of closed manifolds easily yields local lower bounds of the injectivity radius on arbitrary manifolds, valid for all metrics that are close enough to a given metric in the weak $C^\infty$ topology.

\subsection{Spaces of foliated maps}\label{ss: spaces of foliated maps}

Suppose that $X$ and $X'$ are $C^r$ for some $r\in\Z^{\ge0}\cup\{\infty\}$. We use the following notation\footnote{The foliated structures are added to this notation to avoid ambiguity.} for sets of maps $X\to X'$: 
	\begin{itemize}
	
		\item $C^r(X,\FF;X',\FF')$ is the set of $C^r$ foliated maps.
		
		\item $\Diffeo^r(X,\FF;X',\FF')$ (or $\Diffeo^r(X,\FF)$ if $X=X'$) is the set of $C^r$ foliated diffeomorphisms.
		
		\item $\Emb^r(X,\FF;X',\FF')$ is the set of $C^r$ foliated embeddings.
		
		\item $\Prop^r(X,\FF;X',\FF')$ is the set of proper $C^r$ foliated maps. 
		
		\item $\Homeo(X,\FF;X',\FF')$ (or $\Homeo(X,\FF)$ if $X=X'$) is the set of foliated homeomorphisms.
		
	\end{itemize}
If $r=0$ or it is clear that $r=\infty$, then $r$ is removed from the above notation. $\Homeo(X,\FF)$ is a subgroup of the group of homeomorphisms, $\Homeo(X)$.

Let us define two foliated versions of the weak/strong $C^r$ topology. In the first version, consider any $\phi\in C^r(X,\FF;X',\FF')$, locally finite families of foliated charts, $\UU=\{U_i,\xi_i\}$ of $X$ and $\UU'=\{U'_a,\xi'_a\}$ of $X'$, a family of compact subsets of $X$, $\KK=\{K_i\}$, so that $K_i\subset U_i$ and $f(K_i)\subset U'_{a_i}$ for all $i$ and corresponding indices $a_i$, a family $\EE=\{\epsilon_i\}$ of positive numbers, and any integer $0\le k\le r$ (it is enough to take $k=r$ if $r<\infty$). Then let $\NN_{\text{\rm F}}^k(\phi,\UU,\UU',\KK,\EE)$ be the set of foliated maps $\psi:X\to X'$ such that $\psi(K_i)\subset U'_{a_i}$ and
	\[
		\left|\frac{\partial^\alpha(\phi^1_{a_ii}-\psi^1_{a_ii})}{\partial\bv^\alpha}(\bv,u)\right|<\epsilon_i\;,
	\]
for all $i$, $(\bv,u)\in\xi_i(K_i)$ and multi-indices $\alpha$ with $|\alpha|\le k$, where $\phi^1_{a_ii}$ and $\psi^1_{a_ii}$ are given by~\eqref{xi'_a phi xi_i^-1}. All possible sets $\NN_{\text{\rm F}}^k(\phi,\UU,\UU',\KK,\EE)$ form a base of open sets in a topology on $C^r(X,\FF;X',\FF')$, called the {\em strong foliated $C^r$ topology\/}. The {\em weak foliated $C^r$ topology\/} is similarly defined by using finite families of indices $i$. The subindex ``WF/SF'' will be added to the notation to indicate that the weak/strong foliated $C^r$ topology in a family of $C^r$ foliated maps. Note that $C_{\text{\rm WF}}(X,\FF;X',\FF')$ has the compact-open topology. Of course both topologies coincide when $X$ is compact, and only the subindex ``F'' will be added in this case. 

If $X$ is compact, then the group of homeomorphisms, $\Homeo(X)$, is a Polish topological group with the compact-open topology \cite[Theorem~3]{Arens1946}. Moreover $\Homeo(X,\FF)$ is a closed subgroup of $\Homeo(X)$, and therefore it is also a Polish topological group.

Some important results on spaces of $C^r$  maps between manifolds have straightforward generalizations to $C^r$ foliated spaces, like the following.

\begin{prop}\label{p: list, SF}
	The following properties hold:
		\begin{enumerate}	
		
			\item\label{i: T_x phi injective/surjective for all x} The injectivity/surjectivity of the restrictions of the tangent map to the fibers defines an open subset of $C^r_{\text{\rm SF}}(X,\FF;X',\FF')$ for $1\le r\le\infty$.
			
			\item\label{i: proper} $\Prop^r(X,\FF;X',\FF')$ is open in $C^r_{\text{\rm SF}}(X,\FF;X',\FF')$ for $0\le r\le\infty$.
		
		\end{enumerate}
\end{prop}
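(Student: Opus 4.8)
The plan is to deduce both parts from the corresponding classical facts about spaces of $C^r$ maps between manifolds, by passing to the leafwise local representations~\eqref{xi'_a phi xi_i^-1} and organizing the estimates by a locally finite cover so that the strong foliated $C^r$ topology does the bookkeeping. In each case the relevant property has the form ``a pointwise open condition, checked on a locally finite compact cover'', which is precisely what the strong topology is built to control.

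For~\eqref{i: T_x phi injective/surjective for all x} I would treat injectivity (surjectivity being identical with $n'$ playing the role of $n$). Given $\phi\in C^r_{\text{\rm SF}}(X,\FF;X',\FF')$, $r\ge1$, with $T_x\phi|_{T_x\FF}$ injective for all $x$, I would fix a locally finite family of relatively compact foliated charts $\UU=\{U_i,\xi_i\}$ of $X$ with compact sets $K_i\subset U_i$ covering $X$, and a locally finite foliated atlas $\UU'=\{U'_a,\xi'_a\}$ of $X'$ with indices $a_i$ so that $\phi(K_i)\subset U'_{a_i}$ (possible after shrinking). In $(U_i,\xi_i)$ the leafwise part of $T_x\phi$ at $\xi_i(x)=(\bv,u)$ is the $(n'\times n)$-matrix $\partial\phi^1_{a_ii}/\partial\bv(\bv,u)$, which depends continuously on $(\bv,u)$ since $r\ge1$, and $T_x\phi|_{T_x\FF}$ is injective exactly when this matrix has rank $n$. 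The key point is that the rank-$n$ locus is open in the space of $(n'\times n)$-matrices (its complement is the common zero set of the $n\times n$ minors), so by compactness of $\xi_i(K_i)$ one can choose $\epsilon_i>0$ keeping an $\epsilon_i$-neighbourhood of $\partial\phi^1_{a_ii}/\partial\bv(\xi_i(K_i))$ inside that locus. Then $\NN_{\text{\rm F}}^1(\phi,\UU,\UU',\KK,\EE)$ with $\EE=\{\epsilon_i\}$ is an open subset of $C^r_{\text{\rm SF}}(X,\FF;X',\FF')$ (since $1\le r$) whose members all have rank-$n$ leafwise Jacobian on every $\xi_i(K_i)$, hence injective leafwise tangent map everywhere because the $K_i$ cover $X$. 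The surjective case replaces $n$ by $n'$ throughout.

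For~\eqref{i: proper} the plan is to exhibit, around a given proper $\phi$, a strong neighbourhood all of whose members are automatically proper; derivatives play no role here, so $r=0$ is included. I would fix a locally finite foliated atlas $\UU'=\{U'_a,\xi'_a\}$ of $X'$ with relatively compact domains, and then build a locally finite family of relatively compact foliated charts $\UU=\{U_i,\xi_i\}$ of $X$, with indices $a_i$, such that $\phi(U_i)\subset U'_{a_i}$ and compact sets $K_i\subset U_i$ cover $X$; such a cover exists because around each point of $X$, continuity of $\phi$ and local compactness provide a relatively compact foliated chart whose $\phi$-image lies in a single $U'_a$, after which one passes to a locally finite countable refinement and shrinks it. Take $\NN:=\NN_{\text{\rm F}}^0(\phi,\UU,\UU',\KK,\EE)$ for any $\EE$; it is open in $C^r_{\text{\rm SF}}(X,\FF;X',\FF')$ and every $\psi\in\NN$ satisfies $\psi(K_i)\subset U'_{a_i}$. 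For a compact $C\subset X'$ --- which meets only finitely many $U'_a$, with indices in a finite set $A$ --- every $x\in\psi^{-1}(C)$ lies in some $K_i$ with $U'_{a_i}\cap C\ne\emptyset$, hence $a_i\in A$, so $\psi^{-1}(C)\subset\bigcup\{K_i:a_i\in A\}\subset\phi^{-1}\bigl(\bigcup_{a\in A}U'_a\bigr)$, which sits inside the compact set $\phi^{-1}\bigl(\ol{\bigcup_{a\in A}U'_a}\bigr)$ since $\phi$ is proper; being closed, $\psi^{-1}(C)$ is thus compact, so $\psi$ is proper.

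I do not expect a genuinely deep obstacle: these are the standard manifold arguments, and the real work is to verify that nothing breaks in the foliated setting --- that full rank of the \emph{leafwise} Jacobian and compactness of preimages of compacta are actually controlled by the leafwise data in~\eqref{xi'_a phi xi_i^-1} and by the strong foliated $C^r$ topology. The step I expect to require the most care is the choice of cover in~\eqref{i: proper}: one needs $\UU$ to be subordinate to $\{\phi^{-1}(U'_a)\}_a$, so that the constraint $\psi(K_i)\subset U'_{a_i}$ confines $\psi^{-1}(C)$ to $\phi^{-1}$ of a relatively compact set, while keeping $\UU$ locally finite in $X$ with the compact $K_i$ still covering $X$; this is exactly the place where properness of $\phi$ enters, through the relative compactness of the sets $\phi^{-1}(U'_a)$.
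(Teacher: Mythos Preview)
Your proposal is correct and is precisely the adaptation of Hirsch's Theorems~2.1.1 and~2.1.2 that the paper invokes; the paper's own proof is a one-line reference to those theorems, and you have written out the foliated version in detail. The only point worth a second glance is in~\eqref{i: proper}: you do not need $\{K_i:a_i\in A\}$ to be finite, only that its union sits in the compact set $\phi^{-1}\bigl(\ol{\bigcup_{a\in A}U'_a}\bigr)$, which your argument already establishes.
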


\begin{proof}
	Adapt the proofs of \cite[Theorems~2.1.1 and~2.1.2]{Hirsch1976}.
\end{proof}

For general $C^r$ foliated maps $X\to X'$, $r\ge1$, the injectivity/surjectivity of the restrictions of their tangent maps to the fibers does not have any consequence on their transverse behavior, given by the induced morphisms $\HH\to\HH'$. Thus the foliated immersions/submersions or foliated local homeomorphisms cannot be described using only the tangent map. So conditions on the induced morphisms $\HH\to\HH'$ must be added to extend some deeper results. For this reason, we use a second version of weak/strong $C^r$ topology introduced in \cite{AlvarezMasa2006}, which is finer than the weak/strong foliated $C^r$ topology. The {\em strong plaquewise $C^r$ topology\/} has a base of open sets $\NN_{\text{\rm P}}^k(\phi,\UU,\UU',\KK,\EE)$, defined by adding the condition $p'_{a_i}\phi=p'_{a_i}\psi$ on every $K_i$ to the above definition of $\NN_{\text{\rm F}}^k(\phi,\UU,\UU',\KK,\EE)$; using~\eqref{xi'_a phi xi_i^-1}, this extra condition can be also written as $\phi^2_{a_ii}=\psi^2_{a_ii}$ on $p_i(K_i)$ for all $i$. The {\em weak plaquewise $C^r$ topology\/} is similarly defined by requiring the conditions only for finite families of indices $i$. The subindex ``WP/SP'' will be added to the notation to indicate that the weak/strong plaquewise $C^r$ topology is considered in a family of $C^r$ foliated maps. Note that, if two foliated maps are close enough in $C^r_{\text{\rm SP}}(X,\FF;X',\FF')$, then they induce the same morphism $\HH\to\HH'$; in fact, they are leafwisely homotopic if $r=\infty$, as follows by taking basic open sets $\NN_{\text{\rm P}}^k(\phi,\UU,\UU',\KK,\EE)$ as above where the plaques of the foliated charts in $\UU'$ are convex balls in the leaves for a given Riemannian metric on $X'$, and then using geodesic segments to define homotopies.

With the strong plaquewise $C^r$ topology, we can continue the direct extensions of results about spaces of $C^r$ maps between manifolds.  

\begin{prop}\label{p: list, SP}
	The following properties hold:
		\begin{enumerate}	
		
			\item\label{i: foliated embeddings} $\Emb^r(X,\FF;X',\FF')$ is open in $C^r_{\text{\rm SP}}(X,\FF;X',\FF')$ for $1\le r\le\infty$.
			
			\item\label{i: closed foliated embeddings} For $1\le r\le\infty$, the set of closed $C^r$ foliated embeddings is open in $C^r_{\text{\rm SP}}(X,\FF;X',\FF')$.

			\item\label{i: Diffeo^r is open} $\Diffeo^r(X,\FF;X',\FF')$ is open in $C^r_{\text{\rm SP}}(X,\FF;X',\FF')$ for $1\le r\le\infty$.
			
			\item\label{i: C^s is dense in C^r_SP} $C^s(X,\FF;X',\FF')$ is dense in $C^r_{\text{\rm SP}}(X,\FF;X',\FF')$ for $0\le r<s\le\infty$.
			
			\item\label{i: Diffeo^s is dense in Diffeo^r_SP} $\Diffeo^s(X,\FF;X',\FF')$ is dense in $\Diffeo^r_{\text{\rm SP}}(X,\FF;X',\FF')$ for $1\le r<s\le\infty$.
			
			\item\label{i: C^r => C^s} If $1\le r<\infty$, any $C^r$ foliated space is $C^r$ diffeomorphic to a $C^\infty$ foliated space.
			
			\item\label{i: C^s diffeomorphic <=> C^r diffeomorphic} If $1\le r<s\le\infty$, two $C^s$ foliated spaces are $C^s$ diffeomorphic if and only if they are $C^r$ diffeomorphic.
		
		\end{enumerate}
\end{prop}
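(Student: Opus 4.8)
The plan is to obtain each item as the foliated counterpart of a classical result on spaces of $C^r$ maps between manifolds, following the arguments of \cite[Chapter~2]{Hirsch1976} together with their partial foliated adaptation in \cite{AlvarezMasa2006,AlvarezMasa2008}. The observation common to all of them is that, inside a basic neighbourhood $\NN_{\text{\rm P}}^k(\phi,\UU,\UU',\KK,\EE)$, the plaquewise condition $p'_{a_i}\psi=p'_{a_i}\phi$ on each $K_i$ fixes the ``transverse part'' of $\psi$: by~\eqref{xi'_a phi xi_i^-1} one has $\psi^2_{a_ii}=\phi^2_{a_ii}$ on $p_i(K_i)$, so $\psi$ sends the plaque of $(U_i,\xi_i)$ through a point into the same plaque of $(U'_{a_i},\xi'_{a_i})$ as $\phi$ does, $\psi$ induces the same morphism $\HH\to\HH'$ as $\phi$, and, when $r=\infty$, $\psi$ is leafwisely homotopic to $\phi$. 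Hence each statement reduces, in foliated charts, to a statement about the leafwise components $\phi^1_{a_ii}(\cdot,u)$, viewed as a family of $C^r$ maps between open subsets of Euclidean spaces depending continuously on the transverse parameter $u$, to which the manifold arguments apply uniformly in $u$.

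With this reduction, items~\eqref{i: foliated embeddings},~\eqref{i: closed foliated embeddings} and~\eqref{i: Diffeo^r is open} follow by adapting the classical proofs that embeddings, closed embeddings and diffeomorphisms form open sets in the strong $C^r$ topology on maps between manifolds (see \cite[Chapter~2]{Hirsch1976}). One starts from Proposition~\ref{p: list, SF}-\eqref{i: T_x phi injective/surjective for all x}, which already places the foliated maps whose leafwise tangent map is fiberwise injective (resp.\ surjective, resp.\ bijective) in an open subset of $C^r_{\text{\rm SF}}(X,\FF;X',\FF')$, hence of $C^r_{\text{\rm SP}}(X,\FF;X',\FF')$; then a compactness argument, together with the fact recalled above that a map in a small enough plaquewise neighbourhood of a foliated embedding has the same induced morphism and respects the reference plaques, upgrades fiberwise leafwise immersivity to injectivity and to being a foliated embedding onto its (leafwise open) image. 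Properness, open by Proposition~\ref{p: list, SF}-\eqref{i: proper}, takes care of the closed case, and combining injectivity, fiberwise surjectivity of the leafwise tangent map, and properness yields openness of $\Diffeo^r(X,\FF;X',\FF')$.

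Items~\eqref{i: C^s is dense in C^r_SP} and~\eqref{i: Diffeo^s is dense in Diffeo^r_SP} are the heart of the matter. For~\eqref{i: C^s is dense in C^r_SP}, given $\phi\in C^r(X,\FF;X',\FF')$ and a basic plaquewise neighbourhood of it, the plaquewise condition forces us to leave each $\phi^2_{a_ii}$ untouched and to approximate only the leafwise components $\phi^1_{a_ii}(\bv,u)$ in the $\bv$-variables, with $u$ as a continuous parameter. Using a $C^\infty$ partition of unity subordinated to $\UU$ (\cite[Proposition~2.8]{MooreSchochet1988}) and leafwise mollification---convolution in the $\bv$-directions only, glued along the defining cocycle $\{U_i,p_i,h_{ij}\}$---one produces a $C^s$ foliated map arbitrarily close to $\phi$ in $C^r_{\text{\rm SP}}(X,\FF;X',\FF')$, exactly as in \cite[Theorem~2.2.7]{Hirsch1976} but carried out fiberwise over the transverse coordinate. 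Item~\eqref{i: Diffeo^s is dense in Diffeo^r_SP} then follows by combining this with~\eqref{i: Diffeo^r is open}: a sufficiently close $C^s$ approximation of a $C^r$ foliated diffeomorphism is again a foliated diffeomorphism, and the leafwise inverse function theorem (applied uniformly in the transverse parameter, the transverse part being already a homeomorphism of the orbit spaces) shows that its inverse is $C^s$ foliated.

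Finally, items~\eqref{i: C^r => C^s} and~\eqref{i: C^s diffeomorphic <=> C^r diffeomorphic} are the foliated Whitney-type statements, obtained as in \cite[Theorems~2.2.9 and~2.2.10]{Hirsch1976}. For~\eqref{i: C^r => C^s} one smooths the leafwise changes of coordinates $g_{ij}$ of a regular $C^r$ foliated atlas, keeping the elementary holonomy transformations $h_{ij}$ (hence the transverse structure) fixed, inductively over the locally finite cover and using~\eqref{i: C^s is dense in C^r_SP}, producing a $C^\infty$-compatible foliated atlas and a $C^r$ foliated diffeomorphism onto the resulting $C^\infty$ foliated space. For~\eqref{i: C^s diffeomorphic <=> C^r diffeomorphic}, the nontrivial implication is immediate from~\eqref{i: Diffeo^s is dense in Diffeo^r_SP} and~\eqref{i: Diffeo^r is open}: a $C^r$ foliated diffeomorphism between two $C^s$ foliated spaces is approximated by a $C^s$ one, which is still a foliated diffeomorphism if the approximation is close enough. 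The main obstacle throughout is~\eqref{i: C^s is dense in C^r_SP}: one must mollify only in the leaf directions, preserve the plaquewise transverse data exactly, and keep everything compatible with a merely $C^r$ regular atlas and its defining cocycle, which is where the bulk of the (routine but delicate) bookkeeping lies.
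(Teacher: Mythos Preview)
Your proposal is correct and follows essentially the same approach as the paper: the paper's proof consists of the single sentence ``Adapt the proofs of \cite[Theorems~2.1.4,~2.1.6,~2.2.6,~2.2.7,~2.2.9 and~2.2.10, and Corollary~2.1.6]{Hirsch1976},'' and your write-up is precisely a sketch of how that adaptation goes, with the key observation that the plaquewise condition freezes the transverse components $\phi^2_{a_ii}$ so that only the leafwise components need to be handled. If anything, you have supplied more detail than the paper does.
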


\begin{proof}
	Adapt the proofs of \cite[Theorems~2.1.4,~2.1.6,~2.2.6,~2.2.7,~2.2.9 and~2.2.10, and Corollary~2.1.6]{Hirsch1976}.
\end{proof}

Like in the case of manifolds, it easily follows from Proposition~\ref{p: list, SP}-\eqref{i: C^s is dense in C^r_SP} that, for $0\le r<s\le\infty$, if there is a $C^r$ leafwise homotopy between $C^s$ foliated maps, then there is a $C^s$ leafwise homotopy between them.

The above openness statements are stronger with the strong foliated $C^r$ topology, whereas the denseness statements are stronger for the strong plaquewise $C^r$ topology. There is no version of Proposition~\ref{p: list, SP}-\eqref{i: foliated embeddings} with the strong foliated $C^r$ topology (for instance, consider the case of compact spaces foliated by points). However we can prove a weaker form of that statement by using certain subspaces $C^r_{\text{\rm SF}}(X,\FF;X',\FF')$ defined as follows. A foliated map $\phi:X\to X'$ is called a {\em transverse embedding\/} (respectively, {\em transverse equivalence\/}) if the induced morphism $\Phi:\HH\to\HH'$ is generated by embeddings (respectively, $\Phi$ is an isomorphism). Observe that $\FF'(\phi(X))=X'$ if $\phi$ is a transverse equivalence. A subset $\MM\subset C(X,\FF;X',\FF')$ of transverse embeddings (respectively, transverse equivalences) is called {\em uniform\/} if there are some foliated atlases, $\UU$ of $X$ and $\UU'$ of $X'$ like in Section~\ref{ss: foliated sps}, such that, for all $\phi\in\MM$, the maps $\phi^2_{ai}$ in~\eqref{xi'_a phi xi_i^-1} are embeddings (respectively, open embeddings). Note that, if these properties hold with $\UU$ and $\UU'$, then they hold with all finer atlases. For example, $\Emb(X,\FF;X',\FF')$ consists of uniform transverse embeddings, and $\Homeo(X,\FF;X',\FF')$ consists of uniform transverse equivalences.

\begin{prop}\label{p: Emb^r(FF,FF') cap MM is open in MM}
	For $1\le r\le\infty$, let $\MM\subset C^r_{\text{\rm SF}}(X,\FF;X',\FF')$ be a uniform subspace of transverse embeddings. Then $\Emb^r(X,\FF;X',\FF')\cap\MM$ is open in $\MM$.
\end{prop}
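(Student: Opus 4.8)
The plan is to follow the proof of Proposition~\ref{p: list, SP}-\eqref{i: foliated embeddings}, which itself adapts the manifold statement that $\Emb^r(M,N)$ is open in the strong $C^r$ topology \cite[Theorem~2.1.4]{Hirsch1976}, making two substitutions. First, the control on the transverse components $\phi^2_{ai}$ of~\eqref{xi'_a phi xi_i^-1} that the strong plaquewise topology provided there (it pins them down exactly) is here supplied instead by the uniformity of $\MM$: once we fix foliated atlases $\UU$ of $X$ and $\UU'$ of $X'$ witnessing that $\MM$ is uniform, \emph{every} $\psi\in\MM$ has the maps $\psi^2_{ai}$ embeddings, with no smallness needed. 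Second, since a foliated embedding is in particular a leafwise immersion, the immersion condition will be kept open using Proposition~\ref{p: list, SF}-\eqref{i: T_x phi injective/surjective for all x} (equivalently, by taking the order of approximation $k\ge1$, so that foliated $C^1$-closeness controls leafwise tangent maps). The transverse behaviour, which is where the strong foliated $C^r$ topology is too weak in general, is thus neutralised by the hypothesis that $\MM$ be uniform.

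Concretely, fix $\phi\in\Emb^r(X,\FF;X',\FF')\cap\MM$; the goal is a strong foliated $C^r$ neighbourhood $\NN$ of $\phi$ with $\NN\cap\MM\subset\Emb^r(X,\FF;X',\FF')$. Since uniformity is inherited by finer foliated atlases, and since a leafwise immersion is locally a leafwise embedding, I would first refine $\UU$ and $\UU'$ so that $\UU$ is regular, there is a locally finite family of compact sets $K_i\subset U_i$ covering $X$, there are indices $a_i$ with $\phi(\ol{U_i})\subset U'_{a_i}$, and each leafwise map $\phi^1_{a_ii}(\cdot,u)$ is an embedding of $B_i$. Then, because $\phi^2_{a_ii}$ is an embedding, the local representation $\xi'_{a_i}\phi\xi_i^{-1}$ of~\eqref{xi'_a phi xi_i^-1} embeds each plaque onto a plaque and sends distinct plaques to distinct (hence disjoint) plaques. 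By the openness of embeddings of compact sets in the strong $C^r$ topology for manifolds (together with Proposition~\ref{p: list, SF}-\eqref{i: T_x phi injective/surjective for all x} for the immersion part), there is a strong foliated $C^r$ neighbourhood $\NN_1$ of $\phi$ such that every $\psi\in\NN_1$ still has each $\psi^1_{a_ii}(\cdot,u)$, restricted to the plaque-piece meeting $K_i$, an embedding; combined with uniformity, for $\psi\in\NN_1\cap\MM$ the restriction of $\psi$ to a neighbourhood of each $K_i$ is a topological embedding.

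It then remains to shrink to a strong foliated $C^r$ neighbourhood $\NN\subset\NN_1$ so that, for $\psi\in\NN\cap\MM$, this local embedding property globalises: $\psi$ becomes injective and a homeomorphism onto its image. This is the step where the argument of \cite[Theorem~2.1.4]{Hirsch1976} has to be imported in full. For injectivity, given distinct points $x,y$: if they lie in a common plaque of some $U_i$, the local embedding property applies; if they lie in different plaques of the same $(U_i,\xi_i)$, then $p_i(x)\ne p_i(y)$, hence $\psi^2_{a_ii}(p_i(x))\ne\psi^2_{a_ii}(p_i(y))$ by uniformity, so $\psi(x)$ and $\psi(y)$ sit in disjoint plaques of $U'_{a_i}$; and for the remaining ``far'' pairs of indices one uses that $\phi(K_i)\cap\phi(K_j)=\emptyset$, that disjoint compacta in the Polish space $X'$ are separated, and that this separation survives a sufficiently strong foliated $C^r$ perturbation, exactly via Hirsch's covering argument, where local finiteness of $\{U_i\}$ and the simultaneous control of all charts afforded by the \emph{strong} topology enter. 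Finally $\psi$ is a homeomorphism onto its image because the $K_i$ are compact, $\psi|_{K_i}$ is an embedding, $\psi$ is injective, and---keeping $\psi(K_i)$ close to the locally finite family $\{\phi(K_i)\}$ in $X'$---the family $\{\psi(K_i)\}$ stays locally finite in $X'$, whence $\psi^{-1}$ is continuous by the standard point-set argument.

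I expect the global step to be the only real obstacle: securing the homeomorphism-onto-image property for non-compact $X$ forces the neighbourhood data $(\UU,\UU',\KK,\EE)$ to be chosen carefully and simultaneously, so that distinct pieces of the image remain separated and locally finite in $X'$, just as in the manifold case; by contrast the transverse direction gives no trouble, since uniformity of $\MM$ already forces every $\psi^2_{ai}$ to be an embedding and hence handles what the plaquewise topology was doing in Proposition~\ref{p: list, SP}-\eqref{i: foliated embeddings}. When $X$ is compact---the case used in the applications---this last step is immediate and the whole proof is short.
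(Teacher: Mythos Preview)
Your proposal is correct and follows essentially the same route as the paper: reduce to $r=1$, use uniformity of $\MM$ to guarantee that every $\psi^2_{ai}$ is an embedding (replacing the role of the plaquewise topology), choose $\EE$ so that the plaquewise maps $\psi^1_{a_ii}(\cdot,u)$ are $C^1$ embeddings near each $K_i$ (this is \cite[Lemma~1.3]{Hirsch1976}), obtaining that each $\psi|_{V_i}$ is a foliated embedding, and then globalize via the final part of \cite[Theorem~2.1.4]{Hirsch1976}. The only notable difference is in the presentation of the global step: the paper handles injectivity and the homeomorphism-onto-image property in one stroke by choosing, for each $i$, disjoint open sets $W'_i,V'_i\subset X'$ with $\phi(\ol{W_i})\subset W'_i$ and $\phi(X\setminus V_i)\subset V'_i$, and a $C^0$ strong-foliated neighbourhood $\NN_0$ preserving these inclusions; this immediately forces $\psi^{-1}(W'_i\cap\psi(X))\subset V_i$, so $\psi^{-1}$ is continuous on the open cover $\{W'_i\cap\psi(X)\}$ of $\psi(X)$, avoiding your separate case analysis and local-finiteness argument.
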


\begin{proof}
	It is enough to prove the case $r=1$. For any $\phi\in\Emb^1(X,\FF;X',\FF')\cap\MM$, consider a basic open set $\NN_1:=\NN_{\text{\rm F}}^1(\phi,\UU,\UU',\KK,\EE)$ in $C^1_{\text{\rm SF}}(X,\FF;X',\FF')$ as above. We can assume that $\KK$ (and therefore $\UU$) covers $X$, and $\UU'$ covers $X'$. After refinements, we can choose $\UU$, $\UU'$ and $\KK$ such that the maps $\psi^2_{ai}$ are embeddings for all $\psi\in\MM$, and the interiors $V_i:=\mathring{K_i}$ cover $X$. Take an open cover $\{W_i\}$ of $X$ with $\ol{W_i}\subset V_i$ for all $i$. By \cite[Lemma~1.3]{Hirsch1976}, we can choose $\EE$ such that the maps $\psi:p_i^{-1}(u)\cap V_i\to p_{a_i}^{\prime-1}(\psi^2_{a_ii}(u))$ are $C^1$ embeddings for $u\in p_i(V_i)$ and $\psi\in\NN_1$. Hence $\psi:V_i\to X'$ is a $C^1$ foliated embedding for all $\psi\in\NN_1\cap\MM$. 
	
	Now, we adapt the final part of the proof of \cite[Theorem~1.4]{Hirsch1976} as follows. Since $\phi$ is an embedding, we get disjoint open subsets $V'_i,W'_i\subset X'$ for every $i$ such that $\phi(\ol{W_i})\subset W'_i$ and $\phi(X\setminus V_i)\subset V'_i$. Then it is easy to find a neighborhood $\NN_0$ of $\phi$ in $C_{\text{\rm SF}}(X,\FF;X',\FF')$ so that $\psi(\ol{W_i})\subset W'_i$ and $\psi(X\setminus V_i)\subset V'_i$ for all $\psi\in\NN_0$. We finally obtain $\NN_0\cap\NN_1\cap\MM\subset\Emb^1(X,\FF;X',\FF')$.
\end{proof}

\begin{prop}\label{p: Diffeo^r(FF,FF') cap MM is open in MM}
	For $1\le r\le\infty$, let $\MM\subset C^r_{\text{\rm SF}}(X,\FF;X',\FF')$ be a uniform subspace of transverse equivalences. Then $\Diffeo^r(X,\FF;X',\FF')\cap\MM$ is open in $\MM$.
\end{prop}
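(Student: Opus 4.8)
The plan is to deduce this from Proposition~\ref{p: Emb^r(FF,FF') cap MM is open in MM}, from the openness of properness (Proposition~\ref{p: list, SF}-\eqref{i: proper}), and from the elementary fact that a proper $C^r$ foliated embedding lying in $\MM$ is automatically a $C^r$ foliated diffeomorphism. Fix $\phi\in\Diffeo^r(X,\FF;X',\FF')\cap\MM$. Since a uniform family of transverse equivalences is in particular a uniform family of transverse embeddings, Proposition~\ref{p: Emb^r(FF,FF') cap MM is open in MM} provides an open neighborhood $\NN_1$ of $\phi$ in $\MM$ consisting of $C^r$ foliated embeddings. As $\phi$, being a diffeomorphism, is proper, intersecting $\NN_1$ with the open subset $\Prop^r(X,\FF;X',\FF')$ of $C^r_{\text{\rm SF}}(X,\FF;X',\FF')$ supplied by Proposition~\ref{p: list, SF}-\eqref{i: proper} yields an open neighborhood $\NN\subset\MM$ of $\phi$ consisting of proper $C^r$ foliated embeddings.

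It then remains to check $\NN\subset\Diffeo^r(X,\FF;X',\FF')$. Let $\psi\in\NN$. Because $\phi$ is a diffeomorphism we have $n=\dim\FF=\dim\FF'=n'$, so the foliated immersion $\psi$ has tangent map restricting to a linear isomorphism on each plaque; equivalently, in each local representation~\eqref{xi'_a phi xi_i^-1} the map $\psi^1_{ai}(\cdot,u)$ is a local $C^1$ diffeomorphism, while the maps $\psi^2_{ai}$ are open embeddings because $\MM$ is uniform with transverse equivalences. Hence, by the inverse function theorem (with the transverse variable as a parameter), every local representation of $\psi$ factors as a local $C^1$ diffeomorphism followed by an open embedding, so $\psi\colon X\to X'$ is a local homeomorphism and in particular an open map. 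Since $\psi$ is moreover proper into a locally compact Hausdorff space, it is closed, so $\psi(X)$ is clopen in $X'$, i.e.\ a union of connected components; and since $\psi\in\MM$ is a transverse equivalence, $\FF'(\psi(X))=X'$, so $\psi(X)$ meets every leaf and hence every component, forcing $\psi(X)=X'$. Thus $\psi$ is a continuous open bijection, i.e.\ a homeomorphism; a short connectivity argument shows it carries each leaf onto a leaf, and being locally a $C^r$ foliated diffeomorphism and globally bijective, its inverse is again a $C^r$ foliated map. Therefore $\psi\in\Diffeo^r(X,\FF;X',\FF')$, and $\NN$ is the desired neighborhood.

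The bulk of the argument is an adaptation of the manifold statement together with the transverse bookkeeping already arranged in Proposition~\ref{p: Emb^r(FF,FF') cap MM is open in MM}. The one point requiring genuine care, and the only truly foliated ingredient, is the surjectivity of $\psi$: one must combine the local-homeomorphism property of $\psi$ in foliated charts (which rests on equidimensionality for the leafwise part and on the uniformity of $\MM$ for the transverse part), properness, and the identity $\FF'(\psi(X))=X'$ to conclude $\psi(X)=X'$. Everything else is routine once the neighborhood $\NN$ is in place.
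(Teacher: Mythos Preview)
Your proof is correct and follows essentially the same route as the paper: both combine Proposition~\ref{p: Emb^r(FF,FF') cap MM is open in MM} with the openness of properness (Proposition~\ref{p: list, SF}-\eqref{i: proper}) to reduce to showing that a nearby proper $C^r$ foliated embedding in $\MM$ is automatically a diffeomorphism. The paper's argument is terser, additionally invoking Proposition~\ref{p: list, SF}-\eqref{i: T_x phi injective/surjective for all x} and asserting $\Emb^r\cap\MM'=\Diffeo^r$ for $\MM':=\Prop^r\cap\{T_x\phi\ \text{surjective}\ \forall x\}$, whereas you unpack the surjectivity of $\psi$ explicitly via the clopen-image argument together with $\FF'(\psi(X))=X'$; this is the same content spelled out in more detail.
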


\begin{proof}
	We adapt the proofs of \cite[Corollary~1.6 and Theorem~1.6]{Hirsch1976}. The set
		\[
			\MM'=\{\,\phi\in \Prop^r(X,\FF;X',\FF')\mid T_x\phi\ \text{is surjective}\ \forall x\in X\,\}
		\]
	is closed in $C^r_{\text{\rm SF}}(X,\FF;X',\FF')$ by Proposition~\ref{p: list, SF}-\eqref{i: T_x phi injective/surjective for all x},\eqref{i: proper}. On the other hand, $\Emb^r(X,\FF;X',\FF')\cap\MM$ is open in $\MM$ by Proposition~\ref{p: Emb^r(FF,FF') cap MM is open in MM}. Thus the result follows because $\Emb^r(X,\FF;X',\FF')\cap\MM'=\Diffeo^r(X,\FF;X',\FF')$.
\end{proof}

According to Proposition~\ref{p: list, SP}-\eqref{i: C^r => C^s},\eqref{i: C^s diffeomorphic <=> C^r diffeomorphic}, we will only consider either ($C^0$) foliated spaces or $C^\infty$ foliated spaces from now on.

\begin{prop}\label{p: T_x phi iso for all x}
	Let $\phi:X\to X'$ be a foliated map. Suppose that $X'$ is equipped with a $C^\infty$ structure. Then there is at most one $C^\infty$ structure on $X$ such that $\phi$ is $C^\infty$ and $T_x\phi$ is an isomorphism for all $x\in X$.
\end{prop}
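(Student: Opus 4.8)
The plan is to show that if two $C^\infty$ structures $\mathcal{C}_1$ and $\mathcal{C}_2$ on $X$ both satisfy the stated conclusion (both make $\phi$ a $C^\infty$ map and have $T_x\phi$ an isomorphism for all $x$), then $\mathcal{C}_1=\mathcal{C}_2$; equivalently, the identity $\id_X$ is a $C^\infty$ foliated diffeomorphism $(X,\mathcal{C}_1)\to(X,\mathcal{C}_2)$. Since the hypotheses are symmetric in $\mathcal{C}_1,\mathcal{C}_2$, it is enough to prove that $\id_X$ is $C^\infty$ from $\mathcal{C}_1$ to $\mathcal{C}_2$; and because both structures refine the same underlying topological foliated structure of $X$, this amounts to showing that for any $C^\infty$ chart $\xi_1$ of $\mathcal{C}_1$ and any $C^\infty$ chart $\xi_2$ of $\mathcal{C}_2$, the change of coordinates $\xi_2\xi_1^{-1}(\bv,u)=(g(\bv,u),h(u))$ (of the form~\eqref{xi_i xi_j^-1}) has $g$ leafwise $C^\infty$, i.e.\ $C^\infty$ in $\bv$ with all $\bv$-partials jointly continuous.

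The first step is a leafwise inverse function theorem with a transverse parameter. Fix $x\in X$. Choosing foliated charts and shrinking their domains, write the local representations~\eqref{xi'_a phi xi_i^-1} of $\phi$ over a \emph{common} $C^\infty$ chart $\xi'$ of $X'$: with respect to $\mathcal{C}_1$, $\xi'\phi\xi_1^{-1}(\bv,u)=(a(\bv,u),\alpha(u))$, and with respect to $\mathcal{C}_2$, $\xi'\phi\xi_2^{-1}(\mathbf{w},u')=(b(\mathbf{w},u'),\beta(u'))$; here $a$ is $C^\infty$ in $\bv$, $b$ is $C^\infty$ in $\mathbf{w}$, and the leafwise differentials $D_{\bv}a$ and $D_{\mathbf{w}}b$ are everywhere invertible since $T_x\phi$ is an isomorphism for both structures. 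A compactness argument using the joint continuity of $b$ and of $D_{\mathbf{w}}b$ (which holds because $\phi$ is a $C^\infty$ foliated map) shows that, after shrinking, $\mathbf{w}\mapsto b(\mathbf{w},u')$ is for each $u'$ a diffeomorphism onto an open set, with inverse $b^{-1}(\cdot,u')$; moreover $b^{-1}$ is jointly continuous, and differentiating $b(b^{-1}(\mathbf{z},u'),u')=\mathbf{z}$ recursively in $\mathbf{z}$ shows all $\mathbf{z}$-partials of $b^{-1}$ are jointly continuous in $(\mathbf{z},u')$. Hence $b^{-1}$ is leafwise $C^\infty$.

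Now combine the two representations. On overlaps, $\xi'\phi\xi_1^{-1}=(\xi'\phi\xi_2^{-1})\circ(\xi_2\xi_1^{-1})$ together with the fact that the transverse part of $\xi_2\xi_1^{-1}$ depends only on $u$ give $a(\bv,u)=b(g(\bv,u),h(u))$, hence $g(\bv,u)=b^{-1}(a(\bv,u),h(u))$ (shrinking domains so the inversion is valid). This exhibits $g$ as a composite of the leafwise $C^\infty$ maps $a$ and $b^{-1}$ with the continuous transverse map $u\mapsto h(u)$ substituted into the transverse slot, so $g$ is leafwise $C^\infty$. Covering $X$ by such neighborhoods shows $\id_X\colon(X,\mathcal{C}_1)\to(X,\mathcal{C}_2)$ is $C^\infty$; by the symmetric argument it is a $C^\infty$ diffeomorphism, and therefore $\mathcal{C}_1=\mathcal{C}_2$.

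The main obstacle is the regularity bookkeeping in the parametrized inverse function theorem: one needs not merely that the plaquewise inverse $b^{-1}(\cdot,u')$ is smooth for each fixed $u'$, but that \emph{all} of its plaquewise derivatives are jointly continuous in $(\mathbf{z},u')$, as the foliated $C^\infty$ condition requires; this is exactly where one uses that $T_x\phi$ is an isomorphism (so $D_{\mathbf{w}}b$ is invertible, not merely injective) and that the first-order leafwise derivative of $\phi$ is jointly continuous (part of $\phi$ being a $C^\infty$ foliated map), and where one must be careful to shrink every chart domain so the inversions hold uniformly over the transverse parameter.
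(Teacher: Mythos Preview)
Your proof is correct and follows essentially the same approach as the paper's own argument: write the local representations of $\phi$ in charts of each of the two $C^\infty$ structures on $X$ against a common chart of $X'$, invoke the inverse function theorem with transverse parameter to invert one of the leafwise components (using that $T_x\phi$ is an isomorphism), and then express the leafwise part $g$ of the change of coordinates $\xi_2\xi_1^{-1}$ as a composite of this inverse with the other leafwise component. The paper carries out the identical computation (with the roles of the two structures swapped, obtaining $g_{12}(\bv_2,u_2)=\bar g'_1(g'_2(\bv_2,u_2),h_{12}(u_2))$), and your discussion of the ``regularity bookkeeping'' for the parametrized inverse makes explicit a point the paper passes over in one sentence.
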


\begin{proof}
	Consider two $C^\infty$ structures on $X$, and take $C^\infty$ foliated charts, $\xi_1:U_1\to B_1\times T_1$ of the first $C^\infty$ structure on $X$, $\xi_2:U_2\to B_2\times T_2$ of the second $C^\infty$ structure on $X$, and $\xi':U'\to B'\times T'$ of the $C^\infty$ structure on $X'$. We can assume that $U_2\subset U_1$ and $\phi(U_1)\subset U'$. Then
		\begin{align*}
			\xi'\phi\xi_1^{-1}(\bv_1,u_1)&=(g'_1(\bv_1,u_1),h'_1(u_1))\;,\\
			\xi'\phi\xi_2^{-1}(\bv_2,u_2)&=(g'_2(\bv_2,u_2),h'_2(u_2))\;,\\
			\xi_1\xi_2^{-1}(\bv_2,u_2)&=(g_{12}(\bv_2,u_2),h_{12}(u_2))\;,
		\end{align*}
	for $(\bv_k,u_k)\in B_k\times T_k$, $k=1,2$, where $g'_k:B_k\times T_k\to B'$ has partial derivatives of arbitrary order with respect to $\bv_k$, continuous on $B_k\times T_k$, and $g_{12}:B_2\times T_2\to B_1$ is continuous. Moreover the differential map of $g'_1$ with respect to $\bv_1$ is an isomorphism at any point. Therefore, by the inverse function theorem, we can assume that $g'_1(\cdot,u_1):B_1\to g'_1(B_1\times\{u_1\})$ is a $C^\infty$ diffeomorphism for all $u_1\in T_1$. Its inverse function is denoted by $\bar g'_1(\cdot,u_1):g'_1(B_1\times\{u_1\})\to B_1$. For any small ball $B'_0\subset B'$, let $T_{10}\subset T_1$ be the open subset that consists of the points $u_1\in T_1$ such that $B'_0\subset g'_1(B_1\times\{u_1\})$. It also follows from the inverse function theorem that the partial derivatives of arbitrary order of $\bar g'_1(\cdot,u_1):B'_0\to B_1$ depend continuously on $u_1$. Since
		\[
			g_{12}(\bv_2,u_2)=\bar g_1(g_2(\bv_2,u_2),h_{12}(u_2))
		\]
	on $B_2\times h_{21}(T_{10})$, the function $g_{12}:B_2\times h_{21}(T_{10})\to B_1$ has partial derivatives of arbitrary order with respect to $\bv_2$, continuous on $B_2\times h_{21}(T_{10})$.
\end{proof}

\subsection{Center of mass}\label{ss: center of mass}

In Section~\ref{s: C^infty local transverse action}, we will use the center of mass of a mass distribution on a Riemannian
manifold $M$ \cite{Karcher1977}, \cite[Section~IX.7]{Chavel2006}.

Let $\Omega\subset M$ be a compact submanifold with boundary with $\dim\Omega=\dim M$. For $0\le r\le\infty$, let $\CC(\Omega)$ be the set of functions $f\in
C^{r+2}(\Omega)$ such that $\grad f$ is an outward pointing
vector field on $\partial\Omega$ and $\Hess f$ is positive definite on
the interior $\mathring\Omega$ of $\Omega$. Note that $\CC(\Omega)$ is
open in the Banach space $C^{r+2}(\Omega)$ with the norm $\|\
\|_{C^{r+2},\Omega,g}$, and therefore it is a $C^\infty$ Banach
manifold. Moreover $\CC(\Omega)$ is preserved by the operations of sum
and product by positive numbers. Any $f\in\CC(\Omega)$ attains its
minimum value at a unique point $\bm_\Omega(f)\in\mathring\Omega$, defining a
function $\bm_\Omega:\CC(\Omega)\to\mathring\Omega$.

\begin{lem}[{\cite[Lemma~10.1 and Remark~11-(ii)]{AlvarezBarralCandel2016}}]\label{l: bm_Omega is C^r}
	The map $\bm_\Omega$ is $C^r$.
\end{lem}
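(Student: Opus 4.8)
The plan is to realize $\bm_\Omega(f)$ as the unique zero of $\grad f$ on $\mathring\Omega$ and to apply the implicit function theorem in Banach spaces, after first checking by hand that $\bm_\Omega$ is continuous (this settles the case $r=0$, and it is needed afterwards to locate $\bm_\Omega(f)$). I treat $r<\infty$, so that $C^{r+2}(\Omega)$ is a Banach space; the case $r=\infty$ then follows by the same argument in the Fréchet/convenient setting, or by noting that $\bm_\Omega$ is $C^r$ for every finite $r$.

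\emph{Continuity.} The functional $f\mapsto\min_\Omega f$ is $1$-Lipschitz for the sup-norm, hence continuous on $C^0(\Omega)$. If $f_k\to f$ in $C^{r+2}(\Omega)$, write $m_k=\bm_\Omega(f_k)$ and $m=\bm_\Omega(f)$. Since $\Omega$ is compact, any subsequence of $(m_k)$ has a further subsequence $m_{k_j}\to m^*\in\Omega$; then $f_{k_j}(m_{k_j})\to f(m^*)$ (uniform convergence together with $m_{k_j}\to m^*$), while $f_{k_j}(m_{k_j})=\min_\Omega f_{k_j}\to\min_\Omega f$, so $f(m^*)=\min_\Omega f$, whence $m^*=m$ by uniqueness of the minimizer. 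Thus $m_k\to m$ and $\bm_\Omega$ is continuous.

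\emph{Implicit function theorem.} Let $r\ge1$ and fix $f_0\in\CC(\Omega)$, $m_0=\bm_\Omega(f_0)\in\mathring\Omega$. Choose a chart $\varphi$ of $M$ around $m_0$ with $\varphi(m_0)=0$ and with a relatively compact ball $\hat V=\varphi(V)\subset\R^n$, where $V\subset\mathring\Omega$. Define
\[
  F\colon\CC(\Omega)\times\hat V\to\R^n,\qquad F(f,\hat x)=\nabla(f\circ\varphi^{-1})(\hat x).
\]
Then: (i) $F(f_0,0)=0$, since $m_0$ is an interior minimizer of $f_0$, hence a critical point; (ii) $\partial_{\hat x}F(f_0,0)$ is the Hessian of $f_0\circ\varphi^{-1}$ at $0$, which at the critical point $m_0$ equals $\Hess f_0(m_0)$, positive definite because $f_0\in\CC(\Omega)$, hence invertible; (iii) $F$ is jointly $C^r$, because $f\mapsto d(f\circ\varphi^{-1})$ is bounded linear, hence $C^\infty$, from $C^{r+2}(\Omega)$ into $C^{r+1}(\hat V,\R^n)$, while the evaluation map $C^{r+1}(\hat V,\R^n)\times\hat V\to\R^n$, $(g,\hat x)\mapsto g(\hat x)$, is $C^r$ (a standard property of the $C^k$-topology, proved inductively from continuity of $C^0$-evaluation and of $g\mapsto\partial_i g$). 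Since $\CC(\Omega)$ is open in the Banach space $C^{r+2}(\Omega)$, the implicit function theorem yields a neighborhood $W\ni f_0$ in $\CC(\Omega)$, a ball $\hat V_0\subset\hat V$ around $0$, and a $C^r$ map $\beta\colon W\to\hat V_0$ such that, for $(f,\hat x)\in W\times\hat V_0$, one has $F(f,\hat x)=0$ if and only if $\hat x=\beta(f)$.

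\emph{Conclusion.} Using the continuity proved above, shrink $W$ so that $\varphi(\bm_\Omega(f))\in\hat V_0$ for all $f\in W$. For such $f$, $\bm_\Omega(f)$ is an interior minimizer of $f$, so $F(f,\varphi(\bm_\Omega(f)))=0$, and uniqueness in the implicit function theorem forces $\varphi(\bm_\Omega(f))=\beta(f)$; hence $\bm_\Omega=\varphi^{-1}\circ\beta$ is $C^r$ on $W$. As $f_0$ was arbitrary, $\bm_\Omega$ is $C^r$. The only delicate point is (iii), the joint $C^r$-regularity of $F$, which reduces to the differentiability of the evaluation map on spaces of $C^{r+1}$-functions; the rest is the routine Banach implicit function theorem. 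Note also that, because the implicit function theorem only controls solutions near $(f_0,0)$, one genuinely needs the a priori continuity of $\bm_\Omega$ — itself a consequence of the uniqueness of the minimizer, via the $C^0$ functional $f\mapsto\min_\Omega f$ — to know that $\bm_\Omega(f)$ stays in the relevant chart.
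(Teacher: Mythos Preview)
The paper does not supply its own proof of this lemma; it is stated with a citation to \cite{AlvarezBarralCandel2016} and used as a black box. Your argument is correct and is the natural one: establish continuity of $\bm_\Omega$ by a compactness/uniqueness argument, then apply the Banach implicit function theorem to the map $F(f,\hat x)=\nabla(f\circ\varphi^{-1})(\hat x)$, using that the partial derivative in $\hat x$ at the minimizer is the (positive definite) Hessian. The verification that $F$ is jointly $C^r$ via the linearity of $f\mapsto d(f\circ\varphi^{-1})$ and the $C^r$-regularity of evaluation on $C^{r+1}$ is the right decomposition, and your observation that continuity of $\bm_\Omega$ is genuinely needed to localize the implicit solution is well taken. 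The $r=\infty$ case is slightly glossed over (the domain $\CC(\Omega)$ changes with $r$), but the standard remedy --- that the map on $C^\infty(\Omega)$ factors through each $C^{r+2}(\Omega)$ and is $C^r$ there for every finite $r$ --- is exactly what you indicate.
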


Suppose that $M$ is connected and complete.  Let $(A,\mu)$ be a probability space, $B$ a convex open ball of radius $r>0$ in $M$, and $f:A\to B$ a measurable map, which is called a \emph{mass distribution} on $B$. Consider the $C^\infty$ function $P_{f,\mu}:B\to\R$ defined by
\[
P_{f,\mu}(x)=\frac{1}{2}\,\int_Ad(x,f(a))^2\,\mu(a)\;.
\]

\begin{prop}[{H.~Karcher  \cite[Theorem~1.2]{Karcher1977}}]\label{p: Karcher}
We have the following:
\begin{enumerate}

\item\label{i: grad P_f} $\grad P_{f,\mu}$ is an outward pointing vector 
field on the boundary $\partial\ol B$.

\item\label{i: Hess P_f} If $\delta>0$ is an upper bound for the 
sectional curvatures of $M$ in $B$, and $2 r<\pi/2\sqrt{\delta}$, then 
$\Hess P_{f,\mu}$ is positive definite on $B$.

\end{enumerate}
\end{prop}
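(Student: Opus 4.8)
The plan is to reduce both assertions to pointwise formulas for $\grad P_{f,\mu}$ and $\Hess P_{f,\mu}$ obtained by differentiating under the integral sign, and then to apply the first variation formula for~\eqref{i: grad P_f} and a Hessian comparison estimate for~\eqref{i: Hess P_f}. \textbf{Setup.} First I would use that, since $\ol B$ is a convex ball, it lies within the injectivity radius of each of its points; hence for every $q\in\ol B$ the squared distance $\rho_q:=\tfrac12\,d(\cdot,q)^2$ is smooth near $\ol B$, with $\grad_x\rho_q=-\exp_x^{-1}(q)$, and the assignments $q\mapsto\exp_x^{-1}(q)$, $q\mapsto\Hess\rho_q(x)$ are continuous on the compact set $\ol B$. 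As $f$ is measurable and these integrands are bounded uniformly in $q\in\ol B$ and smooth in $x\in B$, dominated convergence permits differentiation under the integral sign, giving, for $x\in B$ and $v\in T_xM$,
\[
	\grad P_{f,\mu}(x)=-\int_A\exp_x^{-1}(f(a))\,\mu(a)\;,\qquad \Hess P_{f,\mu}(x)(v,v)=\int_A\Hess\rho_{f(a)}(x)(v,v)\,\mu(a)\;,
\]
with $\grad P_{f,\mu}$ extending continuously to $\ol B$.

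\textbf{Proof of~\eqref{i: grad P_f}.} Let $c$ and $r$ be the center and radius of $B$. At $x\in\partial\ol B$ the outward unit normal is $\nu(x)=\grad_x d(\cdot,c)=-\exp_x^{-1}(c)/r$, so by the formula above it suffices to prove $\langle\exp_x^{-1}(q),\exp_x^{-1}(c)\rangle>0$ for every $q\in B$; integrating over $A$ then gives $\langle\grad P_{f,\mu}(x),\nu(x)\rangle>0$. To this end, let $\gamma\colon[0,\ell]\to\ol B$ be the minimizing unit-speed geodesic from $x$ to $q$, which stays in $\ol B$ by convexity. Since $B$ has radius less than the convexity radius, $\rho_c$ is strictly convex, so $s\mapsto\rho_c(\gamma(s))$ is strictly convex on $[0,\ell]$; as $\rho_c(\gamma(0))=\tfrac12 r^2>\tfrac12 d(c,q)^2=\rho_c(\gamma(\ell))$, its derivative at $s=0$ is negative. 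By the first variation formula this derivative equals $-\langle\exp_x^{-1}(c),\exp_x^{-1}(q)\rangle/\ell$, whence the claim.

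\textbf{Proof of~\eqref{i: Hess P_f}.} Here I would write $\Hess\rho_q=d_q\,\Hess d_q+dd_q\otimes dd_q$ with $d_q:=d(\cdot,q)$. Fix $x\in B$ and $q=f(a)$, put $\ell=d(x,q)<2r$, and split $v=v_r+v_\perp$ with $v_r$ parallel to $\grad d_q$ at $x$. Then $(dd_q\otimes dd_q)(v,v)=|v_r|^2$, $\Hess d_q$ vanishes radially, and, since the geodesic from $q$ to $x$ stays in $B$ where the sectional curvature is $\le\delta$, the Hessian comparison theorem gives $\Hess d_q(x)(v_\perp,v_\perp)\ge\sqrt\delta\,\cot(\ell\sqrt\delta)\,|v_\perp|^2$; hence $\Hess\rho_q(x)(v,v)\ge|v_r|^2+\ell\sqrt\delta\,\cot(\ell\sqrt\delta)\,|v_\perp|^2$. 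The hypothesis $2r<\pi/2\sqrt\delta$ forces $\ell\sqrt\delta<\pi/2$, so $0<\ell\sqrt\delta\cot(\ell\sqrt\delta)\le1$, and since $t\mapsto t\cot t$ is decreasing on $(0,\pi/2)$ we get $\Hess\rho_q(x)(v,v)\ge c_0\,|v|^2$ with $c_0:=2r\sqrt\delta\,\cot(2r\sqrt\delta)>0$ independent of $q$, $v$. Integrating over $A$ yields $\Hess P_{f,\mu}(x)(v,v)\ge c_0\,|v|^2$, so $\Hess P_{f,\mu}$ is positive definite (indeed uniformly so) on $B$.

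\textbf{Main obstacle.} The crux is the Hessian comparison estimate for $d_q$ together with the verification that its hypotheses hold on $\ol B$: that $d_q$ is smooth there (no cut points) and that the relevant minimizing geodesics remain where the bound $K\le\delta$ applies. Both follow from $B$ being a convex ball within the convexity radius; once this is in hand, the remaining ingredients (the first variation formula, differentiation under the integral sign, and the elementary monotonicity of $t\cot t$) are routine.
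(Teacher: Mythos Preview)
The paper does not supply its own proof of this proposition: it is quoted verbatim from Karcher~\cite[Theorem~1.2]{Karcher1977} and used as a black box. Your write-up is essentially a reconstruction of Karcher's original argument---differentiate under the integral, use the first variation formula for the gradient, and the Hessian comparison theorem for the positivity of $\Hess P_{f,\mu}$---and is correct in outline.

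One point deserves care in part~\eqref{i: grad P_f}. You invoke that $\rho_c=\tfrac12 d(\cdot,c)^2$ is \emph{strictly convex} along geodesics in $\ol B$ merely because $B$ is a convex ball in the sense of the paper. That implication does not hold in general: convexity of $B$ (unique minimizing segments stay in $B$) does not by itself force $\Hess\rho_c>0$, which needs an upper curvature bound of the kind you only assume in part~\eqref{i: Hess P_f}. The conclusion $\langle\exp_x^{-1}(q),\exp_x^{-1}(c)\rangle>0$ is nevertheless true, and one can obtain it directly: convexity of $\ol B$ gives $d(c,\gamma(s))\le r=d(c,\gamma(0))$ for the minimizing geodesic $\gamma$ from $x$ to $q$, so the first variation yields $\langle\exp_x^{-1}(q),\exp_x^{-1}(c)\rangle\ge0$; strictness then follows because equality would make $\dot\gamma(0)$ tangent to the geodesic sphere $\partial B$, and a short second-variation (Gauss lemma) argument shows $\gamma$ would then exit $\ol B$ for small $s>0$, contradicting convexity. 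With that adjustment your argument goes through; part~\eqref{i: Hess P_f} is fine as written.
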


If the hypotheses of Proposition~\ref{p: Karcher} are satisfied, then 
$P_{f,\mu}\in\CC(\ol B)$, and therefore $\CC_{f,\mu}:=\bm_{\ol B}(P_{f,\mu})\in B$ is defined and called the \emph{center of 
mass} of $f$ (with respect to $\mu$). This point is independent of the choice of $B$ satisfying the above conditions. The following is a consequence of Lemma~\ref{l: bm_Omega is C^r}.

\begin{cor}[{\cite[Corollary~10.3]{AlvarezBarralCandel2016}; cf.\ \cite[Corollary~1.6]{Karcher1977}}]\label{c: CC_f,mu}
        The following properties hold:
        \begin{enumerate}

                \item $\CC_{f,\mu}$ depends continuously on $f$ and the metric tensor of $M$.

                \item If $A$ is the Borel $\sigma$-algebra of a metric space, then $\CC_{f,\mu}$ depends continuously on $\mu$ in the weak-$*$ topology.
                
	\end{enumerate}
\end{cor}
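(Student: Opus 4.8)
The strategy is to use the identity $\CC_{f,\mu}=\bm_{\ol B}(P_{f,\mu})$. By Lemma~\ref{l: bm_Omega is C^r}, the map $\bm_{\ol B}$ is $C^r$---in particular continuous---on the open subset $\CC(\ol B)$ of the Banach space $C^{r+2}(\ol B)$, and, by Proposition~\ref{p: Karcher}, $P_{f,\mu}\in\CC(\ol B)$. Hence, for a fixed metric on $M$, it is enough to check that $f\mapsto P_{f,\mu}$ and $\mu\mapsto P_{f,\mu}$ are continuous maps into $C^{r+2}(\ol B)$; the dependence on the metric tensor will need an extra observation, because $\bm_{\ol B}$ and its domain $\CC(\ol B)$ themselves vary with the metric.

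Fix coordinates on $M$ near $\ol B$, and write $\Phi(x,y)=\tfrac12\,d(x,y)^2$; since a convex ball contains no cut points, $\Phi$ is $C^\infty$ on $B\times B$, so every partial derivative $\partial^\alpha\Phi$ with respect to the coordinates of the first variable is continuous and bounded on $\ol B\times\ol B$. Differentiating under the integral sign (legitimate by these uniform bounds) gives $\partial^\alpha P_{f,\mu}(x)=\int_A\partial^\alpha\Phi(x,f(a))\,\mu(a)$. If $f_n\to f$, then $\partial^\alpha\Phi(x,f_n(a))\to\partial^\alpha\Phi(x,f(a))$ by uniform continuity of $\partial^\alpha\Phi$, and dominated convergence gives $\partial^\alpha P_{f_n,\mu}\to\partial^\alpha P_{f,\mu}$ uniformly on $\ol B$ for all $|\alpha|\le r+2$, i.e.\ $P_{f_n,\mu}\to P_{f,\mu}$ in $C^{r+2}(\ol B)$. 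For the $\mu$-dependence, the integrand $a\mapsto\partial^\alpha\Phi(x,f(a))$ is bounded and continuous (continuous since $f$ is), so $\mu_n\to\mu$ in the weak-$*$ sense gives pointwise convergence of $\partial^\alpha P_{f,\mu_n}(x)$; as the functions $\partial^\alpha P_{f,\mu_n}$ are uniformly Lipschitz, hence equicontinuous, this upgrades to uniform convergence on $\ol B$, i.e.\ convergence in $C^{r+2}(\ol B)$. Composing with $\bm_{\ol B}$ settles the continuity of $\CC_{f,\mu}$ in $f$ and in $\mu$.

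For the metric tensor $g$, the crucial point is that the \emph{value} $\bm_{\ol B}(h)$ equals the unique critical point of $h$ in $\mathring B$---uniqueness holds because $\Hess h$ is positive along the minimizing geodesics, which remain inside the convex ball $\ol B$---and this characterization does not mention $g$: it is the unique zero in $\mathring B$ of $dh$. Consequently $h\mapsto\bm_{\ol B}(h)$ extends, by the inverse function theorem, to a $C^r$ map on the open set of functions in $C^{r+2}(\ol B)$ having one nondegenerate critical point in $\mathring B$ and none on $\partial\ol B$; by Proposition~\ref{p: Karcher}, $\CC_g(\ol B)$ lies in this open set for every metric $g$ satisfying the curvature bound $\delta$ and the radius bound $2r<\pi/2\sqrt\delta$, both of which are open conditions on $g$. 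Since $d_g(x,y)^2$ depends continuously on $g$ on the convex ball (which stays free of conjugate points for metrics near the given one), $P^g_{f,\mu}$ depends continuously on $g$ in $C^{r+2}(\ol B)$, and composing yields continuity of $\CC_{f,\mu}$ in $(f,g)$. Disentangling this double dependence on the metric---of $P_{f,\mu}$, of $\bm_{\ol B}$, and of its domain $\CC(\ol B)$---is the only real obstacle; the rest reduces to smoothness of $d(x,y)^2$ on a convex ball together with standard limit theorems for integrals.
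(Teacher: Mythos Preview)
Your proof is correct and follows exactly the route the paper indicates: the paper gives no proof of its own here, merely citing \cite[Corollary~10.3]{AlvarezBarralCandel2016} and remarking that it is a consequence of Lemma~\ref{l: bm_Omega is C^r}. Your sketch is a faithful and careful unpacking of that one-line indication, including the one genuinely delicate point---that $\bm_{\ol B}$ and its domain $\CC(\ol B)$ vary with the metric---which you handle correctly by observing that the value $\bm_{\ol B}(h)$ is characterized metric-independently as the unique zero of $dh$ in $\mathring B$.
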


Consider the following particular case. Let $N$ be a $C^\infty$ manifold, $\phi=(\phi_1,\dots,\phi_k):N\to M^k$ a $C^\infty$ map, and $\lambda=(\lambda_1,\dots,\lambda_k)$ a finite $C^\infty$ partition of unity of $N$. For every $x\in N$, consider the probability measure $\mu_{\phi,\lambda,x}=\sum_{i=1}^k\lambda_i(x)\,\delta_{\phi_i(x)}$, where $\delta_y$ denotes the Dirac mass at every $y\in M$. Suppose that, for all $x\in N$, the points $\phi_1(x),\dots,\phi_k(x)$ lie in a ball $B_x$ of $M$ satisfying the conditions of Proposition~\ref{p: Karcher}. Then we can define center of mass $\CC_{\phi,\lambda,x}$ of $\id_{B_x}$ with respect to $\mu_{\phi,\lambda,x}$, which is independent of the choice of $B_x$. The following sharpening of Corollary~\ref{c: CC_f,mu} also follows from Lemma~\ref{l: bm_Omega is C^r}.

\begin{cor}\label{c: CC_bar lambda,bar x}
        The map $N\to M$, $x\mapsto\CC_{\phi,\lambda,x}$, is $C^\infty$.
\end{cor}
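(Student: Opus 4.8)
The plan is to realize the assignment $\bar x\mapsto P_{\id_B,\mu_{\bar\lambda,\bar x}}$ as a $C^\infty$ curve of functions in the Banach space $C^{r+2}(\ol B)$ landing in the open subset $\CC(\ol B)$, and then to post-compose with $\bm_{\ol B}$. Writing out the integral, $P_{\id_B,\mu_{\bar\lambda,\bar x}}(y)=\tfrac12\sum_{i=1}^k\lambda_i(x_i)\,d(y,x_i)^2$ for $\bar x=(x_1,\dots,x_k)\in B^k$ and $y\in\ol B$. Each such function lies in $\CC(\ol B)$ by Proposition~\ref{p: Karcher} (this is already implicit in the very definition of $\CC_{\bar\lambda,\bar x}$), so by Lemma~\ref{l: bm_Omega is C^r} the composite $\bar x\mapsto\CC_{\bar\lambda,\bar x}=\bm_{\ol B}\bigl(P_{\id_B,\mu_{\bar\lambda,\bar x}}\bigr)$ will be $C^r$ as soon as the curve $\bar x\mapsto P_{\id_B,\mu_{\bar\lambda,\bar x}}$ is shown to be $C^\infty$ into $C^{r+2}(\ol B)$; since this can be arranged for every finite $r$ while the composite map $B^k\to B$ does not depend on $r$, it will then be $C^\infty$, as desired.

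First I would fix a finite $r\in\Z^{\ge0}$ and check that $x\mapsto\tfrac12 d(\,\cdot\,,x)^2$ is a $C^\infty$ map $B\to C^{r+2}(\ol B)$. Shrinking $B$ if necessary (using the relation between convexity and injectivity radii invoked in Section~\ref{ss: foliated sps}), I may assume that $\ol B$ sits inside a convex ball on which the map $(x,v)\mapsto(x,\exp_x v)$ is a diffeomorphism from a neighborhood of the zero section onto a neighborhood of the diagonal; then $(y,x)\mapsto\exp_x^{-1}(y)$ is $C^\infty$ on $\ol B\times B$, and hence so is $F(y,x):=\tfrac12 d(y,x)^2=\tfrac12\langle\exp_x^{-1}(y),\exp_x^{-1}(y)\rangle_x$. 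Since $\ol B$ is compact (Hopf--Rinow), the standard argument that joint smoothness of $F$ on $\Omega\times(\text{open})$ forces $x\mapsto F(\,\cdot\,,x)$ to be $C^\infty$ in the $C^{r+2}(\Omega)$-norm---its $j$-th derivative being the function $y\mapsto D^j_x F(y,x)$, which lies in $C^{r+2}(\ol B)$ and varies continuously with $x$---gives the claim.

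Next I would assemble $\bar x\mapsto P_{\id_B,\mu_{\bar\lambda,\bar x}}=\sum_{i=1}^k\lambda_i(x_i)\,F(\,\cdot\,,x_i)$ out of standard smooth operations: the $C^\infty$ maps $x_i\mapsto F(\,\cdot\,,x_i)$ of the previous step, the $C^\infty$ real functions $\lambda_i$ on $B$, the continuous (hence $C^\infty$) bilinear scalar multiplication $\R\times C^{r+2}(\ol B)\to C^{r+2}(\ol B)$, and addition in $C^{r+2}(\ol B)$. Thus this map is $C^\infty$ into $C^{r+2}(\ol B)$, and since each of its values lies in $\CC(\ol B)$, which is open in $C^{r+2}(\ol B)$, it corestricts to a $C^\infty$ map $B^k\to\CC(\ol B)$. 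Composing with the $C^r$ map $\bm_{\ol B}\colon\CC(\ol B)\to B$ (Lemma~\ref{l: bm_Omega is C^r}) and letting $r$ range over $\Z^{\ge0}$ completes the argument.

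The step that needs genuine care is the parametrized smoothness of the squared distance function in the second paragraph: this is where the convexity of $B$ (so that no cut locus intervenes) and the smooth dependence of geodesics on their endpoints enter, and it is what upgrades the mere continuity of Corollary~\ref{c: CC_f,mu} to $C^\infty$ regularity here. Everything else is formal calculus of smooth maps into Banach spaces combined with Lemma~\ref{l: bm_Omega is C^r} and Proposition~\ref{p: Karcher}.
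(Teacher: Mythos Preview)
Your proof is correct and is exactly the natural expansion of the paper's one-line justification (``also follows from Lemma~\ref{l: bm_Omega is C^r}''): factor $\bar x\mapsto\CC_{\bar\lambda,\bar x}$ through the $C^\infty$ map $\bar x\mapsto P_{\id_B,\mu_{\bar\lambda,\bar x}}\in\CC(\ol B)$ and the $C^r$ map $\bm_{\ol B}$, then let $r\to\infty$. One small remark: you cannot ``shrink $B$'', since $B$ is fixed in the statement, but you never need to---the standing hypotheses that $B$ is convex with $2r<\pi/(2\sqrt\delta)$ already place $\ol B$ well inside a normal neighborhood, so $(y,x)\mapsto d(y,x)^2$ is automatically smooth on $\ol B\times B$.
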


\section{Molino's description}\label{s: Molino}

Consider the notation of Section~\ref{ss: foliated sps} in the rest of the paper. 

\begin{proof}[Proof of Theorem~\ref{mt: Molino}]
	Most of the properties stated in this theorem were already proved in \cite[Theorem~A]{AlvarezMoreira2016}. It only remains to prove the part concerning $H$.  For this purpose, we have to recall the construction of $G$, $\widehat X_0$, $\widehat\FF_0$ and $\hat\pi_0$. We can assume that $X$ satisfies the conditions of equicontinuity and strong quasi-analyticity with the same set $S$, and that $\ol\HH$ satisfies the conditions of equicontinuity and strong quasi-analyticity with the induced set $\ol S$. Let $\ol S_{\text{\rm c-o}}$ be the space $\ol S$ with the restriction of the compact-open topology on the set of partial maps $T\rightarrowtail T$ with open domain \cite{Abd-AllahBrown1980}. Consider the subspace
		\[
			\ol S_{\text{\rm c-o}}*T=\{\,(g,u)\in\ol S\times T\mid u\in\dom g\,\}\subset\ol S_{\text{\rm c-o}}\times T\;,
		\]
	and equip the set $\widehat T$ of all germs of maps in $\ol S$ (or $\ol\HH$) with the final topology induced by the germ map $\germ:\ol S_{\text{\rm c-o}}*T\to\widehat T$ (this is not the restriction of the sheaf topology). Consider the restrictions $s,t:\widehat T\to T$ of the source and target maps. The space $\widehat T$ is locally compact and Polish, and $\hat\pi:=(s,t):\widehat T\to T\times T$ is continuous and proper.
	
	Fix some point $u_0\in T_{i_0}\subset T$. Then the subspace $\widehat T_0:=s^{-1}(u_0)\subset\widehat T$ is locally compact and Polish. This definition is different from the one given in \cite[Section~3D]{AlvarezMoreira2016}, where $\widehat T_0=t^{-1}(u_0)$ was considered. This change can be made because the inversion of local transformations defines a homeomorphism of $\ol S_{\text{\rm c-o}}$ \cite[Proposition~3.1]{AlvarezMoreira2016}, and therefore the germ inversion defines a homeomorphism of $\widehat T$, which becomes a topological groupoid by \cite[Proposition~10]{Abd-AllahBrown1980}. The rest of definitions and arguments of \cite[Sections~3D--3G]{AlvarezMoreira2016} must be changed accordingly. For instance, take $\hat\pi_0=t:\widehat T_0\to T$ (instead of $\hat\pi_0=s$, used in \cite{AlvarezMoreira2016}), which is open, continuous and proper, and its fibers are homeomorphic to each other \cite[Section~3D]{AlvarezMoreira2016}. We have $\widehat T_0\equiv\bigsqcup_i\widehat T_{i,0}$, where $\widehat T_{i,0}=\hat\pi_0^{-1}(T_i)$.

	Note that $H:=\hat\pi_0^{-1}(u_0)=\hat\pi^{-1}(u_0,u_0)$ becomes a compact Polish group since $\widehat T$ is a topological groupoid. Moreover the germ product defines a continuous free right action of $H$ on $\widehat T_0$ whose orbits are clearly equal to the fibers of $\hat\pi_0:\widehat T_0\to T$. Thus this map induces a continuous bijection $\widehat T_0/H\to T$. In fact this bijection is a homeomorphism, as easily follows by using also that $H$ is compact, $\widehat T_0$ is locally compact, and $T$ is Hausdorff.

	For any $h\in\HH$, define $\hat h:\hat\pi_0^{-1}(\dom h)\rightarrow\hat\pi_0^{-1}(\im h)$ by $\hat h(\germ(g,u_0))=\germ(hg,u_0)$ for $g\in\ol S$ with $u_0\in\dom g$ and $g(u_0)\in\dom h$ (instead of $\hat h(\germ(g,u))=\germ(gh^{-1},h(u))$ for $u\in\dom g\cap\dom h$ with $g(u)=u_0$, used in \cite{AlvarezMoreira2016}). The maps $\hat h$ are local transformations of $\widehat T_0$ satisfying $h\hat\pi_0=\hat\pi_0\hat h$, $\widehat{\id_T}=\id_{\widehat T_0}$, $\widehat{hh'}=\hat h\widehat{h'}$ and ${\hat h}^{-1}=\widehat{h^{-1}}$ \cite[Sections~3E]{AlvarezMoreira2016}. Moreover it is easy to see that every $\hat h$ is $H$-equivariant (note that $\dom\hat h$ and $\im\hat h$ are $H$-invariant). Let $\widehat\HH_0$ be the pseudogroup on $\widehat T_0$ generated by $\widehat S_0=\{\,\hat h\mid h\in S\,\}$. There is a local group $G$ and some dense finitely generated sublocal group $\Gamma \subset  G$ such that $\widehat{\HH}_0$ is equivalent to the pseudogroup $\GG$ generated by the local action of $\Gamma$ on $G$ by local left translations \cite[Proposition~3.41]{AlvarezMoreira2016}---this was proved by checking that $\widehat\HH_0$ is compactly generated, equicontinuous and strongly locally free, and its closure is also strongly locally free, and then applying Proposition~\ref{p: list, G}-\eqref{i: G}. Furthermore $\hat\pi_0$ generates a morphism $\widehat\HH_0\to\HH$.
	
	Let $\check U_{i,0}=U_i \times\widehat T_{i,0}\times \{i\}\equiv U_i \times\widehat T_{i,0}$, equipped with the product topology, and consider the topological sum
		\[
  			\check X_0:=\bigsqcup_i (U_i \times\widehat T_{i,0})=\bigcup_i\check U_{i,0}\;, 
		\]
and the closed subspaces
		\[
  			\widetilde U_{i,0}:=\{\,(x,\gamma,i)\in\check U_{i,0} \mid p_i(x)=\hat\pi_0(\gamma)\,\}\subset\check U_{i,0}\;,\quad
			\widetilde X_0:=\bigcup_i\widetilde U_{i,0}\subset\check X_0\;.
		\]
	Note that $\widetilde X_0$ is the topological sum of the spaces $\widetilde U_{i,0}$. Consider the equivalence relation ``$\sim$'' on $\widetilde X_0$ defined by $(x,\gamma,i)\sim(y,\delta,j)$ if $x=y$ and $\gamma= \widehat{h_{ji}}(\delta)$. Let $\widehat X_0$ be the corresponding quotient space, let \(q:\widetilde X_0\rightarrow \widehat X_0\) be the quotient map, let $[x,\gamma,i]=q(x,\gamma,i)$, let $\widehat U_{i,0}=q(\widetilde U_{i,0})$, and let $\tilde{p}_{i,0}:\widetilde U_{i,0}\rightarrow\widehat T_{i,0}$ denote the restriction of $\check{p}_{i,0}:\check U_{i,0}\equiv U_i\times\widehat T_{i,0}\rightarrow\widehat T_{i,0}$, which induces a map $\hat p_{i,0}:\widehat U_{i,0}\rightarrow\widehat T_{i,0}$. Moreover a map $\hat\pi_0:\widehat X_0\rightarrow X$ is defined by $\hat\pi_0([x,\gamma,i])=x$. Observe that $\widehat U_{i,0}=\hat\pi_0^{-1}(U_i)$. Then $\widehat X_0$ is compact and Polish, $\{\widehat U_{i,0},\hat p_{i,0},\widehat{h_{ij}}\}$ is a defining cocycle of a minimal foliated structure $\widehat\FF_0$ on $\widehat X_0$, $\hat\pi_0$ is continuous and open, the fibers of $\hat\pi_0$ are homeomorphic to each other, and the restriction of $\hat\pi_0$ to the leaves of $\widehat X_0$ are the holonomy coverings of the leaves of $X$ \cite[Section~4B]{AlvarezMoreira2016}. In the proof of these properties, it was used that every restriction $q:\widetilde U_{i,0}\rightarrow \widehat U_{i,0}$ is a homeomorphism.
	
	Since every $\widehat T_{i,0}$ is $H$-invariant, we get an induced free right action of $H$ on every $\check U_{i,0}\equiv U_i \times\widehat T_{i,0}$, acting as the identity on the factor $U_i$, yielding a right $H$-action on $\check X_0$ by union. This restricts to a free right action of $H$ on $\widetilde X_0$, preserving every $\widetilde U_{i,0}$, because the $H$-orbits in $\widehat T_0$ are equal to the fibers $\hat\pi_0:\widehat T_0\to T$. Since moreover every $\widehat{h_{ij}}$ is $H$-equivariant, we get an induced right action on $\widehat X_0$, given by $[x,\gamma,i]\cdot\sigma=[x,\gamma\sigma,i]$ for $[x,\gamma,i]\in\widehat X_0$ and $\sigma\in H$. This action is also free because every restriction $q:\widetilde U_{i,0}\rightarrow \widehat U_{i,0}$ is a homeomorphism, and it is easy to see that its orbits equal the fibers of $\hat\pi_0:\widehat X_0\to X$. Finally note that every map $\hat p_{i,0}:\widehat U_{i,0}\rightarrow\widehat T_{i,0}$ is $H$-equivariant, and therefore $H$ acts on $\widehat X_0$ by foliated transformations.
\end{proof}

In the rest of this section, assume that $X$ satisfies the hypotheses of Theorem~\ref{mt: Molino}. Consider structures $(G,\Gamma,H,\widehat X_0,\hat\pi_0)$ satisfying the properties of its statement, considering $\widehat X_0$ as a foliated space and $H$-space, and $\Gamma$ is a finitely generated dense sublocal group $G$ so that the holonomy pseudogroup of $\widehat X_0$ is represented by the pseudogroup generated by the left local action of $\Gamma$ on $G$ by local left translations. 

It is said that two such structures, $(G,\Gamma,H,\widehat X_0,\hat\pi_0)$ and $(G',\Gamma',H',\widehat X'_0,\hat\pi'_0)$, are {\em equivalent\/} if there are a local isomorphism $\psi:G\rightarrowtail G'$ that restricts to a local isomorphism $\Gamma\rightarrowtail\Gamma'$, an isomorphism $\chi:H\to H'$, and a foliated $\chi$-equivariant homeomorphism $\phi:\widehat X_0\to\widehat X'_0$ such that $\hat\pi_0=\hat\pi'_0\phi$. In this case, $(\psi,\chi,\phi)$ is called an {\em equivalence\/}. This notion of equivalence is natural because it clearly means that the descriptions of the foliated space $X$ given by $(G,\Gamma,H,\widehat X_0,\hat\pi_0)$ and $(G',\Gamma',H',\widehat X'_0,\hat\pi'_0)$ are essentially the same, giving rise to equivalent invariants of $X$. For instance, $G$, $\Gamma$ and $H$ have the same algebraic and topological properties as $G'$, $\Gamma'$ and $H'$, and $\hat\pi_0$ is a principal bundle projection if and only if $\hat\pi'_0$ is also a principal bundle projection. 

The role of $\Gamma$ in the above structures is very important. Two foliated spaces satisfying the conditions of Theorem~\ref{mt: Molino} may have the same invariants $G$ and $H$, but different invariant $\Gamma$, and therefore their transverse dynamics may be quite different. In the foliated homogeneous case ($H=0$), this is well known for Lie foliations; for instance, all minimal Lie foliations of codimension one on tori have $G=\R$, but the rank of $\Gamma$ depends on the dimension of the tori. Homogeneous matchbox manifolds with the same $G$ and different $\Gamma$ can be constructed as suspensions (Section~\ref{ss: suspensions}), using the existence of non-isomorphic finitely presented residually finite groups with isomorphic profinite completions \cite{BridsonGrunewald2004,Nekrashevych2007}.

\begin{prop}[{Cf.~\cite[Propositions~3.43,~4.12 and~4.13]{AlvarezMoreira2016}}]\label{p: equivalent Molino}
	All structures $(G,\Gamma,H,\widehat X_0,\hat\pi_0)$ constructed in the proof of Theorem~\ref{mt: Molino} are equivalent.
\end{prop}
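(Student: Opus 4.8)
The plan is to deduce the proposition from the corresponding statements that do not involve the group $H$ --- namely \cite[Propositions~3.43,~4.12 and~4.13]{AlvarezMoreira2016}, transcribed into the modified construction carried out in the proof of Theorem~\ref{mt: Molino} --- and to supplement them with the compatibility with the $H$-actions, which is the only genuinely new ingredient. The guiding observation is that, in the present conventions, $\widehat T$ is a topological groupoid with unit space $T$, source and target maps $s,t$, and germ composition as multiplication; moreover $\widehat T_0=s^{-1}(u_0)$, $\hat\pi_0=t|_{\widehat T_0}$, the subgroup $H=\hat\pi^{-1}(u_0,u_0)=s^{-1}(u_0)\cap t^{-1}(u_0)$ is precisely the isotropy group of $\widehat T$ at $u_0$, every generator $\widehat h$ of $\widehat\HH_0$ acts on $\widehat T_0$ by post-composition of germs, and the right $H$-action on $\widehat T_0$ is exactly the restriction of the groupoid multiplication (pre-composition of germs). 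Hence any identification of $\widehat T$ with the analogous $\widehat T'$ that respects the groupoid structures automatically restricts to an isomorphism $H\to H'$ and is equivariant over it. So the task is to produce, for any two admissible sets of choices, such a groupoid isomorphism and to follow it through the gluing of \cite[Section~4B]{AlvarezMoreira2016} that builds $\widehat X_0$ out of $(\widehat T,\{\widehat{h_{ij}}\})$.

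To organise this I would reduce an arbitrary change of choices to two elementary ones, using that equivalence of these structures is visibly a transitive relation. First, keep the regular atlas $\UU$ (hence $T$, $\HH$, $\ol\HH$ and $\widehat T$), the generating set $S$ and the pair $(G,\Gamma)$ fixed, and move only the base point from $u_0$ to $u_1$. Since $\ol\HH$ is transitive (recall that $\hat\pi_0$ is onto $T$), choose $h\in\ol\HH$ with $h(u_0)=u_1$ and put $\alpha=\germ(h,u_0)\in\widehat T$; then right translation $\beta\mapsto\beta\alpha^{-1}$ by the fixed germ $\alpha^{-1}$ is a bijection $\widehat T_0\to\widehat T_1$ --- a homeomorphism for the non-sheaf topology, by the compactness and properness arguments of \cite[Sections~3D--3G]{AlvarezMoreira2016} --- which commutes with every $\widehat{h_{ij}}$ and is equivariant for the $H$-actions over the conjugation isomorphism $\chi\colon\sigma\mapsto\alpha\sigma\alpha^{-1}$; since it intertwines the two defining cocycles it descends through the gluing to a foliated $\chi$-equivariant homeomorphism $\widehat X_0\to\widehat X_1$ over $X$. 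Second, keep $X$ and change the regular atlas from $\UU$ to $\UU'$: both yield representatives of the holonomy pseudogroup of $X$, so there is an equivalence $\Phi\colon\HH\to\HH'$, which extends to an equivalence of the closures $\ol\HH\to\ol\HH'$ (using Proposition~\ref{p: list, HH}-\eqref{i: ol HH}, as in \cite{AlvarezMoreira2016}) and therefore induces a groupoid isomorphism $\widehat T\to\widehat T'$; taking the base point $\phi(u_0)$ on the $\UU'$-side for some $\phi\in\Phi$ with $u_0\in\dom\phi$, this restricts to an isomorphism $H\to H'$ and, by functoriality of the construction of $\widehat X_0$, to the required equivalence. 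An arbitrary pair of choices is then handled by composing an equivalence of the second type (to the $\UU'$-structure based at $\phi(u_0)$) with one of the first type (to the prescribed base point). Finally, the dependence on $S$ is vacuous, because $\widehat T$, $\widehat\HH_0$, $\hat\pi_0$ and $H$ only involve $\ol\HH$, which is canonically attached to $X$ by Proposition~\ref{p: list, HH}-\eqref{i: ol HH}; and, given $\widehat X_0$, the pair $(G,\Gamma)$ is determined up to a local isomorphism restricting $\Gamma$ to $\Gamma'$ by Proposition~\ref{p: list, G}-\eqref{i: G >--> G'} (cf.\ \cite[Proposition~3.43]{AlvarezMoreira2016}).

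I expect the main obstacle, exactly as in \cite{AlvarezMoreira2016}, to be the continuity assertions for the non-sheaf topology on $\widehat T$: that the morphism induced by $\ol\Phi$ is a \emph{homeomorphism} $\widehat T\to\widehat T'$, and that right translation by a fixed germ is a \emph{homeomorphism} $\widehat T_0\to\widehat T_1$. These rest on the properness of $\hat\pi=(s,t)$ and the compactness of its fibres proved in the proof of Theorem~\ref{mt: Molino}, and I would obtain them by transcribing \cite[Sections~3D--3G and~4B]{AlvarezMoreira2016}; once they are available, the $H$-equivariance is a formality, the $H$-action being a restriction of the groupoid multiplication that all the identifications respect. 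It then only remains to record that every resulting $\chi$ is a topological group isomorphism, which is automatic since a continuous bijective homomorphism between compact groups is a homeomorphism.
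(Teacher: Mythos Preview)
Your plan is correct and follows essentially the same route as the paper: reduce to the three elementary moves (change of base point, change of generating set $S$, change of atlas), invoke \cite[Propositions~3.43,~4.12,~4.13]{AlvarezMoreira2016} for the parts not involving $H$, and then verify $H$-equivariance. Your groupoid framing---$H$ is the isotropy group of $\widehat T$ at $u_0$, the $H$-action is groupoid right multiplication, and the transition maps are right translation by a fixed germ or conjugation by germs of an equivalence---is exactly what underlies the paper's explicit formulas ($\theta(\germ(f,u_0))=\germ(ff_0^{-1},u_1)$, $\chi(\sigma)=\germ(f_0,u_0)\,\sigma\,\germ(f_0,u_0)^{-1}$, $\hat\phi_{i,0}(\germ(g,u_0))=\germ(\phi_i g\phi_{i_0}^{-1},u'_0)$), and it makes the $\chi$-equivariance transparent. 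Two small points of care: the phrase ``groupoid isomorphism $\widehat T\to\widehat T'$'' is slightly loose, since a pseudogroup equivalence gives only a Morita equivalence of germ groupoids---the paper resolves this by passing to a common refinement and writing down the explicit conjugation maps $\hat\phi_{i,0}$, which is what your choice of a specific $\phi\in\Phi$ with $u_0\in\dom\phi$ amounts to; and the independence of $S$ is not quite vacuous a priori (the topology on $\widehat T$ is defined via $\ol S_{\text{c-o}}$), but \cite[Proposition~3.43]{AlvarezMoreira2016}, which you cite, is precisely the statement that the resulting space does not depend on $S$.
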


\begin{proof}
	We have to prove that the equivalence class of $(G,\Gamma,H,\widehat X_0,\hat\pi_0)$ is independent of the choices of $u_0$, $S$ and $\{U_i,p_i,h_{ij}\}$. Most of this is already proved in \cite[Propositions~3.43,~4.12 and~4.13]{AlvarezMoreira2016}. We only have to check what concerns $H$.
	
	To begin with, take another point of $u_1\in T_{i_1}\subset T$, and let $\widehat T_1$, $\hat\pi_1$, $\widehat S_1$, $\widehat{\HH}_1$, $G_1$, $\Gamma_1$ and $H_1$ be constructed like $\widehat T_0$, $\hat\pi_0$, $\widehat S_0$, $\widehat{\HH}_0$, $G_0:=G$, $\Gamma_0:=\Gamma$ and $H_0:=H$ by using $u_1$ instead of $u_0$. Now, for each $h\in\HH$, let us use the notation $\hat h_0:=\hat h\in\widehat\HH_0$, and let $\hat h_1:\hat\pi_1^{-1}(\dom h)\to\hat\pi_1^{-1}(\im h)$ be the map in $\widehat\HH_1$ defined like $\hat h$. In particular, the maps $(\widehat{h_{ij}})_1$ are defined like the maps $(\widehat{h_{ij}})_0:=\widehat{h_{ij}}$. There is some $f_0\in \overline{S}$ such that $u_0\in \dom f_0$ and $f_0(u_0)=u_1$. Let $\theta:\widehat T_0\rightarrow \widehat T_1$ be defined by $\theta(\germ(f,u_0))= \germ(ff_0^{-1},u_1)$ (instead of $\theta(\germ(f,x))= \germ(f_0f,x)$, like in \cite{AlvarezMoreira2016}). This map is a homeomorphism, and satisfies $\hat\pi_0=\hat\pi_1\theta$, $\dom \hat h_1= \theta(\dom \hat h_0)$ and $\hat h_1\theta=\theta\hat h_0$ for all $h\in S$, obtaining that $\theta$ generates an equivalence $\Theta:\widehat{\HH}_0\rightarrow \widehat{\HH}_1$ \cite[Proposition~3.42]{AlvarezMoreira2016}. For $k=0,1$, let $\GG_k$ be the pseudogroup on $G_k$ generated by local left translations by elements of $\Gamma_k$. Via equivalences $\widehat{\HH}_k\to\GG_k$, $\Theta$ corresponds to an equivalence $\Theta':\GG_0\to\GG_1$. Since the local right translations of $G_1$ generate equivalences of $\GG_1$, we can assume that the orbits of the identity elements correspond by the induced map $G_0/\GG_0\to G_1/\GG_1$. By Proposition~\ref{p: list, G}-\eqref{i: G >--> G'}, it follows that $\Theta'$ is generated by a local isomorphism $\psi:G_0\rightarrowtail G_1$ that restricts to a local isomorphism $\Gamma\rightarrowtail\Gamma'$. On the other hand, the conjugation mapping, $\germ(f,u_0)\mapsto\germ(f_0ff_0^{-1},u_1)$, defines an isomorphism $\chi:H_0\to H_1$ so that $\theta$ is $\chi$-equivariant.
	
	Now, define $\widehat X_1\equiv(\widehat X_1,\widehat\FF_1)$, $[x,\gamma,i]_1$ and $\hat\pi_1:\widehat X_1\to X$ like $\widehat X_0\equiv(\widehat X_0,\widehat\FF_0)$, $[x,\gamma,i]_0:=[x,\gamma,i]$ and $\hat\pi_0:\widehat X_0\to X$, using $\widehat T_1$, $\hat\pi_1:\widehat T_1\to T$ and the maps $(\widehat{h_{ij}})_1$ instead of $\widehat T_0$, $\hat\pi_0:\widehat T_0\to T$ and the maps $(\widehat{h_{ij}})_0$. According to \cite[Proposition~4.12]{AlvarezMoreira2016}, a foliated homeomorphism $\phi:\widehat X_0\to\widehat X_1$ is defined by $\phi([x,\gamma,i]_0)=[x,\theta(\gamma),i]_1$, which satisfies $\hat\pi_0=\hat\pi_1\phi$ and induces the equivalence $\Theta:\widehat\HH_0\to\widehat\HH_1$. Moreover $\phi$ is $\chi$-equivariant: for all $[x,\gamma,i]_0\in\widehat X_0$ and $\sigma\in H_0$,
		\begin{align*}
			\phi([x,\gamma,i]_0\cdot\sigma)&=\phi([x,\gamma\sigma,i]_0)=[x,\theta(\gamma\sigma),i]_1\\
			&=[x,\theta(\gamma)\chi(\sigma),i]_1=[x,\theta(\gamma),i]_1\cdot\chi(\sigma)\;.
		\end{align*}
		
	All choices of $S$ define the same space $\widehat T_0$ by \cite[Propositions~3.43]{AlvarezMoreira2016}, giving rise to the same Molino's description.
	
	To prove the independence of $\{U_i,p_i,h_{ij}\}$, it is enough to consider the case where $\{U_i,p_i,h_{ij}\}$ refines another defining cocycle $\{U'_a,p'_a,h'_{ab}\}$. Let $\HH'$ be the corresponding representative of the holonomy pseudogroup on $T'=\bigsqcup_aT'_a$. If $U_i\subset U'_{a_i}$, there is an induced open embedding $\phi_i:T_i\to T'_{a_i}$. These maps generate an equivalence $\Phi:\HH\to\HH'$. In fact,  $h'_{a_ia_j}\phi_j=\phi_ih_{ij}$. Let $u'_0=\phi_{i_0}(u_0)\in T'_{a_{i_0}}\subset T'$, and let $S'\subset\HH'$ be a generating subset such that $S^{\prime2}\subset S'=S^{\prime-1}$. We can also use $\{U'_a,p'_a,h'_{ab}\}$, $u'_0$ and $S'$ to define $\widehat T'_0$, $\hat\pi'_0:\widehat T'_0\to T'$ and $\widehat\HH'_0$ like $\widehat T_0$, $\hat\pi_0:\widehat T_0\to T$ and $\widehat\HH_0$; in particular, the generators $\widehat{h'_{ab}}$ of $\widehat\HH'_0$ are defined like the generators $\widehat{h_{ij}}$ of $\widehat\HH_0$.  We get open embeddings $\hat\phi_{i,0}:\widehat T_{i,0}\to\widehat T'_{a_i,0}$ defined by $\hat\phi_{i,0}(\germ(g,u_0))=\germ(\phi_ig\phi_{i_0}^{-1},u'_0)$, which generate an equivalence $\widehat\Phi_0:\widehat\HH_0\to\widehat\HH'_0$ (this is a corrected version of \cite[Proposition~3.44]{AlvarezMoreira2016}). Let $(G',\Gamma',H',\widehat X'_0,\hat\pi'_0)$ be the Molino's description defined with $\widehat T'_0$, $\hat\pi'_0:\widehat T'_0\to T'$ and the maps $\widehat{h'_{ab}}$. Let us use the notation $[x,\gamma',a]'$ for the element of $\widehat X'_0$ represented by a tern $(x,\gamma',a)$. Let $\GG$ and $\GG'$ be the pseudogroups on $G$ and $G'$ generated by the local left translations by elements of $\Gamma$ and $\Gamma'$. Via equivalences $\HH\to\GG$ and $\HH'\to\GG'$, $\widehat\Phi_0$ corresponds to an equivalence $\widehat\Phi'_0:\GG\to\GG'$. As above, we can assume that the orbits of the identity elements correspond by the induced map $G/\GG\to G'/\GG'$, and therefore, according to Proposition~\ref{p: list, G}-\eqref{i: G >--> G'}, $\widehat\Phi'_0$ is generated by a local isomorphism $\psi:G\rightarrowtail G'$ that restricts to a local isomorphism $\Gamma\rightarrowtail\Gamma'$. Moreover $\hat\phi_{i_0,0}$ restricts to an isomorphism $\chi:H\to H'$ so that any map in $\widehat\Phi_0$ is $\chi$-equivariant. Finally, a canonical foliated homeomorphism $\phi:\widehat X_0\to\widehat X'_0$ is well defined by $\phi([x,\gamma,i])=[x,\hat\phi_{i,0}(\gamma),a_i]'$ \cite[Proposition~4.13]{AlvarezMoreira2016}. It is easy to check that $\phi$ is $H$-equivariant. 
\end{proof}

By Proposition~\ref{p: equivalent Molino}, the equivalence class of any structure $(G,\Gamma,H,\widehat X_0,\hat\pi_0)$ constructed in the proof of Theorem~\ref{mt: Molino} can be called the {\em Molino's description\/} of $X$. According to the discussion of \cite[Section~1.E]{AlvarezMoreira2016}, these structures are kind of a topological interpretation of the original Molino's description in the case of a Riemannian foliation. That similarity can be indeed realized as an equivalence between the original Molino's description and ours in that case. According to Molino's terminology, the local isomorphism class of $G$ is called the {\em structural local group\/} \cite{AlvarezMoreira2016}, and, with the terminology of \cite{DyerHurderLukina2016,DyerHurderLukina2017}, $\widehat X_0$ will be called the {\em Molino space\/} and $H$ the {\em discriminant group\/}. 

\begin{prop}\label{p: G-foliated space <=> H = e}
	$X$ is a $G$-foliated space for some local group $G$ if and only if its discriminant group is trivial.
\end{prop}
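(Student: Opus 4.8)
The plan is to prove the equivalence by relating the triviality of $H$ to the existence of a pseudogroup equivalence between $\HH$ and a pseudogroup generated by local left translations on a local group.

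The easy direction is ``$H$ trivial $\Rightarrow$ $G$-foliated space''. If $H=\{e\}$, then the free right $H$-action on $\widehat X_0$ is trivial, and the homeomorphism $\widehat X_0/H\to X$ induced by $\hat\pi_0$ shows that $\hat\pi_0:\widehat X_0\to X$ is itself a homeomorphism. Moreover, this homeomorphism is foliated (it carries leaves to their holonomy coverings, which are the leaves themselves since $\widehat X_0/H\equiv X$), so $\hat\pi_0$ is a foliated homeomorphism and hence induces a pseudogroup equivalence $\widehat\HH_0\to\HH$. At the transversal level, $H=\hat\pi_0^{-1}(u_0)$ trivial means $\hat\pi_0:\widehat T_0\to T$ is a homeomorphism, so $\widehat\HH_0$ is equivalent to $\HH$; since $\widehat\HH_0$ is by construction equivalent to the pseudogroup generated by local left translations of $\Gamma$ on $G$, so is $\HH$, and thus $X$ is a $G$-foliated space by definition.

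For the converse, suppose $X$ is a $G'$-foliated space, i.e.\ $\HH$ is equivalent to a pseudogroup $\GG'$ on a local group $G'$ generated by some local left translations (which we may take to be those of a finitely generated dense sub-local group $\Gamma'$, using minimality and Proposition~\ref{p: list, G}-\eqref{i: G}). The key observation is that for such a pseudogroup $\GG'$, the discriminant group computed at the identity element $e'\in G'$ is trivial: by Proposition~\ref{p: list, HH}-\eqref{i: ol HH}, the closure $\ol{\GG'}$ is again generated by left translations on $G'$ (the closure of the left-translation pseudogroup consists of left translations by elements of a compact local group containing $G'$ densely), and a left translation fixing $e'$ must be $\id$; hence the group of germs at $e'$ of elements of $\ol{\GG'}$ fixing $e'$ is trivial. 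Then, since the construction of $H$ is invariant under equivalences and independent of the base point $u_0$ by Proposition~\ref{p: equivalent Molino}, the discriminant group of $X$ is isomorphic to this trivial group, so $H=\{e\}$.

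The main obstacle will be the converse direction, specifically verifying carefully that the closure of the left-translation pseudogroup on a local group consists only of left translations---this is where strong local freeness of $\ol{\GG'}$ (equivalently, the strong quasi-analyticity hypotheses on $X$ and $\ol\HH$, which transfer to $\GG'$ and $\ol{\GG'}$ by invariance under equivalences) is essential. One must also take care that the various pseudogroup equivalences involved in identifying $H$ with the germ group at $e'$ do respect the relevant base points; this is handled by the base-point independence already established in Proposition~\ref{p: equivalent Molino}, together with Lemma~\ref{l: HH(chi(u times P)) = T} and the minimality of $\HH$, which guarantees we may move the base point to $e'$ without changing the equivalence class of the Molino's description.
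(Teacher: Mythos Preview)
Your argument is essentially correct, but it takes a longer route than the paper and contains a couple of misattributions.

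For the converse direction, the paper proceeds in one line: by Proposition~\ref{p: list, G}-\eqref{i: G}, the hypothesis that $X$ is a $G$-foliated space is \emph{equivalent} to $\ol\HH$ being strongly locally free; but then any $g\in\ol S$ with $u_0\in\dom g$ and $g(u_0)=u_0$ is the identity on its domain, so $H=\{\germ(\id,u_0)\}=\{e\}$ directly from the definition. No transfer to the $G'$-side, no base-point independence, no Proposition~\ref{p: equivalent Molino} is needed. Your detour through $\GG'$, computing the discriminant at $e'$, and pulling back via the invariance of the Molino description works, but it re-derives strong local freeness of $\ol\HH$ by going around rather than invoking it directly.

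Two points to fix in your write-up. First, Proposition~\ref{p: list, HH}-\eqref{i: ol HH} does not assert that the closure of a left-translation pseudogroup consists of left translations; it only constructs $\ol\HH$ and shows it is equicontinuous. The claim you want (that $\ol{\GG'}$ is strongly locally free) is exactly the content of one direction of Proposition~\ref{p: list, G}-\eqref{i: G}, so you should cite that instead. Second, the reference to Lemma~\ref{l: HH(chi(u times P)) = T} at the end is irrelevant: that lemma concerns the saturation property~\eqref{HH(chi(u times P)) = T} and plays no role here. The base-point independence you need is entirely contained in Proposition~\ref{p: equivalent Molino}.
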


\begin{proof}
	The ``if'' part of the statement is directly given by Theorem~\ref{mt: Molino}. To prove the ``only if'' part, assume $X$ is a $G$-foliated space for some local group $G$. Thus $\ol\HH$ is strongly locally free, obtaining that $H=\{e\}$ according to the definition of $H$ given in the proof of Theorem~\ref{mt: Molino}. 
\end{proof} 

For every $\hat x\in\widehat X_0$, let $\widehat L_{\hat x}$ denote the leaf of $\widehat X_0$ through $\hat x$, and consider the identity $\widetilde L_x^{\hol}\equiv\widehat L_{\hat x}$ given by Theorem~\ref{mt: Molino}. The following result is a direct consequence of the construction in the proof of Theorem~\ref{mt: Molino}.

\begin{prop}\label{p: Hol(L_x_0,x_0) subset H}
	Let $x_0\in p_{i_0}^{-1}(u_0)\subset U_{i_0}\subset X$. Given $\hat x_0\in\hat\pi_0^{-1}(x_0)$, we have
		\begin{equation}\label{Hol(L_x_0,x_0)}
			\Hol(L_{x_0},x_0)=\{\,\gamma\in H\mid\widehat L_{\hat x_0}\cdot\gamma=\widehat L_{\hat x_0}\,\}\;,
		\end{equation}
	and the map $\widetilde L_{x_0}^{\hol}\equiv\widehat L_{\hat x_0}\hookrightarrow\widehat X_0$ becomes equivariant with respect to the homomorphism $\Hol(L_{x_0},x_0)\hookrightarrow H$.
\end{prop}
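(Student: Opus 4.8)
The plan is to read off both assertions from the explicit description of leafwise path lifting in $\widehat X_0$ given by Lemma~\ref{l: hat c(1)}, using from Theorem~\ref{mt: Molino} that $\hat\pi_0$ restricts on each leaf to the holonomy covering $\widehat L_{\hat x_0}\to L_{x_0}$ and that $H$ acts freely on $\widehat X_0$ with orbits the fibres of $\hat\pi_0$. Since $\hat x_0\in\hat\pi_0^{-1}(x_0)\subset\widehat U_{i_0,0}$ and $p_{i_0}(x_0)=u_0$, I would first write $\hat x_0=[x_0,\gamma_0,i_0]$ with $\gamma_0\in\hat\pi_0^{-1}(u_0)=H$ (using that $q:\widetilde U_{i_0,0}\to\widehat U_{i_0,0}$ is a homeomorphism). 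Replacing $\hat x_0$ by $\hat x_0\cdot\gamma_0^{-1}=[x_0,e,i_0]$ conjugates both the $H$-stabiliser of $\widehat L_{\hat x_0}$ and the identification $\widetilde L_{x_0}^{\hol}\equiv\widehat L_{\hat x_0}$ by $\gamma_0$, so it is enough to treat the canonical base point $\hat x_0=[x_0,e,i_0]$, $e=\germ(\id_T,u_0)$, with $\Hol(L_{x_0},x_0)\hookrightarrow H$ the germ inclusion; for a general $\hat x_0$ this homomorphism is conjugation by $\gamma_0$ and its image the corresponding leaf-stabiliser.

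For the set equality, fix $\gamma\in H$. As $\hat x_0$ and $\hat x_0\cdot\gamma=[x_0,\gamma,i_0]$ both lie over $x_0$ and $\hat\pi_0|_{\widehat L_{\hat x_0}}$ is a covering, $\widehat L_{\hat x_0}\cdot\gamma=\widehat L_{\hat x_0}$ iff $\hat x_0\cdot\gamma\in\widehat L_{\hat x_0}$ iff some leafwise loop $c$ at $x_0$ lifts from $\hat x_0$ to a path ending at $\hat x_0\cdot\gamma$. Choosing $\JJ=(j_0,\dots,j_\beta)$ covering such a $c$ with $j_0=j_\beta=i_0$ (possible since $x_0\in U_{i_0}$) and applying Lemma~\ref{l: hat c(1)}, that endpoint is $[x_0,\delta,i_0]$ with $\delta=\germ(h_\JJ,u_0)$; since $c$ is a loop, $h_\JJ(u_0)=u_0$, so $\delta\in\Hol(L_{x_0},x_0)$, and since $q$ is injective on $\widetilde U_{i_0,0}$, this endpoint equals $\hat x_0\cdot\gamma$ iff $\gamma=\delta$. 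Conversely, every $\delta\in\Hol(L_{x_0},x_0)$ is $\germ(h_\JJ,u_0)$ for some admissible $\JJ$ (because $\HH$ is generated by the $h_{ij}$); such a $\JJ$ must begin and end at the index $i_0$ and fix $u_0$, hence covers a leafwise loop at $x_0$ by the path/sequence correspondence recalled in Section~\ref{ss: foliated sps}. This proves $\{\gamma\in H\mid\widehat L_{\hat x_0}\cdot\gamma=\widehat L_{\hat x_0}\}=\Hol(L_{x_0},x_0)$.

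For the equivariance, for $\gamma$ in this stabiliser the translation $\cdot\gamma:\widehat L_{\hat x_0}\to\widehat L_{\hat x_0}$ is a foliated homeomorphism over $L_{x_0}$, i.e.\ a covering transformation of $\hat\pi_0|_{\widehat L_{\hat x_0}}$, determined by $\hat x_0\mapsto\hat x_0\cdot\gamma=[x_0,\gamma,i_0]$. Under $\widetilde L_{x_0}^{\hol}\equiv\widehat L_{\hat x_0}$ the canonical right $\Hol(L_{x_0},x_0)$-action by covering transformations also moves the base point along loop lifts, and by the computation of the previous paragraph (via Lemma~\ref{l: hat c(1)}) it moves $\hat x_0$ to the very same point $[x_0,\gamma,i_0]$; since a covering transformation of a connected covering is determined by the image of one point, the two actions of $\gamma$ agree, which is the asserted equivariance. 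The one delicate point I anticipate here is pure convention bookkeeping: the holonomy homomorphism is normalised with an inverse ($\hol([c])=\germ(h_\II^{-1},u)$), passing from the natural left-concatenation covering transformations of $\widetilde L^{\hol}$ to the canonical \emph{right} $\Hol$-action inserts another inversion, and the $H$-action on $\widehat X_0$ is germ multiplication on a fixed side; one must check these cancel, so that the lift endpoints of Lemma~\ref{l: hat c(1)} line up with ``$\hat x_0\cdot\gamma$'' on the nose and the induced map $\Hol(L_{x_0},x_0)\to H$ is a homomorphism rather than an anti-homomorphism. Once the conventions are fixed consistently, both assertions follow directly from Lemma~\ref{l: hat c(1)} and freeness of the $H$-action.
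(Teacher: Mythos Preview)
Your approach is essentially the paper's: both arguments are driven by Lemma~\ref{l: hat c(1)} and the freeness of the $H$-action. Two organizational differences are worth recording. First, you reduce explicitly to the canonical point $\hat x_0=[x_0,e,i_0]$; the paper's proof in fact uses this implicitly (when it writes $\hat y=[y,\delta,i]$ with $\delta=\germ(h_\JJ,u_0)$ for $h_\JJ\in\HH$, this identification of points of $\widehat L_{\hat x_0}$ presupposes the canonical base point). Your remark that for a general $\hat x_0$ the homomorphism into $H$ becomes conjugation by $\gamma_0$ is the correct refinement. Second, for the inclusion ``$\supset$'' the paper argues abstractly (an element of the stabiliser acts as a covering transformation of $\widehat L_{\hat x_0}\to L_{x_0}$, hence lies in the deck group $\Hol(L_{x_0},x_0)$, and freeness of the $H$-action then identifies it with its image in $H$), whereas you argue constructively via the loop/admissible-sequence correspondence. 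Both are fine; the paper's route is shorter once equivariance is in hand, yours is more self-contained.

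Your caution about the convention bookkeeping is well placed and is not merely cosmetic. With the paper's normalisation $\hol([c])=\germ(h_\II^{-1},u_0)$, the lift endpoint computed from Lemma~\ref{l: hat c(1)} along $c$ produces $\germ(h_\II,u_0)=\gamma^{-1}$ in the second slot, not $\gamma$; the paper's displayed computation $[y,\delta\gamma\delta^{-1}\delta,i_0]$ glosses over this inversion. The statement is still correct, but only after one fixes consistently which loop (\,$c$ or $c^{-1}$\,) realises the ``canonical right action'' of $\Hol$ on $\widetilde L^{\hol}$. In a clean write-up you should pin this down rather than leave it as a remark, since otherwise the map $\Hol(L_{x_0},x_0)\to H$ you obtain could be an anti-homomorphism.
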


According to the proof of Proposition~\ref{p: equivalent Molino}, it follows from Proposition~\ref{p: Hol(L_x_0,x_0) subset H} that, for all $x\in X$ and $\hat x\in\hat\pi_0^{-1}(x)$, there is an isomorphism
	\[
		\Hol(L_x,x)\cong\{\,\gamma\in H\mid\widehat L_{\hat x}\cdot\gamma=\widehat L_{\hat x}\,\}
	\]
	so that the map $\widetilde L_x^{\hol}\equiv\widehat L_{\hat x}\hookrightarrow\widehat X_0$ becomes equivariant with respect to the induced injective homomorphism $\Hol(L_x,x)\to H$. Nevertheless this isomorphism is not canonical in general.
	
With the notation of the proof of Theorem~\ref{mt: Molino}, consider a pseudogroup equivalence $\Phi:\widehat{\HH}_0\to\GG$. Since $\widehat{\HH}_0$ is locally equivariant with respect to the right action of $H$, there is a right local action of $H$ on $G$ so that $\Phi$ and $\GG$ are locally equivariant by Proposition~\ref{p: chi'}. By the density of $\Gamma$ in $G$, it easily follows that $H$ can be identified with a compact subgroup of $G$ acting on $G$ by local right translations. Moreover $\Phi$ induces an equivalence between $\HH$ and the pseudogroup $\GG/H$ on $G/H$ generated by the induced local left action of $\Gamma$ on $G/H$.

\begin{prop}\label{p: H has no non-trivial normal subgroups of G}
$H$ has no non-trivial normal subgroups of $G$.
\end{prop}

\begin{proof}
Let $K$ be a normal subgroup of $G$ contained in $H$. After taking the closure if necessary, we can assume that $K$ is compact. Then $G':=G/K$ becomes a locally compact local group with a finitely generated sublocal group $\Gamma':=\Gamma K/K$ and a compact subgroup $H':=H/K$. Let $\GG'$ the pseudogroup generated by the local left action of $\Gamma'$ on $G'$ by local left translations. The induced local left actions of $\Gamma$ and $\Gamma'$ on $G/H\equiv G'/H'$ generate the same pseudogroup $\GG/H\equiv\GG'/H'$, which is equivalent to $\HH$. But, according to the proof of Theorem~\ref{mt: Molino}, both $H$ and $H'$ can be canonically identified with the germs of maps in $\overline{\GG/H}\equiv\overline{\GG'/H'}$ whose source and target is any fixed element. So $H'\equiv H$, yielding $K=\{e\}$. 
\end{proof}

\section{Foliated homogeneous foliated spaces}\label{s: foliated homogeneous}

The foliated space $X$ is called {\em foliated homogeneous\/} when the canonical left action of $\Homeo(X,\FF)$ on $X$ is transitive. Similarly, if $X$ is $C^\infty$, it is called {\em $C^\infty$ foliated homogeneous\/} when the canonical left action of $\Diffeo(X,\FF)$ on $X$ is transitive. A priori, $C^\infty$ foliated homogeneity is stronger than foliated homogeneity, but we will see that indeed they are equivalent conditions for compact minimal $C^\infty$ foliated spaces (Section~\ref{s: C^infty G-fol sp => C^infty foliated homogeneous}).

Take any complete metric $d$ inducing the topology of $X$, and let $D$ be the induced complete metric on $\Homeo(X)$ defined by
	\[
		D(\phi,\psi)=\sup_{x\in X}d(\phi(x),\psi(x))+\sup_{x\in X}d(\phi^{-1}(x),\psi^{-1}(x))\;.
	\]
In this way, $\Homeo(X)$ becomes a completely metrizable topological group, and its canonical left action on $X$ is continuous. Moreover it is easy to check that $\Homeo(X,\FF)$ is closed in $\Homeo(X)$, and therefore $\Homeo(X,\FF)$ is also a completely metrizable topological group. 

Suppose that $X$ is compact. Then $D$ induces the compact-open topology on $\Homeo(X)$, as follows from \cite[Theorem~3]{Arens1946}, obtaining that $\Homeo(X)$ is also second countable. So $\Homeo(X)$ is a Polish group, and $\Homeo(X,\FF)$ a Polish subgroup. Therefore, by a theorem of Effros \cite{Effros1965,vanMill2004}, if $X$ is foliated homogeneous, then the canonical left action of $\Homeo(X,\FF)$ on $X$ is {\em micro-transitive\/}; i.e., for all $x\in X$ and any neighborhood $\NN$ of $\id_X$ in $\Homeo(X,\FF)$, the set $\NN\cdot x$ is a neighborhood of $x$ in $X$.

\begin{proof}[Proof of Theorem~\ref{mt: foliated homogeneous => G-foliated space}]
	Clark and Hurder have proved that any $C^\infty$ homogeneous matchbox manifold is equicontinuous \cite[Theorem~5.2]{ClarkHurder2013}. Indeed, their argument applies to any compact minimal foliated homogeneous foliated space. Moreover the $C^\infty$ structure is not used in that result. Thus the conditions of our statement are enough to get that $(X,\FF)$ is equicontinuous.
	
	The rest of the proof uses the same main tool as in \cite[Theorem~5.2]{ClarkHurder2013}, the indicated theorem of Effros. 
	
	Let us prove that $\HH$ is strongly locally free. Recall that this means that there is some generating set $S$ with  $S^2\subset S=S^{-1}$ such that, for every $h\in S$, if $h$ is the identity in some non-empty open set, then it is the identity in its whole domain. Since $\{U_i\}$ is finite, there is some $\epsilon>0$ such that $d(\ol{U_i},X\setminus\widetilde U_i)<\epsilon$ for all $i$. Since the action of $\Homeo(X,\FF)$ on $X$ is micro-transitive, there is some $\delta>0$ such that, for all $x,y\in X$ with $d(x,y)<\delta$, there exists some $\phi\in\Homeo(X,\FF)$ so that $D(\phi,\id_X)<\epsilon$ and $\phi(x)=y$. 
	
	Since every $T_i$ has compact closure in $\widetilde T_i$, we easily get a finite open cover $\{T_{ia}\}$ of $T_i$ such that the $d$-diameter of every $\sigma_i(T_{ia})$ is smaller than $\delta$. Let $U_{ia}=\xi_i^{-1}(B_i\times T_{ia})$, $\xi_{ia}=\xi_i|_{U_{ia}}$, $\widetilde U_{ia}=\widetilde U_i$ and $\tilde\xi_{ia}=\tilde\xi_i$. By using $\{U_{ia},\xi_{ia}\}$ and $\{\widetilde U_{ia},\tilde\xi_{ia}\}$, varying $i$ and $a$, instead of $\{U_i,\xi_i\}$ and $\{\widetilde U_i,\tilde\xi_i\}$, it follows that we can assume that the $d$-diameter of every $\sigma_i(T_i)$ is smaller than $\delta$.
	
	Take $S$ equal to the family of the maps $h_\II$ for admissible sequences $\mathcal I$. Suppose that some $h_\II\in S$ fixes a point $u\in\dom h_\II$. Thus $\mathcal I=(i_0,\dots,i_\alpha)$ with $i_\alpha=i_0$. Let $x=\sigma_{i_0}(u)\in U_{i_0}$ and let $c:I\to X$ be a leafwise loop in $L_x$ based at $x$ and $\UU$-covered by $\II$. Take any point $v\in\dom h_\II$, and let $y=\sigma_{i_0}(v)\in U_{i_0}$. Since the $d$-diameter of $\sigma_{i_0}(T_{i_0})$ is smaller than $\delta$, according to our application of the Effros theorem, there is some $\phi\in\Homeo(X,\FF)$ with $\phi(x)=y$ and $d(c(t),\phi c(t))<\epsilon$ for all $t\in I$. Hence the leafwise path $\phi c:I\to X$ is $\widetilde\UU$-covered by $\II$. It follows that $\tilde h_\II(v)=p_{i_0}\phi c(1)=p_{i_0}\phi(x)=p_{i_0}(y)=v$, obtaining $h_\II(v)=v$. This shows that $h_\II=\id_{\dom h_\II}$, and therefore $\HH$ satisfies the condition of being strongly locally free with this $S$.
	
	$\HH$ is strongly quasi-analytic because it is strongly locally free, and therefore the hypotheses of Theorem~\ref{mt: Molino} are satisfied. In particular, the closure $\ol\HH$ is defined and generated by the set $\ol S$ induced by the above $S$. 
	
	Now, let us sharpen the above argument to prove that $\ol\HH$ is also strongly locally free, and therefore $(X,\FF)$ is a $G$-foliated space for some local group $G$ by Proposition~\ref{p: list, G}-\eqref{i: G}. For any $g\in\ol S$ with $O=\dom g$, there is a sequence of admissible sequences, $\mathcal I_k=(i_{k,0},\dots,i_{k,\alpha_k})$, such that $O\subset\dom h_{\mathcal I_k}$ for all $k$ and $g=\lim_kh_{\mathcal I_k}|_O$ in the compact-open topology. Thus $i_0:=i_{k,0}$ is independent of $k$. Suppose that $g(u)=u$ for some $u\in O$, which means that $u'_k:=h_{\mathcal I_k}(u)\to u$ as $k\to\infty$. So we can assume that $i_{k,\alpha_k}=i_0$ for all $k$. Let $x=\sigma_{i_0}(u)\in U_{i_0}$ and $x'_k=\sigma_{i_0}(u'_k)\in U_{i_0}$. We get $x'_k=\sigma_{i_0}(u'_k)\to\sigma_{i_0}(u)=x$ because $u'_k\to u$. For every $k$, there exists a leafwise path $c_k$, $\UU$-covered by $\II_k$, with $c_k(0)=x$ and $c_k(1)=x'_k$. For any $v\in O$, we have $v'_k:=h_{\II_k}(v)\to g(v)$, and let $y=\sigma_{i_0}(v)\in U_{i_0}$. As before, there is some $\phi\in\Homeo(X,\FF)$ such that $\phi(x)=y$ and $d(c_k(t),\phi c_k(t))<\epsilon$ for all $t\in I$, and let $y'_k:=\phi c_k(1)=\phi(x'_k)$. Hence the leafwise path $\phi c_k$ is $\widetilde\UU$-covered by $\II_k$, obtaining $p_{i_0}(y'_k)=\tilde h_{\II_k}(v)=h_{\II_k}(v)=v'_k$. Thus $v'_k\to v$ because $y'_k=\phi(x'_k)\to\phi(x)=y$ and $p_{i_0}(y)=v$. So $g(v)=v$, showing $g=\id_O$. Therefore $\ol\HH$ satisfies the condition of being strongly locally free with $\ol S$.
\end{proof}

\section{$C^\infty$ Molino's description}\label{s: C^infty Molino}

In this section, suppose that $X$ is $C^\infty$ and satisfies the hypotheses of Theorem~\ref{mt: Molino}, and let $(G,\Gamma,H,\widehat X_0,\hat\pi_0)$ represent its Molino's description. Recall that $H$ is a compact group acting on $\widehat X_0$ and $\pi_0:\widehat X_0\to X$ induces a homeomorphism $\widehat X_0/H \to X$.

\begin{prop}\label{p: C^infty Molino}
	$\widehat X_0$ has a unique $C^\infty$ structure so that $\hat\pi_0$ is $C^\infty$ and $T\hat\pi_0:T\widehat\FF_0\to T\FF$ restricts to isomorphisms between the fibers. Moreover the foliated $H$-action is also $C^\infty$.
\end{prop}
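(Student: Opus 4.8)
The plan is to write down an explicit $C^\infty$ foliated atlas on $\widehat X_0$ coming from the construction in the proof of Theorem~\ref{mt: Molino}, and then to read off every assertion from the local expressions, in this atlas, of the changes of coordinates, of $\hat\pi_0$, and of the $H$-action. First I would observe that $\widetilde U_{i,0}$ is the fibered product of $p_i\colon U_i\to T_i$ and $\hat\pi_0\colon\widehat T_{i,0}\to T_i$, so that, composing the homeomorphism $q\colon\widetilde U_{i,0}\to\widehat U_{i,0}$ with the map $(x,\gamma,i)\mapsto(\pr_{B_i}\xi_i(x),\gamma)$, one obtains a foliated chart
\[
\hat\xi_{i,0}\colon\widehat U_{i,0}\longrightarrow B_i\times\widehat T_{i,0},\qquad\hat\xi_{i,0}([x,\gamma,i])=\bigl(\pr_{B_i}\xi_i(x),\gamma\bigr),
\]
whose plaques are the fibers of $\hat p_{i,0}$; hence $\{\widehat U_{i,0},\hat\xi_{i,0}\}$ is a foliated atlas of $\widehat\FF_0$. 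Using~\eqref{xi_i xi_j^-1} and the definition of the equivalence relation ``$\sim$'', the change of coordinates takes the form $\hat\xi_{i,0}\hat\xi_{j,0}^{-1}(\bv,\delta)=\bigl(g_{ij}(\bv,\hat\pi_0(\delta)),\widehat{h_{ij}}(\delta)\bigr)$. Since $X$ is $C^\infty$, the map $g_{ij}$ has partial derivatives of all orders with respect to $\bv$ that are continuous jointly in both variables, and precomposing with the continuous map $(\bv,\delta)\mapsto(\bv,\hat\pi_0(\delta))$ preserves this property; thus the change of coordinates is $C^\infty$ in the foliated sense, so $\{\widehat U_{i,0},\hat\xi_{i,0}\}$ defines a $C^\infty$ structure on $\widehat X_0$.

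Next I would read off the compatibility properties in this atlas. In the charts $\xi_i$ and $\hat\xi_{i,0}$ the local expression of $\hat\pi_0$ is $(\bv,\gamma)\mapsto(\bv,\hat\pi_0(\gamma))$, whose leafwise component is the projection onto $B_i$; hence $\hat\pi_0$ is $C^\infty$ and its leafwise tangent map is, on every plaque, the identity of $\R^n$, so $T\hat\pi_0\colon T\widehat\FF_0\to T\FF$ restricts to isomorphisms between the fibers (the leaves of $\widehat X_0$ have the same dimension $n$ as those of $X$ because $\hat\pi_0$ restricts to the holonomy coverings). Uniqueness of a $C^\infty$ structure with these two properties is then immediate from Proposition~\ref{p: T_x phi iso for all x} applied to the foliated map $\hat\pi_0\colon\widehat X_0\to X$. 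Finally, in the chart $\hat\xi_{i,0}$ the transformation $\hat x\mapsto\hat x\cdot\sigma$ has local expression $(\bv,\gamma)\mapsto(\bv,\gamma\sigma)$ (the $H$-action preserving each $\widehat U_{i,0}$), whose leafwise component is the identity; hence each such map is a $C^\infty$ foliated diffeomorphism of $\widehat X_0$, and, since the action map $\widehat X_0\times H\to\widehat X_0$ is continuous by Theorem~\ref{mt: Molino} and all leafwise components of its local expressions are projections, the foliated $H$-action is $C^\infty$.

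I do not expect a genuine obstacle: the argument reduces to the three elementary local computations above. The one place deserving a little care is the initial set-up, namely identifying $\widetilde U_{i,0}$ with the fibered product and checking that the resulting charts $\hat\xi_{i,0}$ are compatible with the quotient topology on $\widehat X_0$ and with the defining cocycle $\{\widehat U_{i,0},\hat p_{i,0},\widehat{h_{ij}}\}$ of $\widehat\FF_0$, via the homeomorphisms $q\colon\widetilde U_{i,0}\to\widehat U_{i,0}$.
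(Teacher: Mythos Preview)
Your proposal is correct and follows essentially the same approach as the paper's proof: construct the foliated charts $\hat\xi_{i,0}$ via the fibered-product identification of $\widetilde U_{i,0}$, compute the changes of coordinates as $(\bv,\delta)\mapsto(g_{ij}(\bv,\hat\pi_0(\delta)),\widehat{h_{ij}}(\delta))$ to obtain the $C^\infty$ structure, read off the local expressions of $\hat\pi_0$ and the $H$-action, and invoke Proposition~\ref{p: T_x phi iso for all x} for uniqueness. The only differences are cosmetic (order of the steps and a swap of indices in the change-of-coordinates formula).
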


\begin{proof}
	If the foliated atlas $\UU$ defines a $C^\infty$ foliated structure on $X$, it is easy to check that the foliated atlas of $\widehat X_0$ constructed in the proof of Theorem~\ref{mt: Molino} (Section~\ref{s: Molino}) defines a $C^\infty$ foliated structure satisfying the stated properties.
	
	By Proposition~\ref{p: T_x phi iso for all x} applied to $\hat\pi_0$, the $C^\infty$ structure on $\widehat X_0$ is determined by the condition that $\hat\pi_0$ is $C^\infty$ and $T\hat\pi_0$ restricts to isomorphisms between the fibers.
\end{proof}

If $\widehat X_0$ is equipped with the unique $C^\infty$ structure given by Proposition~\ref{p: C^infty Molino}, then $(G,\Gamma,H,\widehat X_0,\hat\pi_0)$ is called the {\em $C^\infty$ Molino's description\/} of $X$.

\section{Right local transverse actions}\label{s: local transverse action}

\subsection{Topological right local transverse actions}\label{s: top local transverse action}

The foliated homeomorphisms leafwisely homotopic to the identity form a normal subgroup $\Homeo_0(X,\FF)$ of $\Homeo(X,\FF)$, obtaining the (possibly non-Hausdorff) topological group 
	\[
		\ol{\Homeo}(X,\FF)=\Homeo(X,\FF)/\Homeo_0(X,\FF)\;.
	\]

Suppose that $X$ is compact for the sake of simplicity. Then a {\em right local transverse action\/} of a local group $G$ on $X$ can be defined as a map $\phi:X\times O\to X$, for some $O\in\NN(G,e)$, such that $\phi^g:=\phi(\cdot,g)\in\Homeo(X,\FF)$ for all $g\in O$, and $O\to\ol{\Homeo}(X,\FF)$, $g\mapsto[\phi^g]$, is a local anti-homomorphism of $G$ to $\ol{\Homeo}(X,\FF)$. Two right local transverse actions, $\phi:X\times O\to X$ and $\psi:X\times P\to X$, are declared to be {\em equivalent\/} if there is some $Q\in\NN(G,e)$ such that $Q\subset O\cap P$ and the restrictions $\phi,\psi:X\times Q\to X$ are leafwise homotopic with respect to the foliated structure on $X\times Q$ with leaves $L\times\{g\}$, for leaves $L$ of $X$ and points $g\in Q$.

\begin{lem}\label{l: G is locally contractible}
	If $G$ is locally contractible, then the equivalence class of $\phi$ is determined by the induced local anti-homomorphism of $G$ to $\ol{\Homeo}(X,\FF)$.
\end{lem}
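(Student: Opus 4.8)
The plan is to take two right local transverse actions $\phi\colon X\times O_{\text{pt}}\to X$ and $\psi\colon X\times P_{\text{pt}}\to X$ inducing the same germ at $e$ of local anti-homomorphism $G\rightarrowtail\ol{\Homeo}(X,\FF)$ and to produce some $Q\in\NN(G,e)$ together with a leafwise homotopy $\phi\simeq\psi$ over $X\times Q_{\text{pt}}$. First I would carry out two reductions. After shrinking we have $[\phi^g]=[\psi^g]$ for all $g$ in some $Q_0\in\NN(G,e)$. Replacing $\phi$ by $(x,g)\mapsto(\phi^e)^{-1}\phi(x,g)$ — which is again a right local transverse action, induces the same anti-homomorphism (as $[\phi^e]=e$), and is equivalent to $\phi$ (a leafwise homotopy is $(x,g,t)\mapsto G^e_t(\phi(x,g))$, where $G^e\colon X\times I\to X$ is any leafwise homotopy from $\id_X$ to $(\phi^e)^{-1}\in\Homeo_0(X,\FF)$; this is leafwise because $\phi^g$ maps each leaf $L$ onto a leaf on which $G^e$ restricts to a homotopy) — I may assume $\phi^e=\psi^e=\id_X$. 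Then $\beta^g:=\phi^g\circ(\psi^g)^{-1}$ lies in $\Homeo_0(X,\FF)$ for all $g\in Q_0$, the assignment $g\mapsto\beta^g$ is continuous into $\Homeo(X,\FF)$ (since $X$ is compact, $g\mapsto\phi^g$ and $g\mapsto\psi^g$ are continuous, and $\Homeo(X,\FF)$ is a topological group), and $\beta^e=\id_X$. Because each $\psi^g$ is a foliated homeomorphism, a leafwise homotopy over $X\times Q_{\text{pt}}$ from $\beta\colon(x,g)\mapsto\beta^g(x)$ to the first projection $\pr_1$ pulls back along $\psi$ to a leafwise homotopy $\phi\simeq\psi$; so it suffices to leafwisely homotope $\beta$ to $\pr_1$ over $X\times Q_{\text{pt}}$ for some $Q\subset Q_0$.

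Next I would invoke local contractibility to choose $Q\subset Q_0$ in $\NN(G,e)$ that is contractible. Since each $\beta^g$ is leafwise homotopic to $\id_X$ it induces the morphism $\id_\HH$, and since a morphism out of $\HH\times Q=\bigcup_g\HH\times\{g\}$ is determined by its slices, $\beta$ and $\pr_1$ induce the same morphism $\HH\times Q\to\HH$. The goal becomes a continuous family, parametrised by $g\in Q$, of leafwise homotopies $H^g\colon X\times I\to X$ from $\beta^g$ to $\id_X$; then $H(x,g,t):=H^g(x,t)$ is the desired foliated map $X\times Q_{\text{pt}}\times I\to X$. I would build such a family locally in $g$ and patch: near each parameter value one refines $\UU$, $\widetilde\UU$ and shrinks $Q$ so that, using that $\beta$ induces $\id_\HH$ together with $\beta^g\to\id_X$ as $g\to e$, the homeomorphism $\beta^g$ carries each plaque of $\UU$ into the plaque of $\widetilde\UU$ with the same transverse coordinate; the chartwise straight‑line homotopies from $\beta^g$ to $\id_X$ are then defined, and are glued, continuously in $x$ and in $g$, by a partition of unity subordinated to $\UU$ and the leafwise center of mass, exactly as in the construction of the structural right local transverse action (Proposition~\ref{p: exists a right local transverse action}, via Corollary~\ref{c: CC_bar lambda,bar x}). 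Finally, since $Q$ is contractible, these local continuous families patch to a global one over $Q$ (equivalently, the endpoint map from leafwise homotopies of foliated self‑maps to pairs of leafwise‑homotopic foliated self‑maps admits enough continuous local sections that the lift of $g\mapsto(\beta^g,\id_X)$, normalised by the constant homotopy at $g=e$, exists over the contractible base $Q$).

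The step I expect to be the main obstacle is the continuous‑in‑$g$ selection of leafwise homotopies $\beta^g\simeq\id_X$. The difficulty is that $\beta^g$ being $C^0$‑close to $\id_X$ in $\Homeo(X,\FF)$ does not by itself place $\beta^g(x)$ and $x$ in a common plaque, since leaves of a minimal foliated space return near themselves; the point is that $\beta^g$ induces $\id_\HH$ and $\beta^g\to\id_X$, which together, after a fine enough choice of atlas and a small enough $Q$, do force the plaquewise matching needed for the center‑of‑mass gluing to be defined. Once that is arranged the patching over the contractible parameter space $Q$ — the sole place where the hypothesis that $G$ is locally contractible enters — is routine, and pulling back along $\psi$ completes the proof; the necessity of contractibility here is consistent with the role of this hypothesis in distinguishing the present situation from, say, the case of a totally disconnected structural local group.
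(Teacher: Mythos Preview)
Your reduction to $\beta^g:=\phi^g(\psi^g)^{-1}\in\Homeo_0(X,\FF)$ with $\beta^e=\id_X$ is fine, and the reformulation ``leafwisely homotope $\beta$ to $\pr_1$ over some $Q$'' is correct. But from that point on you make the problem much harder than it is, and introduce genuine gaps.

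First, the center-of-mass gluing (Corollary~\ref{c: CC_bar lambda,bar x}, Proposition~\ref{p: exists a right local transverse action}) requires a leafwise Riemannian metric, hence a $C^\infty$ structure; the lemma sits in the topological subsection, where no such structure is assumed. Second, the ``plaquewise matching'' step you flag as the main obstacle is asserted but not argued: you claim that $\beta^g$ inducing $\id_\HH$ together with $\beta^g\to\id_X$ forces $\beta^g(x)$ and $x$ into a common plaque, but you give no mechanism. (It is in fact true, via a connectedness argument: locally contractible implies locally connected, so for a connected $Q$ the continuous map $g\mapsto p_i(\beta^g(x))$ lands in the countable set $p_i(L_x\cap U_i)$ and is therefore constant; but that reasoning is absent from your proposal.) Third, the final ``patching over contractible $Q$'' via an unproved fibration property of the endpoint map is hand-wavy and not substantiated by anything in the paper.

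The paper's proof bypasses all of this with a one-line explicit formula. Using the contraction $E:Q\times I\to Q$, $g_t:=E(g,t)$, $g_0=e$, $g_1=g$, and any leafwise homotopy $H$ from $\phi^e$ to $\psi^e$, one sets
\[
F(x,g,t)=\psi^{g_t}(\psi^e)^{-1}\phi^{g_t^{-1}g}(\phi^e)^{-1}H(x,t)\;,
\]
checks $F(\cdot,g,0)=\phi^g$ and $F(\cdot,g,1)=\psi^g$, and observes that each $F(\cdot,g,t)$ is leafwisely homotopic to $\phi^g$ (using that $\psi^{g_t}\simeq\phi^{g_t}$, $\phi^{g_t}\phi^{g_t^{-1}g}\simeq\phi^g$, and $(\phi^e)^{-1},(\psi^e)^{-1},H(\cdot,t)\simeq\id_X$), so $F$ sends each leaf $L\times\{g\}\times I$ into the single leaf $\phi^g(L)$. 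In your own setup, after reducing to $\phi^e=\psi^e=\id_X$, the same idea collapses to the even simpler formula $F(x,g,t)=\beta^{E(g,t)}(x)$: this is continuous, equals $x$ at $t=0$ and $\beta^g(x)$ at $t=1$, and maps $L\times\{g\}\times I$ into $L$ because every $\beta^h$ preserves each leaf. No center of mass, no plaquewise analysis, no patching---the contraction of $Q$ is used \emph{directly} to write down the homotopy rather than merely to glue local data.
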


\begin{proof}
	Let $\psi:X\times P\to X$ be another right local transverse action inducing the same local anti-homomorphism of $G$ to $\ol{\Homeo}(X,\FF)$ as $\phi$. Thus there is some $Q\in\NN(G,e)$ such that $Q\subset O\cap P$ and $\phi^g$ is leafwisely homotopic to $\psi^g$ for all $g\in Q$. Since $G$ is locally contractible, we can suppose that there is a homotopy $E:Q\times I\to Q$ of the constant map $\const_{g_0}$ to $\id_Q$, for some point $g_0\in Q$. By choosing $Q$ small enough and using $g_0^{-1}Q$ instead of $Q$, we can also assume that $g_0=e$. Let $g_t=E(g,t)$ for $g\in Q$ and $t\in I$. Given any leafwise homotopy $H:X\times I\to X$ of $\phi^e$ to $\psi^e$, the map $F:X\times Q\times I\to X$, defined by
		\[
			F(x,g,t)=\psi^{g_t}(\psi^e)^{-1}\phi^{g_t^{-1}g}(\phi^e)^{-1}H(x,t),
		\] 
	is a leafwise homotopy between the restrictions $\phi,\psi:X\times Q\to X$, where $X\times Q$ is foliated with leaves $L\times\{g\}$, for leaves $L$ of $X$ and points $g\in Q$. This follows by using that $(\psi^e)^{-1}$, $(\phi^e)^{-1}$ and $H(\cdot,t)$ are leafwisely homotopic to $\id_X$, and $\psi^{g_t}$ and $\phi^{g_t}\phi^{g_t^{-1}g}$ are leafwisely homotopic to $\phi^{g_t}$ and $\phi^g$, respectively. 
\end{proof}

According to Lemma~\ref{l: G is locally contractible}, when $G$ is locally contractible, a right local transverse action of $G$ on $X$ could be defined as a local anti-homomorphism $G$ to $\ol{\Homeo}(X,\FF)$, given by a map $O\to\ol{\Homeo}(X,\FF)$, $g\mapsto[\phi^g]$, for some $O\in\NN(G,e)$ and some foliated map $\phi:X\times O\to X$ with $\phi^g\in\Homeo(X,\FF)$ for all $g\in O$, where $X\times O$ is foliated with leaves $L\times\{g\}$, for leaves $L$ of $X$ and points $g\in O$. This corresponds to the definition of right transverse action of Lie groups on foliated manifolds given in \cite{AlvarezKordyukov2008a}. But it seems impossible to extend Lemma~\ref{l: G is locally contractible} to arbitrary local groups, which motivates our more involved definition.

\begin{lem}\label{l: phi^e = id_X}
	We can assume $\phi^e=\id_X$.
\end{lem}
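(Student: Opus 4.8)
The plan is to prove this by replacing $\phi$ with the equivalent right local transverse action $\psi\colon X\times O_{\text{\rm pt}}\to X$ defined by $\psi(x,g)=\phi\bigl((\phi^e)^{-1}(x),g\bigr)$, so that $\psi^g=\phi^g\circ(\phi^e)^{-1}$ and hence $\psi^e=\phi^e\circ(\phi^e)^{-1}=\id_X$. The key observation making this work is that $e$ must be sent to the identity of $\ol{\Homeo}(X,\FF)$ by the local anti-homomorphism $g\mapsto[\phi^g]$ induced by $\phi$; thus $[\phi^e]=[\id_X]$, i.e.\ $\phi^e\in\Homeo_0(X,\FF)$, and since $\Homeo_0(X,\FF)$ is a subgroup of $\Homeo(X,\FF)$ we also get $(\phi^e)^{-1}\in\Homeo_0(X,\FF)$.

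First I would check that $\psi$ is again a right local transverse action. Continuity of $\psi$ is immediate, and each $\psi^g$ is a foliated homeomorphism, being a composite of such. That $\psi$ maps leaves to leaves follows because $(\phi^e)^{-1}\in\Homeo(X,\FF)$ carries any leaf $L$ of $X$ onto a leaf, which $\phi$ (being foliated on $X\times O_{\text{\rm pt}}$) then maps into a leaf. Finally, in $\ol{\Homeo}(X,\FF)$ one has $[\psi^g]=[\phi^g]\,[(\phi^e)^{-1}]=[\phi^g]$ because $[(\phi^e)^{-1}]$ is the identity coset; hence $g\mapsto[\psi^g]$ is literally the same map $O\to\ol{\Homeo}(X,\FF)$ as the one induced by $\phi$, so it is a local anti-homomorphism of $G$, as required.

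It then remains to verify that $\psi$ is equivalent to $\phi$, and in fact I would produce a leafwise homotopy between them over all of $X\times O_{\text{\rm pt}}$ (so one may take $Q=O$ in the definition of equivalence). Using $(\phi^e)^{-1}\in\Homeo_0(X,\FF)$, choose a leafwise homotopy $K\colon X\times I\to X$ with $K_0=\id_X$ and $K_1=(\phi^e)^{-1}$, and define $F\colon (X\times O_{\text{\rm pt}})\times I\to X$ by $F(x,g,t)=\phi\bigl(K(x,t),g\bigr)$. Then $F$ is continuous, $F(\cdot,\cdot,0)=\phi$ and $F(\cdot,\cdot,1)=\psi$, and $F$ is foliated: for any leaf $L$ of $X$ one has $K(L\times I)\subset L$ (the leaf containing $K(L\times I)$ must be $L$ itself since $K_0=\id_X$), whence $F(L\times\{g\}\times I)\subset\phi(L\times\{g\})$, which lies in a single leaf because $\phi$ is foliated.

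I do not expect a genuine obstacle here: the construction is formal, and the only points requiring a little attention are the leaf-tracking check for $F$ and the remark that $\Homeo_0(X,\FF)$ being a subgroup yields $(\phi^e)^{-1}\in\Homeo_0(X,\FF)$. Once $\psi$ is in hand, we may and do replace $\phi$ by $\psi$, so that from now on $\phi^e=\id_X$.
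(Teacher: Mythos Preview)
Your proof is correct and follows essentially the same approach as the paper: both replace $\phi$ by $\psi^g=\phi^g(\phi^e)^{-1}$ and exhibit the leafwise homotopy $F(\cdot,g,t)=\phi^g\circ K_t$ built from a leafwise homotopy $K$ between $\id_X$ and $(\phi^e)^{-1}$. You have simply filled in more of the routine verifications (that $[\phi^e]$ is the identity coset, that $\psi$ is again a right local transverse action, and the leaf-tracking for $F$) than the paper does.
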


\begin{proof}
	Consider the foliated structure on $X\times O$ with leaves $L\times\{g\}$, for leaves $L$ of $X$ and points $g\in O$. The foliated map $\psi:X\times O\to X$, defined by $\psi^g:=\phi^g(\phi^e)^{-1}$, satisfies the stated conditions. In fact, if $H:X\times I\to X$ is a leafwise homotopy of $(\phi^e)^{-1}$ to $\id_X$, then $F:X\times O\times I\to X$, defined by $F(\cdot,g,t)=\phi^gH(\cdot,t)$, is a leafwise homotopy of $\phi$ to $\psi$.
\end{proof}

From now on, suppose that $\phi^e=\id_X$ according to Lemma~\ref{l: phi^e = id_X}. Then, since $X$ is compact, there is some $O'\in\NN(G,e)$ such that $O'\subset O$ and $\phi(U_i\times O')\subset\widetilde U_i$ for all $i$. The foliated restrictions $\phi:U_i\times O'\to\widetilde U_i$ induce maps $\bar\phi:T_i\times O'\to\widetilde T_i$, and let $\bar\phi:T\times O'\to\widetilde T$ denote their union. Then the restriction $\bar\phi:\Omega:=\bar\phi^{-1}(T)\to T$ is a right local action of $G$ on $T$, which will be said to be {\em induced\/} by $\phi$.

\begin{lem}\label{l: HH is locally equivariant}
	$\HH$ is locally equivariant (with respect to $\bar\phi:\Omega\to T$).
\end{lem}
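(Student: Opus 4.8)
The plan is to reduce the statement to the elementary holonomy transformations $h_{ij}$. Local equivariance of paro maps is preserved by composites, restrictions to open subsets, unions and inversions, and the pseudogroup $\HH$ is obtained from the generating set $\{h_{ij}\}$ by exactly these operations; so it suffices to show that each $h_{ij}$ is locally equivariant with respect to the induced right local action $\bar\phi:\Omega\to T$.

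First I would record the relevant identities. By construction of $\bar\phi$ one has $\tilde p_i\circ\phi=\bar\phi\circ(p_i\times\id)$ on $U_i\times O'$, so $\bar\phi(p_i(x),g)=\tilde p_i(\phi^g(x))$ for $x\in U_i$ and $g\in O'$, and $\bar\phi(u,e)=u$ since $\phi^e=\id_X$ and $\tilde p_i|_{U_i}=p_i$. Also $\tilde h_{ij}\circ\tilde p_j=\tilde p_i$ on $\widetilde U_i\cap\widetilde U_j$, and $\tilde h_{ij}$ extends $h_{ij}$. The key point is that $\phi^g(U_i\cap U_j)\subset\phi(U_i\times O')\cap\phi(U_j\times O')\subset\widetilde U_i\cap\widetilde U_j$ for $g\in O'$; hence, for $x\in U_i\cap U_j$ and $g\in O'$, setting $u=p_j(x)$ (so $h_{ij}(u)=p_i(x)$), all the terms in
\[
	\tilde h_{ij}\big(\bar\phi(u,g)\big)=\tilde h_{ij}\big(\tilde p_j(\phi^g(x))\big)=\tilde p_i(\phi^g(x))=\bar\phi\big(h_{ij}(u),g\big)
\]
are defined, and the equivariance identity holds at the level of $\widetilde\UU$ for all $u\in\dom h_{ij}=p_j(U_i\cap U_j)$ and $g\in O'$.

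It then remains to pass from $\tilde h_{ij}$ back to $h_{ij}$ on a suitable open set. I would take
\[
	\Sigma=\{\,(u,g)\in(\dom h_{ij}\times O')\cap\Omega\mid\bar\phi(u,g)\in\dom h_{ij}\,\}\;,
\]
which is open, being the intersection of $\dom h_{ij}\times O'$, of $\Omega$, and of $\bar\phi^{-1}(\dom h_{ij})$ (by continuity of $\bar\phi$), and which contains $\dom h_{ij}\times\{e\}$ because $\bar\phi(u,e)=u$. For $(u,g)\in\Sigma$ we get $\bar\phi(h_{ij}(u),g)=\tilde h_{ij}(\bar\phi(u,g))=h_{ij}(\bar\phi(u,g))\in p_i(U_i\cap U_j)\subset T$, so $(h_{ij}(u),g)\in\Omega$; thus $\Sigma\subset\Omega\cap(h_{ij}\times\id_G)^{-1}(\Omega)$, and $\bar\phi(\Sigma)\subset\dom h_{ij}$, and on $\Sigma$ the displayed identity reads $h_{ij}(\bar\phi(u,g))=\bar\phi(h_{ij}(u),g)$. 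Hence $h_{ij}$ is locally equivariant, and therefore so is $\HH$.

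The argument is essentially routine; the only step needing care is this last one — replacing the extension $\tilde h_{ij}$ by $h_{ij}$ itself — which is why one computes first at the level of $\widetilde\UU$, where things are clean because $\phi^g$ respects the overlaps $\widetilde U_i\cap\widetilde U_j$, and only afterwards restricts to $\Sigma$. One should also keep in mind the disjoint-sum structure $T=\bigsqcup_iT_i$, $\widetilde T=\bigsqcup_i\widetilde T_i$, so that the piecewise definition of $\bar\phi$ over the charts introduces no ambiguity.
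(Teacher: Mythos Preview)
Your proof is correct and follows essentially the same route as the paper: reduce to the generators $h_{ij}$, establish the equivariance identity $\tilde h_{ij}\bar\phi(u,g)=\bar\phi(h_{ij}(u),g)$ at the level of the extended charts (using $\phi^g(U_i\cap U_j)\subset\widetilde U_i\cap\widetilde U_j$), and then restrict to an open neighbourhood of $\dom h_{ij}\times\{e\}$ where $\tilde h_{ij}$ can be replaced by $h_{ij}$. Your treatment of the last step is in fact a bit more careful than the paper's, since you explicitly verify the containment $\Sigma\subset\Omega\cap(h_{ij}\times\id_G)^{-1}(\Omega)$ required by the definition of local equivariance.
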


\begin{proof}
	 It is enough to prove that the maps $h_{ij}$ are locally equivariant. Let $u\in p_j(U_i\cap U_j)$ and $g\in O'$, and take any $x\in U_i\cap U_j$ such that $p_j(x)=u$. We have $h_{ij}(u)=p_i(x)$, $\phi(x,g)\in\widetilde U_i\cap\widetilde U_j$ and $\bar\phi(u,g)=p_j\phi(x,g)$, yielding
	 	\[
			\tilde h_{ij}\bar\phi(u,g)=p_i\phi(x,g)=\bar\phi(p_i(x),g)=\bar\phi(h_{ij}(u),g)\;.
		\]
	So $h_{ij}\bar\phi(u,g)=\bar\phi(h_{ij}(u),g)$ for all $(u,g)$ in $(p_j(U_i\cap U_j)\times O')\cap\bar\phi^{-1}(T_i)$, which is an open neighborhood of $p_j(U_i\cap U_j)\times\{e\}$ in $\Omega$.
\end{proof}

\begin{lem}\label{l: [phi] determines [bar phi]}
	If $X$ has no holonomy, then the equivalence class of $\phi$ determines the equivalence class of $\bar\phi:\Omega\to T$.
\end{lem}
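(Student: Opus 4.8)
The plan is the following. Work with two right local transverse actions $\phi,\psi:X\times O_{\text{\rm pt}}\to X$ having $\phi^e=\psi^e=\id_X$ (the standing assumption of this subsection) which are equivalent, i.e.\ leafwisely homotopic after restriction to some common $Q\in\NN(G,e)$, chosen with $\bar Q$ compact and $\phi(U_j\times Q),\psi(U_j\times Q)\subset\widetilde U_j$ for all $j$; the goal is to show that the induced right local actions $\bar\phi$ and $\bar\psi$ of $G$ on $T$ coincide on a neighbourhood of $T\times\{e\}$. Fix a leafwise homotopy $F:X\times Q_{\text{\rm pt}}\times I\to X$ from $\phi|_{X\times Q}$ to $\psi|_{X\times Q}$. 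For each $g\in Q$ and $x\in X$ the path $c_{x,g}:=F(x,g,\cdot)$ is a leafwise path in $L_x$ from $\phi^g(x)$ to $\psi^g(x)$, which for $g=e$ is a leafwise loop based at $x$; and for $g\in Q$ the maps $\bar\phi^g,\bar\psi^g:T_i\to\widetilde T_i$ are given by $\bar\phi^g(u)=\tilde p_i\phi^g\sigma_i(u)$ and $\bar\psi^g(u)=\tilde p_i\psi^g\sigma_i(u)$, with $\bar\phi^e=\bar\psi^e=\id$.

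Next, fix $u_0\in T_i$, put $x_0=\sigma_i(u_0)$, and, using compactness of $I$, continuity of $F$ and finiteness of $\widetilde\UU$, apply a Lebesgue-number argument to $\{x_0\}\times\{e\}\times I$ and then spread it out by continuity: this produces a partition $0=t_0<\dots<t_{m+1}=1$, a finite index sequence $\II=(l_0,\dots,l_m)$ with $l_0=l_m=i$ (possible since $\phi^g\sigma_i(u),\psi^g\sigma_i(u)\in\widetilde U_i$), and neighbourhoods $V$ of $u_0$ in $T_i$ and $P$ of $e$ in $Q$, such that $F(\sigma_i(u),g,[t_k,t_{k+1}])\subset\widetilde U_{l_k}$ for all $u\in V$, $g\in\bar P$ and all $k$. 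Thus, for every $u\in V$ and $g\in P$, the path $c_{\sigma_i(u),g}$ is $\widetilde\UU$-covered by $\II$, so $\bar\phi^g(u)\in\dom\tilde h_\II$ and $\tilde h_\II\bar\phi^g(u)=\bar\psi^g(u)$.

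Then evaluate at $g=e$: since $\bar\phi^e=\bar\psi^e=\id$, this yields $\tilde h_\II(u)=u$ for all $u\in V$; in particular $\tilde h_\II$ fixes $u_0$. As $\tilde h_\II(u_0)=u_0\in T_i$ and $T_i$ is open in $\widetilde T_i$, the restriction of $\tilde h_\II$ near $u_0$ lies in $\HH=\widetilde\HH|_T$ and fixes $u_0=p_i(x_0)$, so its germ belongs to $\Hol(L_{x_0},x_0)$; since $X$ has no holonomy this germ is trivial, whence $\tilde h_\II=\id$ on some neighbourhood $W$ of $u_0$ in $\widetilde T_i$. Finally shrink $V$ to a neighbourhood $V'$ of $u_0$ with $\overline{V'}\subset V\cap W$ compact, and then, using continuity of $(u,g)\mapsto\bar\phi^g(u)$ and compactness of $\overline{V'}$, shrink $P$ to $P'$ so that $\bar\phi^g(V')\subset W$ for all $g\in P'$; this gives $\bar\psi^g(u)=\tilde h_\II\bar\phi^g(u)=\bar\phi^g(u)$ for all $u\in V'$ and $g\in P'$. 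Running $u_0$ over $T$, the union of the open sets $V'\times P'$ is a neighbourhood of $T\times\{e\}$ on which $\bar\phi=\bar\psi$, so $\bar\phi$ and $\bar\psi$ are equivalent right local actions.

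The main obstacle, and the only point where the no-holonomy hypothesis is used, is the identification of $\tilde h_\II$ near $u_0$: without triviality of $\Hol(L_{x_0},x_0)$ one could only conclude that $\bar\phi$ and $\bar\psi$ differ near $T\times\{e\}$ by germs of holonomy transformations, not that they agree. The remaining technical work---choosing the partition and the sequence $\II$ uniformly on a neighbourhood of $(u_0,e)$---is the routine tube-lemma / Lebesgue-number argument, combined with the fact, already exploited in Lemma~\ref{l: HH is locally equivariant}, that $\phi$ and $\psi$ carry $U_j\times Q$ into $\widetilde U_j$.
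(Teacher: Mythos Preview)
Your argument is correct and follows a direct route, whereas the paper proves the key claim (``$\bar\phi=\bar\psi$ on a neighbourhood of $T\times\{e\}$'') by contradiction: assuming failure, it extracts sequences $x_k\to x$, $g_k\to e$ with $\tilde p_i\phi^{g_k}(x_k)\ne\tilde p_i\psi^{g_k}(x_k)$, covers the limiting loop $H(x,e,\cdot)$ by an admissible sequence $\JJ$, and then uses no-holonomy to conclude that $\tilde h_\JJ$ is the identity near $\tilde p_i(x)$, reaching a contradiction. Your tube-lemma argument replaces this sequential extraction by a uniform covering of the family of paths $F(\sigma_i(u),g,\cdot)$ for $(u,g)$ near $(u_0,e)$ by a single $\II$, which is cleaner.

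One point worth noting: your invocation of the no-holonomy hypothesis is actually redundant. Once you evaluate at $g=e$ and obtain $\tilde h_\II(u)=u$ for \emph{all} $u\in V$, you already have $\tilde h_\II=\id$ on the open neighbourhood $V$ of $u_0$; you may simply take $W=V$ and proceed, without ever appealing to triviality of $\Hol(L_{x_0},x_0)$. In the paper's contradiction argument the hypothesis is used essentially, because there one only knows the behaviour of $\tilde h_\JJ$ at the single limit point $\tilde p_i(x)$ and must invoke no-holonomy to propagate this to a neighbourhood. Your direct approach, by contrast, gets the identity on a whole open set $V$ for free from $\bar\phi^e=\bar\psi^e=\id$, so the hypothesis plays no role. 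This is a genuine simplification that the paper's approach obscures.
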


\begin{proof}
	Suppose that $\phi$ is equivalent to another right transverse local action $\psi:X\times P\to X$ with $\psi^e=\id_X$. Take some $P'\in\NN(G,e)$ such that $P'\subset P$ and $\psi(U_i\times P')\subset\widetilde U_i$ for all $i$. As above, consider the map $\bar\psi:T\times P'\to\widetilde T$ induced by the foliated restrictions $\psi:U_i\times P'\to\widetilde U_i$, whose restriction $\bar\psi:\Sigma:=\bar\psi^{-1}(T)\to T$ is a right local action of $G$ on $T$. For some $Q\in\NN(G,e)$ with $Q\subset O'\cap P'$, there is a leafwise homotopy $H:X\times Q\times I\to X$ between the foliated restrictions $\phi,\psi:X\times Q\to X$, where $X\times Q$ is foliated as before.
	
	\begin{claim}\label{cl: tilde p_i phi^g(x) = tilde p_i psi^g(x)}
		We have $\bar\phi=\bar\psi$ on $T\times Q'$ for some $Q'\in\NN(G,e)$ with $Q'\subset Q$.
	\end{claim}
	
	By absurdity, suppose that this assertion is not true. Then $\tilde p_{i_k}\phi^{g_k}(x_k)\ne\tilde p_{i_k}\psi^{g_k}(x_k)$ for some sequences, of indices $i_k$, of points $x_k\in U_{i_k}$, and $g_k\to e$ in $G$. Since $X$ is compact, we can assume that $i_k=i$ for all $k$, and $x_k\to x$ in $X$; thus $x\in\ol{U_i}\subset\widetilde U_i$. Consider the leafwise paths $c_k=H(x_k,g_k,\cdot)$ and $c=H(x,e,\cdot)$. Note that $c_k\to c$ in the compact-open topology, and $c$ is a loop in $L_x$ based at $x$ because $\phi^e=\psi^e=\id_X$. Let $\JJ=(j_0,\dots,j_\alpha)$ be an admissible sequence $\widetilde\UU$-covering $c$ with $j_0=j_\alpha=i$. Hence $\JJ$ also $\widetilde\UU$-covers $c_k$ for $k$ large enough, obtaining that $\tilde p_i\phi^{g_k}(x_k)\in\dom\tilde h_\JJ$ and $\tilde h_\JJ\tilde p_i\phi^{g_k}(x_k)=\tilde p_i\psi^{g_k}(x_k)$ for $k$ large enough. Since $\tilde p_i\phi^{g_k}(x_k)\to\tilde p_i(x)$ in $T_i$ and $\tilde h_\JJ$ is the identity on some neighborhood of $\tilde p_i(x)$ because $X$ has no holonomy, it follows that $\tilde h_\JJ\tilde p_i\phi^{g_k}(x_k)=\tilde p_i\phi^{g_k}(x_k)$ for $k$ large enough, yielding $\tilde p_i\phi^{g_k}(x_k)=\tilde p_i\psi^{g_k}(x_k)$ for $k$ large enough, a contradiction.
	
	By Claim~\ref{cl: tilde p_i phi^g(x) = tilde p_i psi^g(x)}, we get $\bar\phi=\bar\psi$ on $\Omega\cap\Sigma\cap(T\times Q')$, showing that the right local actions $\bar\phi:\Omega\to T$ and $\bar\psi:\Sigma\to T$ are equivalent.
\end{proof}

\subsection{$C^\infty$ right local transverse actions}\label{s: C^infty local transverse action}

From now on, assume that $X$ is $C^\infty$, and consider also the (possibly non-Hausdorff) topological group 
	\[
		\ol{\Diffeo}(X,\FF)=\Diffeo(X,\FF)/\Diffeo_0(X,\FF)\;,
	\]
where $\Diffeo_0(X,\FF)$ is the normal subgroup of $\Diffeo(X,\FF)$ consisting of the foliated diffeomorphisms that are leafwisely homotopic to $\id_X$; i.e., $\Diffeo_0(X,\FF)=\Diffeo(X,\FF)\cap\Homeo_0(X,\FF)$. It is said that the right local transverse action $\phi:X\times O\to X$ is $C^\infty$ if it is $C^\infty$ as foliated map, where $X\times O$ is foliated with leaves $L\times\{g\}$, for leaves $L$ of $X$ and points $g\in G$, and moreover $\phi^g\in\Diffeo(X,\FF)$ for all $g\in O$, and $O\to\ol{\Diffeo}(X,\FF)$, $g\mapsto[\phi^g]$, is a local anti-homomorphism of $G$ to $\ol{\Diffeo}(X,\FF)$. A {\em $C^\infty$ equivalence\/} between two $C^\infty$ right local transverse actions is defined like in the case of right local transverse actions. Suppose also that $\phi$ is $C^\infty$ from now on, and consider the induced right local action $\bar\phi:\Omega\to T$ defined in Section~\ref{s: top local transverse action}.

\begin{lem}\label{l: [bar phi] determines [phi]}
	The $C^\infty$ equivalence class of $\phi$ is determined by the equivalence class of $\bar\phi:\Omega\to T$.
\end{lem}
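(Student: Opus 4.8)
The plan is to prove injectivity of $\phi\mapsto[\bar\phi]$: if $\phi$ and $\psi$ are $C^\infty$ right local transverse actions of $G$ on $X$, normalized by Lemma~\ref{l: phi^e = id_X} so that $\phi^e=\psi^e=\id_X$, and their induced transverse local actions $\bar\phi$ and $\bar\psi$ agree around $T\times\{e\}$, then the restrictions $\phi,\psi\colon X\times Q_{\text{\rm pt}}\to X$ are $C^\infty$ leafwisely homotopic for some $Q\in\NN(G,e)$. In contrast with Lemma~\ref{l: [phi] determines [bar phi]}, no hypothesis on the holonomy is needed here, because equality of the transverse data forces $\phi^g(x)$ and $\psi^g(x)$ to lie on one and the same plaque, not merely on plaques matched by a holonomy transformation, so that the two points can be joined by a canonical leafwise path.

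First I would fix a $C^\infty$ leafwise Riemannian metric on $X$ and, as recalled in Section~\ref{ss: foliated sps}, choose $\UU$ and $\widetilde\UU$ so that every plaque of their charts is a convex ball in a leaf. Using that $X$ is compact, take a shrinking $\{V_i\}$ of $\{U_i\}$ covering $X$ with $\ol{V_i}\subset U_i$, and put $K_i=p_i(\ol{V_i})$, a compact subset of $T_i$. Since $\bar\phi=\bar\psi$ on a neighbourhood of $T\times\{e\}$, each $K_i$ is compact, and $\{U_i\}$ is finite, the tube lemma yields a single $Q\in\NN(G,e)$, small enough that $\phi(U_i\times Q)\subset\widetilde U_i$ and $\psi(U_i\times Q)\subset\widetilde U_i$ for all $i$, with $\bar\phi=\bar\psi$ on $K_i\times Q$ for every $i$. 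Then, for $x\in V_i$ and $g\in Q$, both $\phi^g(x)$ and $\psi^g(x)$ lie in $\widetilde U_i$ and have the same image $\bar\phi(p_i(x),g)=\bar\psi(p_i(x),g)$ under $\tilde p_i$, so they belong to a common plaque of $\widetilde U_i$, which is convex in its leaf.

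I would then define $F\colon X\times Q_{\text{\rm pt}}\times I\to X$ by letting $F(x,g,s)$ be the point at parameter $s$ of the unique minimizing leafwise geodesic from $\phi^g(x)$ to $\psi^g(x)$; equivalently, $F(x,g,s)=\exp_{\phi^g(x)}\bigl(s\,\exp_{\phi^g(x)}^{-1}(\psi^g(x))\bigr)$ for the leafwise exponential map. This is well defined, because for $x\in V_i$ the two endpoints lie in a common convex plaque, so the geodesic exists, is unique, minimizing and contained in that plaque, and because it is intrinsic to its leaf it does not depend on the index $i$ with $x\in V_i$, while $\{V_i\}$ covers $X$. It is then routine to check that $F$ is a $C^\infty$ foliated map (it is $C^\infty$ along leaves in $(x,s)$ since $\phi$, $\psi$ and the leafwise exponential are, and continuous in $g$), that $F(\cdot,g,0)=\phi^g$ and $F(\cdot,g,1)=\psi^g$, and that each slice $F(\cdot,g,\cdot)\colon X\times I\to X$ is a foliated map, since for a leaf $L$ and $x\in L$ the path $s\mapsto F(x,g,s)$ stays in the leaf $\phi^g(L)$. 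Thus $F$ is the desired $C^\infty$ leafwise homotopy; alternatively, one can note that for $g\in Q$ the foliated diffeomorphisms $\phi^g$ and $\psi^g$ are close in the strong plaquewise $C^\infty$ topology and invoke the geodesic construction from the discussion preceding Proposition~\ref{p: list, SP}. I expect the only real difficulty to be the uniform choice of $Q$: the hypothesis provides agreement of $\bar\phi$ and $\bar\psi$ only on some neighbourhood of $T\times\{e\}$, which need not contain any tube $T\times Q$ when $T$ is merely relatively compact in $\widetilde T$, and passing to the shrinking $\{V_i\}$ with its compact pieces $K_i$, together with the finiteness of the cover, is exactly what repairs this; once a common $Q$ is available, the geodesic interpolation inside convex plaques is elementary.
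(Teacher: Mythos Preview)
Your proposal is correct and follows essentially the same approach as the paper: use compactness to find a uniform $Q\in\NN(G,e)$ on which $\bar\phi=\bar\psi$, conclude that $\phi^g(x)$ and $\psi^g(x)$ lie in the same plaque of some $\widetilde U_i$, and interpolate by leafwise geodesics in convex plaques. The only cosmetic difference is that you obtain $Q$ by shrinking the cover to get compact pieces $K_i\subset T_i$ and applying the tube lemma there, whereas the paper pulls the neighbourhood $\Theta$ back to $X$ and applies the tube lemma to the compact $X\times\{e\}$ directly.
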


\begin{proof}
	Let $\psi:X\times P\to X$ be another $C^\infty$ right local transverse action of $G$ on $X$ with $\psi^e=\id_X$. Take some $P'\in\NN(G,e)$ such that $P'\subset P$ and $\psi(U_i\times P')\subset\widetilde U_i$ for all $i$. Like in Section~\ref{s: top local transverse action}, let $\bar\psi:T\times P'\to\widetilde T$ be induced by the foliated restrictions $\psi:U_i\times P'\to\widetilde U_i$, and consider the right local action $\bar\psi:\Sigma:=\bar\psi^{-1}(T)\to T$. Suppose that $\bar\psi=\bar\phi$ on some open neighborhood $\Theta$ of $T\times\{e\}$ in $\Omega\cap\Sigma$. So $\tilde p_i\phi(x,g)=\tilde p_i\psi(x,g)$ for all $i$ and $(x,g)\in U_i\times(O\cap P)$ with $(p_i(x),g)\in\Theta$. Since $X$ is compact, the open neighborhood of $X\times\{e\}$ in $X\times(O\cap P)$,
		\[
			\bigcup_i\{\,(x,g)\in U_i\times(O\cap P)\mid(p_i(x),g)\in\Theta\,\}\;,
		\]
	contains $X\times Q$ for some $Q\in\NN(G,e)$. Hence $\phi(x,g)$ and $\psi(x,g)$ lie in the same plaque of some $\widetilde U_i$ for all $(x,g)\in X\times Q$. We can further assume that the plaques of the foliated charts in $\widetilde\UU$ are convex for some choice of a Riemannian metric on $X$, obtaining a $C^\infty$ leafwise homotopy between the foliated restrictions $\phi,\psi:X\times Q\to X$ by using geodesic segments in the leaves, where $X\times Q$ is foliated with leaves $L\times\{g\}$, for leaves $L$ of $X$ and points $g\in Q$. Therefore $\phi$ and $\psi$ are $C^\infty$ equivalent.
\end{proof}

\begin{prop}\label{p: exists a right local transverse action}
	 If $X$ is without holonomy, then the assignment of the induced right local action defines a bijection of the set of $C^\infty$ equivalence classes of $C^\infty$ right local transverse actions of $G$ on $X$ to the set of equivalence classes of right local actions of $G$ on $T$ satisfying that $\HH$ is locally equivariant.
\end{prop}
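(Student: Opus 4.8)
The plan is to deduce everything except \emph{surjectivity} from the three lemmas just established, and to obtain surjectivity by an explicit construction. The assignment $\phi\mapsto\bar\phi$ takes values in the stated target set by Lemma~\ref{l: HH is locally equivariant}. Since a $C^\infty$ equivalence of $C^\infty$ right local transverse actions is in particular an equivalence of right local transverse actions and induces the same $\bar\phi$, Lemma~\ref{l: [phi] determines [bar phi]} (which uses that $X$ has no holonomy) shows that $\phi\mapsto[\bar\phi]$ is well defined on $C^\infty$ equivalence classes, and Lemma~\ref{l: [bar phi] determines [phi]} shows it is injective. So it remains, given a right local action $\chi$ of $G$ on $T$ with $\dom\chi=T\times O$ for some $O\in\NN(G,e)$ (as we may assume since $X$ is compact) such that $\HH$ is locally equivariant with respect to $\chi$, to produce a $C^\infty$ right local transverse action $\phi$ of $G$ on $X$ with $\bar\phi$ equivalent to $\chi$.

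For the construction, fix a $C^\infty$ leafwise Riemannian metric and, as permitted in Section~\ref{ss: foliated sps}, take $\UU=\{U_i,\xi_i\}$ and $\widetilde\UU$ so that all plaques are convex balls in the (complete) leaves, small enough that the curvature condition of Proposition~\ref{p: Karcher} holds on them. On each $U_i$ define the obvious local model $\phi_i(x,g)=\xi_i^{-1}(\bv,\chi(p_i(x),g))$, where $\xi_i(x)=(\bv,p_i(x))$, for $(x,g)$ near $U_i\times\{e\}$; it is $C^\infty$, leafwise, equals $\id$ at $g=e$, and satisfies $\tilde p_i\phi_i(x,g)=\chi(p_i(x),g)$. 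The crucial observation is that on $U_i\cap U_j$ local equivariance of $h_{ij}$ gives $p_i\phi_j(x,g)=h_{ij}\chi(p_j(x),g)=\chi(h_{ij}p_j(x),g)=\chi(p_i(x),g)=p_i\phi_i(x,g)$ for $g$ near $e$, so $\phi_i(x,g)$ and $\phi_j(x,g)$ lie in one plaque of $U_i$, hence in a common convex ball of $L_x$. Choosing a $C^\infty$ leafwise partition of unity $\{\lambda_i\}$ subordinate to $\{U_i\}$, let $\phi(x,g)$ be the center of mass of $\sum_i\lambda_i(x)\,\delta_{\phi_i(x,g)}$ in that ball. By Proposition~\ref{p: Karcher} this is well defined, it is intrinsic because the center of mass of a measure supported in a convex ball lies in the convex hull of the support, it is leafwise, it equals $x$ when $g=e$, and it is $C^\infty$ in $(x,g)$ by Lemma~\ref{l: bm_Omega is C^r} (cf.\ Corollary~\ref{c: CC_bar lambda,bar x}), since $(x,g)\mapsto(\phi_i(x,g),\lambda_i(x))$ is $C^\infty$ and the center of mass depends smoothly on the mass distribution. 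By compactness $\phi$ is defined on $X\times O'_{\text{\rm pt}}$ for some $O'\in\NN(G,e)$ with $O'\subset O$, and $\tilde p_i\phi(x,g)=\tilde p_i\phi_i(x,g)=\chi(p_i(x),g)$ there, so the induced map $\bar\phi$ coincides with $\chi$ near $T\times\{e\}$; that is, $\bar\phi$ is equivalent to $\chi$.

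It remains to check that $\phi$ is a $C^\infty$ right local transverse action. Because $\HH$ is locally equivariant, $\chi(\cdot,g)$ conjugates $\HH$ into itself, and since $\dom\chi=T\times O$ the map $\chi(\cdot,g)\colon T\to T$ is, for $g$ in a symmetric $O''\in\NN(G,e)$, a homeomorphism with inverse $\chi(\cdot,g^{-1})$; hence $\{\phi^g:g\in O''\}$ is a uniform family of transverse equivalences. Since $\phi^e=\id_X\in\Diffeo(X,\FF)$ and $g\mapsto\phi^g$ is continuous into $C^\infty_{\text{\rm SF}}(X,\FF;X,\FF)$, Proposition~\ref{p: Diffeo^r(FF,FF') cap MM is open in MM} gives a smaller symmetric neighbourhood on which $\phi^g\in\Diffeo(X,\FF)$. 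Shrinking so that products $gh$ stay inside it, both $\phi^{gh}$ and $\phi^h\circ\phi^g$ are $C^\infty$ foliated diffeomorphisms whose induced morphisms are generated by $\chi(\cdot,h)\chi(\cdot,g)=\chi(\cdot,gh)$ (using that $\chi$ is a right local action and that induced morphisms compose); as $X$ has no holonomy, $\tilde p_i\phi^{gh}=\tilde p_i(\phi^h\phi^g)$ locally, so the two maps send each point into a common plaque, and convexity gives a $C^\infty$ leafwise homotopy between them by geodesic segments. Hence $[\phi^{gh}]=[\phi^h][\phi^g]$ in $\ol{\Diffeo}(X,\FF)$, so $g\mapsto[\phi^g]$ is a local anti-homomorphism, and $\phi$ is the desired $C^\infty$ right local transverse action.

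The main obstacle is this surjectivity step, and within it the bookkeeping around the leafwise center of mass: one must arrange plaque sizes and curvature bounds so that Proposition~\ref{p: Karcher} applies and all relevant $\phi_i(x,g)$ lie in a single convex ball, and verify that the resulting $\phi$ is genuinely $C^\infty$ and independent of the auxiliary balls so that the local formulas patch to a global foliated map. Everything else reduces to short deductions from Lemmas~\ref{l: HH is locally equivariant}, \ref{l: [phi] determines [bar phi]} and~\ref{l: [bar phi] determines [phi]}, together with the openness of $\Diffeo$ in $C^\infty_{\text{\rm SF}}$.
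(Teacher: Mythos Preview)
Your proof is correct and follows essentially the same route as the paper: reduce to surjectivity via Lemmas~\ref{l: HH is locally equivariant}--\ref{l: [bar phi] determines [phi]}, build local models $\phi_i$ in each chart, glue them with the leafwise center of mass, and then appeal to Proposition~\ref{p: Diffeo^r(FF,FF') cap MM is open in MM} and convexity of plaques for the diffeomorphism and anti-homomorphism conditions.

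Two differences deserve comment. First, the paper begins the surjectivity step by invoking Proposition~\ref{p: chi'} to extend $\chi$ to a right local action $\tilde\chi$ on $\widetilde T$, and then defines $\phi_i$ on $\ol{U_i}$ with values in $\widetilde U_i$; the ``same plaque'' assertion (the paper's Claim~\ref{cl: tilde p_i phi_i(x,g) = tilde p_i phi_j(x,g)}) is then obtained by a compactness/sequence contradiction. You bypass this extension and argue directly from local equivariance of $h_{ij}$, using compactness of the supports $\supp\lambda_i$ to get uniformity in $g$. Your route is a little more direct, but note that your sentence ``$\dom\chi=T\times O$ \dots\ as we may assume since $X$ is compact'' is not quite right: $X$ compact makes $T$ only \emph{relatively compact} in $\widetilde T$, not compact, so one cannot in general normalize $\dom\chi$ this way. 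This does not break your argument, since what you actually use is uniformity over the compact sets $\supp\lambda_i$, but the justification should be rephrased. Second, in your anti-homomorphism step you invoke ``$X$ has no holonomy'' to conclude $\tilde p_i\phi^{gh}=\tilde p_i(\phi^h\phi^g)$; this is unnecessary. The paper obtains it by the direct computation $p_i\phi^g(x)=\chi(p_i(x),g)$, hence $p_i\phi^h\phi^g(x)=\chi(\chi(p_i(x),g),h)=\chi(p_i(x),gh)=p_i\phi^{gh}(x)$, after first arranging $Q'$ so that $\phi^g(x)\in U_i$ for $x\in\supp\lambda_i$. Your conclusion is correct, but the appeal to absence of holonomy there is a detour.
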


\begin{proof}
	By Lemmas~\ref{l: HH is locally equivariant},~\ref{l: [phi] determines [bar phi]} and~\ref{l: [bar phi] determines [phi]}, it only remains to prove that, if $\HH$ is locally equivariant with respect to a right local action $\chi:\Sigma\to T$ of $G$ on $T$, then $\chi$ is induced by some $C^\infty$ right local transverse action of $G$ on $X$.
	
	By Proposition~\ref{p: chi'},  $\widetilde\HH$ is locally equivariant with respect to some right local action $\tilde\chi:\widetilde\Sigma\to\widetilde T$ of $G$ on $\widetilde T$, whose restriction to $T$ is equivalent to $\chi$. Since $T$ is relatively compact in $\widetilde T$, there is some $P\in\NN(G,e)$ such that $P\subset O$, $\ol T\times P\subset\widetilde\Sigma$ and $\tilde\chi(\ol{T_i}\times P)\subset\widetilde T_i$ for all $i$. Then, for $x\in\ol{U_i}\subset\widetilde U_i$ with $\tilde\xi_i(x)=(\bv,u)$ and $g\in P$, the point $\phi_i(x,g):=\tilde\xi_i^{-1}(\bv,\tilde\chi(u,g))\in\widetilde U_i$ is well defined because $u=\tilde p_i(x)\in\tilde p_i(\ol{U_i})=\ol{T_i}$. 
	
	\begin{claim}\label{cl: tilde p_i phi_i(x,g) = tilde p_i phi_j(x,g)}
		There is some $Q\in\NN(G,e)$ such that $Q\subset P$ and, if $x\in U_i\cap U_j$ and $g\in Q$, then $\phi_i(x,g),\phi_j(x,g)\in\widetilde U_i$ and $\tilde p_i\phi_i(x,g)=\tilde p_i\phi_j(x,g)$.
	\end{claim}
	
	By absurdity, suppose that this assertion is not true. So $\tilde p_{i_k}\phi_{i_k}(x_k,g_k)\ne\tilde p_{i_k}\phi_{j_k}(x_k,g_k)$ for some sequences, of indices $i_k,j_k$, of points $x_k\in U_{i_k}\cap U_{j_k}$, and $g_k\to e$ in $P$. Since $X$ is compact, we can assume that $i_k=i$ and $j_k=j$ for all $k$, and $x_k\to x$ in $X$. Thus $x\in\ol{U_i}\cap\ol{U_j}\subset\widetilde U_i\cap\widetilde U_j$, $\phi_i(x,g_k)\in\widetilde U_i$ and $\phi_j(x,g_k)\in\widetilde U_j$. Let $u_k=\tilde p_j(x_k)$ and $u=\tilde p_j(x)$. Since $\widetilde\HH$ is locally equivariant, there are some open neighborhood $W$ of $u$ in $\dom\tilde h_{ij}$ and $Q\in\NN(G,e)$ such that $Q\subset P$, $W\times Q,\tilde h_{ij}(W)\times Q\subset\widetilde\Sigma$, $\tilde\chi(W\times Q)\subset\dom\tilde h_{ij}$ and $\tilde\chi(\tilde h_{ij}(w),g)=\tilde h_{ij}\tilde\chi(w,g)$ for all $(w,g)\in W\times Q$. Take some open neighborhood $N$ of $x$ in $X$ so that $\ol N\subset\widetilde U_i\cap\widetilde U_j$ and $\tilde p_j(\ol N)\subset W$. We can choose $Q$ such that $\phi_j(\ol N\times Q)\subset\widetilde U_i\cap\widetilde U_j$, and therefore
		\[
			\tilde p_j\phi_j\big(\ol N\times Q\big)=\tilde\chi\big(\tilde p_j\big(\ol N\big)\times Q\big)\subset\dom\tilde h_{ij}\;.
		\]
	For $k$ large enough, we have $(x_k,g_k)\in N\times Q$, obtaining
		\[
			\tilde p_i\phi_i(x_k,g_k)=\tilde\chi(\tilde h_{ij}(u_k),g_k)=\tilde h_{ij}\tilde\chi(u_k,g_k)=\tilde p_i\phi_j(x_k,g_k)\;,
		\]
	a contradiction that proves Claim~\ref{cl: tilde p_i phi_i(x,g) = tilde p_i phi_j(x,g)}.
	
	Given any Riemannian metric on $X$, we can assume that the plaques of every $(U_i,\xi_i)$ and $(\widetilde U_i,\tilde\xi_i)$ are convex balls of diameter $<\pi/2\sqrt{\delta}$, where $\delta>0$ is an upper bound for the sectional curvature of the leaves 
	
	Consider the open neighborhood $Q$ of $e$ in $P$ given by Claim~\ref{cl: tilde p_i phi_i(x,g) = tilde p_i phi_j(x,g)}, and let $\{\lambda_i\}$ be a $C^\infty$ partition of unity of $X$ subordinated to $\{U_i\}$. For all $(x,g)\in X\times Q$, a probability measure on $X$ is well defined by $\mu_{x,g}=\sum_i\lambda_i(x)\,\delta_{\phi_i(x,g)}$, where $\delta_y$ denotes the Dirac mass at every $y\in X$. By Claim~\ref{cl: tilde p_i phi_i(x,g) = tilde p_i phi_j(x,g)}, if $x\in\supp\lambda_i$, then $\mu_{x,g}$ is supported in the plaque $\tilde p_i^{-1}(\chi(p_i(x),g))$ of $(\widetilde U_i,\tilde\xi_i)$. Then, by Corollary~\ref{c: CC_bar lambda,bar x}, a $C^\infty$ foliated map $\phi:X\times Q\to X$ is defined by taking $\phi(x,g)$ equal to the center of mass of $\mu_{x,g}$ in the common leaf through the points $\phi_i(x,g)$, where $X\times Q$ is foliated with leaves $L\times\{g\}$, for leaves $L$ of $X$ and points $g\in Q$. Let $\phi^g=\phi(\cdot,g):X\to X$ for $g\in Q$. Note that $\phi^g(U_i)\subset\widetilde U_i$, and $\phi^e=\id_X$ because $\phi_i(x,e)=x$ for $x\in\ol{U_i}$.
	
	 \begin{claim}\label{cl: phi^gh = phi^h phi^g}
	 	There exists some $Q'\in\NN(G,e)$ such that $Q^{\prime2}\subset Q$ and there is a $C^\infty$ leafwise homotopy of $\phi^{gh}$ to $\phi^h\phi^g$ for all $g,h\in Q'$. 
	 \end{claim}
	 
	Since $X$ is compact, there is $Q'\in\NN(G,e)$ such that $Q^{\prime2}\subset Q$ and
		\[
			\phi_j((\supp f_i\cap\supp f_j)\times Q')\subset U_i
		\]
	for all $i,j$. Then, for all $x\in\supp f_i\cap\supp f_j$ and $g\in Q'$, the points $\phi_i(x,g)$ and $\phi_j(x,g)$ are in the plaque $p_i^{-1}(\chi(p_i(x),g))$ of $(U_i,\xi_i)$ by Claim~\ref{cl: tilde p_i phi_i(x,g) = tilde p_i phi_j(x,g)}. Therefore $\phi(x,g)\in p_i^{-1}(\chi(p_i(x),g))$ according to Corollary~\ref{c: CC_bar lambda,bar x}. Applying again Claim~\ref{cl: tilde p_i phi_i(x,g) = tilde p_i phi_j(x,g)} in a similar way, we get that $\phi(\phi(x,g),h)$ is in the plaque of $(\widetilde U_i,\tilde\xi_i)$ over $\tilde\chi(\chi(p_i(x),g),h)=\tilde\chi(p_i(x),gh)$ for all $h\in Q'$. On the other hand, since $gh\in Q^{\prime2}\subset Q$, the same kind of argument shows that $\phi(x,gh)$ is in the plaque of $(\widetilde U_i,\tilde\xi_i)$ over $\tilde\chi(p_i(x),gh)$. Thus $\phi(\phi(x,g),h)$ and $\phi(x,gh)$ are in the same plaque of $(\widetilde U_i,\tilde\xi_i)$. Since these plaques are convex, we can use geodesic segments to construct a $C^\infty$ leafwise homotopy between the foliated maps $\phi^h\phi^g$ and $\phi^{gh}$ for all $g,h\in Q'$.
		 
	 \begin{claim}\label{cl: phi^g in Diffeo(X,FF)}
	 	There is some $Q''\in\NN(G,e)$ such that $Q''\subset Q'$ and $\phi^g\in\Diffeo(X,\FF)$ for all $g\in Q''$. 
	 \end{claim}
	
	For all $g\in Q'$, every restricted foliated map $\phi^g:U_i\to\widetilde U_i$ induces the open embedding $\tilde\chi^g:T_i\to\widetilde T_i$; i.e., $\{\,\phi^g\mid g\in Q'\,\}$ is a uniform family of transverse equivalences. Hence, since $\phi^e=\id_X$ and $g\mapsto\phi^g$ is continuous in the $C^\infty$ foliated topology, it follows from Proposition~\ref{p: Diffeo^r(FF,FF') cap MM is open in MM} that there is some $Q''\in\NN(G,e)$ such that $\phi^g\in\Diffeo(X,\FF)$ for all $g\in Q''$.
	
	From Claims~\ref{cl: phi^gh = phi^h phi^g} and~\ref{cl: phi^g in Diffeo(X,FF)}, and since $\phi^e=\id_X$, we get that $\phi:X\times Q\to X$ is a $C^\infty$ right transverse local action of $G$ on $X$. The induced right local action of $G$ on $T$ is equivalent to $\chi$ because every $\phi^g:U_i\to\widetilde U_i$ induces $\tilde\chi^g:T_i\to\widetilde T_i$.	
\end{proof}

Consider the following property that $(X,\FF,\phi)$ may have:
	\begin{equation}\label{FF(phi(x times P)) = X}
		\FF(\phi(\{x\}\times P))=X\quad\forall x\in X, \forall P\in\NN(G,e)\mid P\subset O.
	\end{equation}
	
\begin{lem}\label{l: FF(phi(x times P)) = X}
	Property~\eqref{FF(phi(x times P)) = X} is invariant by equivalences of right transverse local actions.
\end{lem}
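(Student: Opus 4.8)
The proof will use only one feature of a leafwise homotopy: for each point it produces a leafwise path, so its two endpoints lie on the same leaf. So the plan is as follows. Suppose $\phi:X\times O_{\text{\rm pt}}\to X$ and $\psi:X\times O'_{\text{\rm pt}}\to X$ are equivalent right local transverse actions of $G$ on $X$; by definition there are some $Q\in\NN(G,e)$ with $Q\subset O\cap O'$ and a leafwise homotopy $H:(X\times Q_{\text{\rm pt}})\times I\to X$ between the restrictions $\phi,\psi:X\times Q_{\text{\rm pt}}\to X$. Since the leaves of $X\times Q_{\text{\rm pt}}$ are the sets $L\times\{g\}$, the path $t\mapsto H(x,g,t)$ is leafwise for every $x\in X$ and $g\in Q$, so $\phi(x,g)=H(x,g,0)$ and $\psi(x,g)=H(x,g,1)$ lie on the same leaf; that is, $L_{\phi(x,g)}=L_{\psi(x,g)}$. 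Passing to saturations, this gives $\FF(\phi(\{x\}\times S))=\FF(\psi(\{x\}\times S))$ for every $x\in X$ and every subset $S\subset Q$.

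Granting this, I would assume that $(X,\FF,\phi)$ satisfies~\eqref{FF(phi(x times P)) = X} and deduce the same for $(X,\FF,\psi)$. Fix $x\in X$ and $P\in\NN(G,e)$ with $P\subset O'$, and set $P_0=P\cap Q\in\NN(G,e)$. Since $P_0\subset Q\subset O$, property~\eqref{FF(phi(x times P)) = X} for $\phi$ gives $\FF(\phi(\{x\}\times P_0))=X$, and the displayed identity with $S=P_0$ then yields $\FF(\psi(\{x\}\times P_0))=X$. Because $P_0\subset P$ and the $\FF$-saturation is monotone, $X=\FF(\psi(\{x\}\times P_0))\subset\FF(\psi(\{x\}\times P))\subset X$, so $\FF(\psi(\{x\}\times P))=X$. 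As $x$ and $P$ were arbitrary, $(X,\FF,\psi)$ satisfies~\eqref{FF(phi(x times P)) = X}, and the converse follows in the same way since the equivalence relation on right transverse local actions is symmetric.

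There is no real obstacle here: the whole argument is elementary. The only point requiring a little care is to pass to the intersection $P_0=P\cap Q$ before invoking the hypothesis, so that the homotopy $H$ is actually available over the relevant neighbourhood of $e$, and to use monotonicity of saturation so that shrinking the parameter set cannot destroy the conclusion $\FF(\cdots)=X$.
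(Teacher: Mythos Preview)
Your argument is correct and is precisely the elementary verification the paper has in mind; the paper's own proof consists of the single word ``Elementary.'' You have simply spelled out the details: the leafwise homotopy forces $\phi(x,g)$ and $\psi(x,g)$ to share a leaf for $g\in Q$, hence the saturations coincide, and shrinking to $P_0=P\cap Q$ together with monotonicity of saturation finishes it.
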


\begin{proof}
	Elementary.
\end{proof}

\begin{lem}\label{l: FF(phi(x times P)) = X <=> HH(bar phi(u times Q)) = T}
	$(X,\FF,\phi)$ satisfies~\eqref{FF(phi(x times P)) = X} if and only if $(T,\HH,\bar\phi)$ satisfies~\eqref{HH(chi(u times P)) = T}.
\end{lem}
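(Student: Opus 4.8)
The plan is to reduce both conditions to the same statement about the orbits of $\bar\phi$, using the canonical identification $X/\FF\equiv T/\HH$. Recall that $\bar\phi$ is characterized by $\tilde p_i\circ\phi^g=\bar\phi^g\circ p_i$ on $U_i$ for $g\in O'$, so that for $x\in U_i$ with $u:=p_i(x)$ the point $\phi(x,g)\in\widetilde U_i$ satisfies $\tilde p_i\phi(x,g)=\bar\phi(u,g)$ whenever $g\in O'$. Since $X$ is compact, $\ol T$ is relatively compact in $\widetilde T$; as $\bar\phi(\cdot,e)=\id_T$ and $T$ is open in $\widetilde T$, I would first fix some $Q_0\in\NN(G,e)$ with $Q_0\subset O'$ such that $\bar\phi(u,g)\in T$ for all $u\in T$ and $g\in Q_0$; in particular $\{u\}\times Q_0\subset\Omega$ for every $u\in T$.

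The key observation is that, for $x\in U_i$, $u=p_i(x)$ and $g\in Q_0$, the leaf $L_{\phi(x,g)}$ is precisely the leaf corresponding to the $\HH$-orbit $\HH(\bar\phi(u,g))$ under $X/\FF\equiv T/\HH$. Indeed $\bar\phi(u,g)\in\widetilde T_i\cap T=T_i$, and $\phi(x,g)$ lies in the plaque $\tilde p_i^{-1}(\bar\phi(u,g))$ of $(\widetilde U_i,\tilde\xi_i)$, which also contains $\sigma_i(\bar\phi(u,g))\in U_i$; hence $L_{\phi(x,g)}$ meets $U_i$ at $\sigma_i(\bar\phi(u,g))$ and therefore corresponds to $\HH(p_i\sigma_i(\bar\phi(u,g)))=\HH(\bar\phi(u,g))$. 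Since $X/\FF\equiv T/\HH$ is a bijection, this yields, for every $P\in\NN(G,e)$ with $P\subset Q_0$ and every $x\in U_i$ with $u=p_i(x)$,
	\[
		\FF(\phi(\{x\}\times P))=X\iff\HH(\bar\phi(\{u\}\times P))=T\;,
	\]
because both sides assert that the corresponding family of orbits exhausts $T/\HH\equiv X/\FF$.

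To finish, I would combine this equivalence with the monotonicity $\FF(\phi(\{x\}\times P'))\subset\FF(\phi(\{x\}\times P))$ and $\HH(\bar\phi(\{u\}\times P'))\subset\HH(\bar\phi(\{u\}\times P))$ for $P'\subset P$. If $(T,\HH,\bar\phi)$ satisfies~\eqref{HH(chi(u times P)) = T}, then given $x\in X$ and $P\in\NN(G,e)$ with $P\subset O$, pick $i$ with $x\in U_i$, set $u=p_i(x)$ and $P'=P\cap Q_0$; since $\{u\}\times P'\subset\Omega$ we get $\HH(\bar\phi(\{u\}\times P'))=T$, hence $\FF(\phi(\{x\}\times P'))=X$, hence $\FF(\phi(\{x\}\times P))=X$. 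Conversely, if $(X,\FF,\phi)$ satisfies~\eqref{FF(phi(x times P)) = X}, then given $u\in T$ and $P\in\NN(G,e)$ with $\{u\}\times P\subset\Omega$ (so $P\subset O'\subset O$), pick $i$ with $u\in T_i$, set $x=\sigma_i(u)\in U_i$ and $P'=P\cap Q_0$; then $\FF(\phi(\{x\}\times P'))=X$, hence $\HH(\bar\phi(\{u\}\times P'))=T$, hence $\HH(\bar\phi(\{u\}\times P))=T$. The only step needing real care is the compactness argument producing a single $Q_0$ valid for all base points, together with keeping straight the roles of $\UU$ and $\widetilde\UU$ when identifying $L_{\phi(x,g)}$; everything else is a direct unwinding of $X/\FF\equiv T/\HH$.
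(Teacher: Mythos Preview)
Your argument is correct and is precisely the kind of unwinding the paper has in mind: the paper's own proof is simply ``Elementary,'' so you are supplying the details the authors omitted. The core of your proof---identifying $L_{\phi(x,g)}$ with the $\HH$-orbit $\HH(\bar\phi(u,g))$ via $X/\FF\equiv T/\HH$, and then using monotonicity in $P$---is exactly right.

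One small technical remark: the uniform $Q_0$ with $T\times Q_0\subset\Omega$ need not exist, since $T$ is not compact (only $\ol T\subset\widetilde T$ is) and points $u$ near $\partial T_i$ may be pushed out of $T_i$ by arbitrarily small $g$. Fortunately your argument does not actually need it. In the forward direction, given $x\in U_i$, $u=p_i(x)$ and $P\subset O$, just choose $P'\in\NN(G,e)$ with $P'\subset P\cap O'$ and $\{u\}\times P'\subset\Omega$, which exists because $\Omega$ is an open neighborhood of $(u,e)$; then proceed exactly as you wrote. In the reverse direction you can use $P$ itself, since the hypothesis $\{u\}\times P\subset\Omega$ already gives what you need for the key equivalence. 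With this pointwise choice of $P'$ your proof goes through without change.
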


\begin{proof}
	Elementary.
\end{proof}

\subsection{Structural right transverse local action}\label{s: structural local transverse action}

Now, suppose that $X$ is a $C^\infty$ compact minimal $G$-foliated space. Fix any equivalence $\Psi$ of $\HH$ to the pseudogroup $\GG$ on $G$ generated by local left translations with respect to some finitely generated dense sublocal group $\Gamma\subset G$. The local multiplication $\mu:G\times G\rightarrowtail G$ is a right local action of $G$ on $G$ so that $\GG$ becomes locally equivariant. By Proposition~\ref{p: chi'}, there is a unique right local action $\chi:T\times  G\rightarrowtail T$, up to equivalences, such that $\HH$ and $\Psi$ become locally equivariant. According to Proposition~\ref{p: exists a right local transverse action}, there is a unique right local transverse action $\phi:X\times O\to X$ of $G$ on $X$ inducing $\chi$, up to equivalences, (whose equivalence class is) called the {\em structural right transverse local action\/}.

\section{$C^\infty$ $G$-foliated spaces are $C^\infty$ foliated homogeneous}
\label{s: C^infty G-fol sp => C^infty foliated homogeneous}

Suppose that $X$ is compact and $C^\infty$. Then the following result guarantees certain leafwise homogeneity.

\begin{prop}\label{p: leafwise homogeneous}
	Let $L$ be the leaf of $X$, let $D$ be a relatively compact regular domain without holonomy in $L$, and let $c:I\to D$ be any $C^\infty$ path. Then, for any open neighborhood $U$ of $c(I)$ in $X$, there is some $C^\infty$ leafwise diffeotopy $\phi:X\times I\to X$ supported in $U$ with $\phi(c(0),\cdot)=c$.
\end{prop}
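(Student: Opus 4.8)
The plan is to realize $c$ as an integral curve of a time-dependent leafwise vector field $Z$ on $X$ that is supported in a small neighbourhood of $c(I)$, and to take $\phi$ to be the leafwise, time-dependent flow of $Z$. First I would use the no-holonomy hypothesis to place $c(I)$ inside a good chart. Since $D$ is a relatively compact regular domain without holonomy, the leafwise holonomy along $D$ is trivial, and hence $D$ has a $C^\infty$ foliated product neighbourhood in $X$: there are an open set $\Omega\subset X$ with $D\subset\Omega$, an open set $B\subset L$ with $D\subset B$, a transversal $N$, and a $C^\infty$ foliated diffeomorphism $\Omega\cong B\times N_{\text{\rm pt}}$ under which the leaves of $\Omega$ are the sheets $B\times\{z\}$ and the sheet corresponding to $D\subset L$ is $B\times\{z_0\}$ for some $z_0\in N$. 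The leafwise path $c$ then corresponds to a $C^\infty$ path $\tilde c\colon I\to B$ in the manifold $B$, via $c(t)\leftrightarrow(\tilde c(t),z_0)$.

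Next I would build $Z$. On the manifold $B$, the map $t\mapsto\dot{\tilde c}(t)$ is a $C^\infty$ section of $\pr_B^*TB$ along the embedded compact arc $\{(\tilde c(t),t)\mid t\in I\}\subset B\times I$; extending it to a $C^\infty$ section over $B\times I$ produces a $C^\infty$ time-dependent vector field $V(\cdot,t)$ on $B$ with $V(\tilde c(t),t)=\dot{\tilde c}(t)$. Using a $C^\infty$ partition of unity on the $C^\infty$ foliated space $X$, choose $\beta\in C^\infty(X)$ with $\beta\equiv1$ on a neighbourhood of $c(I)$ and with $K:=\supp\beta$ a compact subset of $U\cap\Omega$. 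Define a time-dependent leafwise vector field $Z$ on $X$ by $Z((b,z),t)=\beta(b,z)\,V(b,t)$ on $\Omega$ and $Z=0$ outside $K$; it is $C^\infty$ along the leaves and continuous transversally, has support contained in $K\subset U$, and satisfies $Z(c(t),t)=\dot c(t)$ for every $t\in I$.

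Finally I would integrate. Inside $\Omega\cong B\times N_{\text{\rm pt}}$ the restriction of $Z$ to each sheet $B\times\{z\}$ is compactly supported (in $\pr_B(K)$, a fixed compact subset of $B$), and $Z$ vanishes off $\Omega$, so the leafwise time-dependent flow $\phi\colon X\times I\to X$ of $Z$ is defined for all $t\in I$. Standard smooth/continuous dependence of solutions of ordinary differential equations on the initial point, on time, and on the transverse parameter (the latter only continuous) shows that $\phi$ is a $C^\infty$ foliated map with $\phi(\cdot,0)=\id_X$, and that each $\phi(\cdot,t)$ is a $C^\infty$ foliated diffeomorphism; since $Z\equiv0$ off $K$, any trajectory starting off $K$ is constant, so $\phi$ is supported in $K\subset U$. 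As $c$ solves $\dot c(t)=Z(c(t),t)$ with given initial value $c(0)$, uniqueness of solutions yields $\phi(c(0),t)=c(t)$ for all $t\in I$, i.e.\ $\phi(c(0),\cdot)=c$.

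I expect the principal obstacle to be the first step: producing the $C^\infty$ foliated product neighbourhood of $D$. This is precisely where the hypotheses ``regular domain'' and ``without holonomy'' are used, and one must check that the trivialization can be taken $C^\infty$, obtained by gluing the leafwise charts covering $\ol D$ by means of the (trivial) holonomy along $D$. Once $c(I)$ lies in such a product chart the rest is the classical manifold argument for realizing a path by an ambient isotopy, performed fibrewise over the transversal; the only further care needed is the routine verification that the leafwise flow is globally defined on $X\times I$ and transversally continuous, which follows from $Z$ being compactly supported inside $\Omega$.
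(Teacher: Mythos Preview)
Your proposal is correct and follows essentially the same route as the paper: both use the no-holonomy hypothesis to obtain a $C^\infty$ foliated product neighbourhood $D\times\Sigma_{\text{pt}}\hookrightarrow X$ (the paper invokes the Reeb stability theorem for $C^\infty$ foliated spaces, \cite[Proposition~1.7]{AlvarezCandel2009}, for this step), and then realize $c$ by a compactly supported leafwise diffeotopy built inside that product chart. The only cosmetic difference is that the paper first takes a diffeotopy $\psi$ on the single leaf $L$ (citing the homogeneity of $L$) and damps it via a cutoff on the transversal $\Sigma$, whereas you construct the generating time-dependent vector field directly and damp it with a cutoff on all of $X$ before integrating.
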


\begin{proof}
	Let $E$ be a relatively compact open subset of $L$ such that $c(I)\subset E$ and $\ol E\subset D\cap U$. By the homogeneity of $L$, there is a diffeotopy $\psi:L\times I\to L$ supported in $E$ so that $\psi(\cdot,0)=\id_X$ and $\psi(c(0),\cdot)=c$. Let $\Sigma$ be a local transversal of $X$ through $x$. By the Reeb's stability theorem for $C^\infty$ foliated spaces \cite[Proposition~1.7]{AlvarezCandel2009}, there is a $C^\infty$ foliated embedding $h:D\times\Sigma\to X$ that can be identified with the identity on $D\times\{x\}\equiv D$ and $\{x\}\times\Sigma\equiv\Sigma$. Write $h^{-1}=(h',h''):\im h\to D\times\Sigma$. Take a compactly supported continuous function $f:\Sigma\to I$ with $h(\ol E\times\supp f)\subset U$ and $f(x)=1$. Then the statement is satisfied with the $C^\infty$ foliated diffeotopy $\phi:X\times I\to X$ defined by
		\[
			\phi(x,t)=
				\begin{cases}
					h(\psi(h'(x),fh''(x)),h''(x)) & \text{if $x\in\im h$}\\
					x & \text{otherwise}\;.\qed
				\end{cases}
		\]
\renewcommand{\qed}{}
\end{proof}

\begin{cor}\label{c: HH(bar phi(u times Q)) = T => X is C^infty foliated homogeneous}
	If there is a $C^\infty$ right transverse local action of $G$ on $X$ satisfying~\eqref{FF(phi(x times P)) = X}, then $X$ is $C^\infty$ foliated homogeneous.
\end{cor}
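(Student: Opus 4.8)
The plan is, given arbitrary points $x,y\in X$, to construct a $C^\infty$ foliated diffeomorphism $\Phi$ of $X$ with $\Phi(x)=y$, which immediately yields transitivity of the canonical left action of $\Diffeo(X,\FF)$ on $X$, i.e.\ $C^\infty$ foliated homogeneity. The diffeomorphism $\Phi$ will be a composite of two pieces: first a time-$g$ map $\phi^g$ of the given right transverse local action, which moves $x$ into the leaf through $y$ by virtue of property~\eqref{FF(phi(x times P)) = X}, and then a compactly supported $C^\infty$ leafwise diffeotopy, furnished by Proposition~\ref{p: leafwise homogeneous}, which slides that intermediate point to $y$ along a smooth arc inside their common leaf.

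In more detail, let $\phi:X\times O_{\pt}\to X$ be the given $C^\infty$ right transverse local action satisfying~\eqref{FF(phi(x times P)) = X}, so that $\phi^g:=\phi(\cdot,g)\in\Diffeo(X,\FF)$ for every $g\in O$. Applying~\eqref{FF(phi(x times P)) = X} with $P=O$ gives $\FF(\phi(\{x\}\times O))=X$, hence there is some $g\in O$ such that $y$ lies in the leaf $L$ of $X$ through the point $z:=\phi^g(x)$.

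Next I would join $z$ to $y$ inside $L$. Since $X$ is $C^\infty$, $L$ is a connected $C^\infty$ manifold, so there is a $C^\infty$ embedded path $c:I\to L$ with $c(0)=z$ and $c(1)=y$, and I would choose a relatively compact open neighbourhood $D$ of $c(I)$ in $L$ that is $C^\infty$ diffeomorphic to $\R^n$ --- for instance the interior of a closed tubular neighbourhood in $L$ of a compact embedded arc slightly extending $c(I)$. Then $D$ is a relatively compact regular domain, and it is without holonomy because it is contractible: the composite $\pi_1(D,z)\to\pi_1(L,z)\to\Hol(L,z)$ (the second map being $\hol$) factors through the trivial group. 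Fix any open neighbourhood $U$ of the compact set $c(I)$ in $X$. Applying Proposition~\ref{p: leafwise homogeneous} to $L$, $D$ and $c$ produces a $C^\infty$ leafwise diffeotopy $\Psi:X\times I\to X$ supported in $U$ with $\Psi(c(0),\cdot)=c$; in particular $\Psi_1:=\Psi(\cdot,1)\in\Diffeo(X,\FF)$ and $\Psi_1(z)=c(1)=y$.

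Finally, setting $\Phi:=\Psi_1\circ\phi^g$, we have $\Phi\in\Diffeo(X,\FF)$ and $\Phi(x)=\Psi_1(\phi^g(x))=\Psi_1(z)=y$, as desired. I expect the only point requiring care to be the construction of the auxiliary domain $D$: one must produce, around a smooth arc in $L$ joining $z$ to $y$, a relatively compact contractible neighbourhood inside $L$ --- contractibility being exactly what forces the holonomy of $D$ to vanish, so that Proposition~\ref{p: leafwise homogeneous} is applicable. Everything else is the elementary composition above; note that property~\eqref{FF(phi(x times P)) = X} enters only to land $x$, via $\phi^g$, on the leaf of $y$, and no extra hypothesis on $X$ (such as minimality) is needed beyond compactness and the $C^\infty$ structure already assumed in this section.
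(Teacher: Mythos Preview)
Your proposal is correct and follows essentially the same approach as the paper: the paper's proof reads simply ``Apply~\eqref{FF(phi(x times P)) = X} and Proposition~\ref{p: leafwise homogeneous},'' and you have faithfully unpacked this into the two-step argument (first $\phi^g$ to reach the right leaf, then the leafwise diffeotopy to slide along it). Your care in producing a contractible, relatively compact domain $D$ around the arc so that Proposition~\ref{p: leafwise homogeneous} applies is exactly the detail the terse original leaves to the reader.
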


\begin{proof}
	Apply~\eqref{FF(phi(x times P)) = X} and Proposition~\ref{p: leafwise homogeneous}.
\end{proof}

\begin{proof}[Proof Theorem~\ref{mt: C^infty foliated homogeneous <=> G-fol sp}]
	By Theorem~\ref{mt: foliated homogeneous => G-foliated space}, it is enough to prove ``$\text{\eqref{i: G-foliated sp}}\Rightarrow\text{\eqref{i: C^infty foliated homogeneous}}$.'' With the notation of Section~\ref{s: structural local transverse action}, $(G,\GG,\mu)$ satisfies~\eqref{HH(chi(u times P)) = T} because 
		\[
			\mu((\Gamma\times\mu(\{g\}\times Q))\cap\dom\mu)=G
		\]
	for all $g\in G$ and $Q\in\NN(G,e)$ with $\{g\}\times Q\subset\dom\mu$. So $(T,\HH,\chi)$ also satisfies~\eqref{HH(chi(u times P)) = T} by Lemma~\ref{l: HH(chi(u times P)) = T}, and therefore $(X,\FF,\phi)$ satisfies~\eqref{FF(phi(x times P)) = X} by Lemma~\ref{l: FF(phi(x times P)) = X <=> HH(bar phi(u times Q)) = T}. Thus $X$ is $C^\infty$ foliated homogeneous by Corollary~\ref{c: HH(bar phi(u times Q)) = T => X is C^infty foliated homogeneous}
\end{proof}

\section{Examples and open problems}\label{s: examples}

\subsection{Molino's description of equicontinuous suspensions}\label{ss: suspensions}

Let $T$ be a compact space with a transitive left action of a compact topological group $G$, which is {\em quasi-analytic\/} in the sense that any $g\in G$ is the identity element $e\in G$ if it acts as the identity on some non-empty open set, and let $H\subset G$ be the isotropy group at some fixed point $u_0\in T$. Moreover let $\Gamma\subset G$ be a dense subgroup isomorphic to $\pi_1(M)/\pi_1(L)$ for some regular covering $L$ of some closed connected manifold $M$. Thus we have a right $\Gamma$-action on $L$ by covering transformations, and a left $\Gamma$-action on $T$ defined by the $G$-action. The induced diagonal $\Gamma$-action on $L\times T$, given by $(y,u)\cdot\gamma=(y\cdot\gamma,\gamma^{-1}\cdot u)$, is properly discontinuous and foliated, where $L\times T$ is foliated with leaves $L\times\{u\}$, for $u\in T$. The corresponding foliated quotient space, $L\times_\Gamma T$, is called the {\em suspension\/} of the $\Gamma$-action on $T$, and the quotient projection is a foliated covering map $L\times T\to L\times_\Gamma T$. The element in $L\times_\Gamma T$ defined by any $(y,u)\in L\times T$ will be denoted by $[y,u]$. Moreover the covering projection $\theta:L\to M$ induces a fiber bundle projection $\rho:L\times_\Gamma T\to M$, $\rho([y,u])=\theta(y)$, with typical fiber $T$; in particular, $L\times_\Gamma T$ is compact. Note that the fibers of $\rho$ are transverse to the leaves; i.e., $\rho:L\times_\Gamma T\to M$ is a flat bundle. Any flat bundle with compact total space is given by a suspension. 

Let us use the notation $X\equiv(X,\FF)$ for $L\times_\Gamma T$. Let $\VV=\{V_i,\zeta_i\}$ be an atlas of $M$, with $\zeta_i:V_i\to B_i$ for some contractible open subset $B_i\subset\R^n$. Thus the flat bundle $\rho:X\to M$ is trivial over every $V_i$; i.e., there are homeomorphisms $\psi_i:U_i:=\rho^{-1}(V_i)\to V_i\times T$ such that $\rho:U_i\to V_i$ corresponds to the first factor projection $V_i\times T\to V_i$ and the leaves of $\FF|_{U_i}$ correspond to the fibers of the second factor projection $V_i\times T\to T$. We get an induced foliated atlas $\UU=\{U_i,\xi_i\}$ of $X$, where $\xi_i=(\zeta_i\times\id_T)\psi_i:U_i\to B_i\times T'_i$ with $T'_i\equiv T$. Assuming obvious conditions on $\VV$, we get that $\UU$ is regular. Then $\UU$ induces a representative $\HH'$ of the holonomy pseudogroup of $X$ on $T'=\bigsqcup_iT'_i$. For any fixed index $i_0$, since $T'_{i_0}\equiv T$ meets all $\HH'$-orbits, by restricting $\HH'$ to $T'_{i_0}$, we get a pseudogroup $\HH$ on $T$ equivalent to $\HH'$, which is generated by the $\Gamma$-action on $T$. Thus $X$ is minimal, equicontinuous and strongly quasi-analytic (take $S=\Gamma$ to check the last two properties for $\HH$). Moreover $\ol\HH$ is generated by the $G$-action on $T$, and therefore $\ol\HH$ is also strongly quasi-analytic. So $X$ satisfies the conditions of Theorem~\ref{mt: Molino}.

Fix some $u_0\in T\equiv T'_{i_0}$, and consider the associated space $\widehat T'_0$ with the pseudogroup $\widehat\HH'_0$, and the associated representative of the Molino's description, $(G',\Gamma',H',\widehat X'_0\equiv(\widehat X'_0,\widehat\FF'_0),\hat\pi'_0)$, constructed like in the proof of Theorem~\ref{mt: Molino}. Then $\widehat T_0:=\widehat T'_{i_0,0}$ meets all $\widehat\HH'_0$-orbits, obtaining that $\widehat\HH'_0$ is equivalent to its restriction $\widehat\HH_0:=\widehat\HH'_0|_{\widehat T_0}$. Thus $\widehat T_0=\{\,\germ(g,u_0)\mid g\in G\,\}$ has the final topology induced by the map $G\to\widehat T_0$, $g\mapsto\germ(g,u_0)$. This map is a continuous bijection, and therefore it is a homeomorphism because $G$ is compact and $\widehat T_0$ is Hausdorff. So $\widehat T_0\equiv G$, $\widehat\HH$ is generated by the action of $G$ on itself by left translations, $G'$ is locally isomorphic to $G$, and $\hat\pi_0:\widehat T_0\equiv G\to T$ is the orbit map $g\mapsto g\cdot u_0$. The composition $\rho\hat\pi'_0:\widehat X'_0\to M$ is a fiber bundle with typical fiber $\widehat T_0\equiv G$, and $(\widehat X'_0,\rho\hat\pi'_0,\widehat\FF'_0)$ is also a flat bundle. Thus there is a foliated homeomorphism of $\widehat X'_0$ to $\widehat X_0\equiv(\widehat X_0,\widehat\FF_0):=L\times_\Gamma G$. Moreover
	\[
		H'\equiv H:=\{\,h\in G\mid h\cdot u_0=u_0\,\}\;,
	\]
the right $H'$-action on $\widehat X'_0$ corresponds to the right $H$-action on $\widehat X_0$ given by $[y,g]\cdot h=[y,gh]$, and the map $\hat\pi'_0:\widehat X'_0\to X$ corresponds to the map $\hat\pi_0:\widehat X_0\to X$ defined by $\hat\pi_0([y,g])=[y,g\cdot u_0]$, which is induced by the foliated map $\id_L\times\hat\pi_0:L\times G\to L\times T$. Thus $(G,\Gamma,H,\widehat X_0,\hat\pi_0)$ is another representative of the Molino's description, which will be used in the next examples.

If $M$ is $C^\infty$, its $C^\infty$ structure can be lifted to a $C^\infty$ structure on $L$, which in turn can be lifted to $L\times T$, which finally gives rise to a $C^\infty$ structure on $X$ so that the projection $\rho:X\to M$ is $C^\infty$ and $T\rho$ has isomorphic restrictions to the fibers. This can be similarly applied to $\widehat X_0$, obtaining the $C^\infty$ structure given by Proposition~\ref{p: C^infty Molino}. The same procedure can be applied to any Riemannian metric on $M$, obtaining induced Riemannian metrics on $X$ and $\widehat X_0$ so that the projections $\rho:X\to M$ and $\hat\pi_0:\widehat X_0\to X$ have locally isometric restrictions to the leaves.

The following result is well known. A proof is included for completeness.

\begin{prop}\label{p: the Gamma-action on T has no fixed points}
	The following properties are equivalent:
		\begin{enumerate}
		
			\item\label{i: the Gamma-action on T has no fixed points} The $\Gamma$-action on $T$ has no fixed points.
			
			\item\label{i: Gamma cap gHg^-1 = e} $\Gamma\cap gHg^{-1}=\{e\}$ for all $g\in G$. 
			
			\item\label{i: homeomorphic leaves} The canonical foliated projection $L\times T\to X$ restricts to homeomorphisms between the leaves.
			
		\end{enumerate}
\end{prop}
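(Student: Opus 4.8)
The plan is to prove the cycle of implications $\text{\eqref{i: the Gamma-action on T has no fixed points}}\Rightarrow\text{\eqref{i: Gamma cap gHg^-1 = e}}\Rightarrow\text{\eqref{i: homeomorphic leaves}}\Rightarrow\text{\eqref{i: the Gamma-action on T has no fixed points}}$, exploiting the suspension description and the computation of the Molino data worked out in Section~\ref{ss: suspensions}.

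For $\text{\eqref{i: the Gamma-action on T has no fixed points}}\Leftrightarrow\text{\eqref{i: Gamma cap gHg^-1 = e}}$ I would work purely at the level of the $G$-action on $T$. Since $G$ acts transitively on $T$ with isotropy group $H$ at $u_0$, every point of $T$ is of the form $g\cdot u_0$ for some $g\in G$, and its isotropy group is $gHg^{-1}$. A nontrivial element $\gamma\in\Gamma$ fixes the point $g\cdot u_0$ if and only if $\gamma\in gHg^{-1}$; hence $\Gamma$ has a fixed point in $T$ exactly when $\Gamma\cap gHg^{-1}\neq\{e\}$ for some $g\in G$. This is essentially a one-line unwinding of definitions, so I do not expect any obstacle here.

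For $\text{\eqref{i: the Gamma-action on T has no fixed points}}\Leftrightarrow\text{\eqref{i: homeomorphic leaves}}$ I would use that the leaves of $X=L\times_\Gamma T_{\text{\rm pt}}$ are the images of the slices $L\times\{u\}$ under the canonical foliated covering projection $\pi\colon L\times T_{\text{\rm pt}}\to X$, and that the restriction $\pi|_{L\times\{u\}}\colon L\to L_{[y,u]}$ is a covering map whose deck transformations come from the subgroup $\Gamma_u:=\{\,\gamma\in\Gamma\mid\gamma^{-1}\cdot u=u\,\}$ acting on $L$ by the given right $\Gamma$-action by covering transformations. Concretely, $(y,u)$ and $(y',u)$ have the same image iff $y'=y\cdot\gamma$ and $u=\gamma^{-1}\cdot u$ for some $\gamma\in\Gamma$, i.e.\ iff $\gamma\in\Gamma_u$ and $y'=y\cdot\gamma$. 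Since the $\Gamma$-action on $L$ by covering transformations is free, $\pi|_{L\times\{u\}}$ is injective precisely when $\Gamma_u=\{e\}$; and it is always surjective onto $L_{[y,u]}$. Thus $\pi$ restricts to homeomorphisms between the leaves iff $\Gamma_u=\{e\}$ for all $u\in T$, which is exactly statement~\eqref{i: the Gamma-action on T has no fixed points}. (One should also note $\Gamma_u=\Gamma\cap gHg^{-1}$ when $u=g\cdot u_0$, so this reproves the equivalence with~\eqref{i: Gamma cap gHg^-1 = e} and closes the circle.)

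The only mildly delicate point is the claim that the deck group of $\pi|_{L\times\{u\}}$ is $\Gamma_u$ acting via the right $\Gamma$-action on $L$, rather than something larger: a priori the fiber of $\pi$ over $[y,u]$ could meet $L\times\{u\}$ in a set not of the form $\{y\cdot\gamma\}$. But this is immediate from the definition of the equivalence relation defining $L\times_\Gamma T_{\text{\rm pt}}$, namely $(y,u)\sim(y\cdot\gamma,\gamma^{-1}\cdot u)$: two points of $L\times\{u\}$ are identified iff they differ by a $\gamma$ with $\gamma^{-1}\cdot u=u$. So I expect the whole argument to be short, with no real obstacle; the ``proof is included for completeness'' remark in the statement signals exactly this.
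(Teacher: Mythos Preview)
Your proposal is correct and follows essentially the same approach as the paper: the paper also proves the two equivalences $\text{(i)}\Leftrightarrow\text{(ii)}$ and $\text{(i)}\Leftrightarrow\text{(iii)}$ directly, the first by the same isotropy computation $\gamma\cdot(g\cdot u_0)=g\cdot u_0\Leftrightarrow\gamma\in gHg^{-1}$, and the second by the same observation that $[y,u]=[y',u]$ iff $y'=y\cdot\gamma$ for some $\gamma\in\Gamma$ fixing $u$. Your write-up is slightly more detailed (you spell out that $\Gamma$ acts freely on $L$, which is what makes injectivity of $\pi|_{L\times\{u\}}$ equivalent to $\Gamma_u=\{e\}$), but the content is identical; the only cosmetic mismatch is that you announce a cycle $\text{(i)}\Rightarrow\text{(ii)}\Rightarrow\text{(iii)}\Rightarrow\text{(i)}$ and then actually argue two biconditionals, just as the paper does.
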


\begin{proof}
	Let us prove ``$\text{\eqref{i: the Gamma-action on T has no fixed points}}\Leftrightarrow\text{\eqref{i: Gamma cap gHg^-1 = e}}$''. Given any $\gamma\in\Gamma$ and $u\in T$, take some $g\in G$ such that $u=g\cdot u_0$. Then
	\begin{align*}
		\gamma u=u&\Leftrightarrow\gamma g\cdot u_0=g\cdot u_0\Leftrightarrow g^{-1}\gamma g\cdot u_0=u_0\\
		&\Leftrightarrow g^{-1}\gamma g\in H\Leftrightarrow\gamma\in\Gamma\cap gHg^{-1}=\{e\}\Leftrightarrow\gamma=e\;.
	\end{align*}

	Let us prove ``$\text{\eqref{i: the Gamma-action on T has no fixed points}}\Leftrightarrow\text{\eqref{i: homeomorphic leaves}}$''. For all $y,y'\in L$ and $u\in T$, we have $[y,u]=[y',u]$ if and only if there is some $\gamma\in\Gamma$ such that $(y',u)=(y\cdot\gamma,\gamma^{-1}\cdot u)$, which means $\gamma=e$ and $y'=y$.
\end{proof}

When the conditions of Proposition~\ref{p: the Gamma-action on T has no fixed points} are satisfied, $X$ is strongly locally free (in particular, it has no holonomy), and all leaves are homeomorphic to $L$. If moreover $M$ is $C^\infty$/Riemannian, then $L\times T\to X$ restricts to diffeomorphisms/isometries between the leaves, obtaining that all leaves are diffeomorphic/isometric to $L$.

\subsection{The map $\hat\pi_0:\widehat X_0\to X$ may not be a principal bundle}\label{ss: hat pi_0 is not a principal bundle}

Consider the canonical inclusion $\SO(2)\subset\SO(3)$, and the canonical transitive analytic action of $\SO(3)$ on the sphere $S^2\equiv\SO(3)/\SO(2)$. We get an induced transitive quasi-analytic left action of the compact topological group $G:=\SO(3)^\N$ on the compact space $T:=(S^2)^\N$. Fix $u_0\in S^2$ whose isotropy group is $\SO(2)$, and let $\bar u_0=(u_0,u_0,\dots)\in T$. The orbit map $\SO(3)\to S^2$, $g\mapsto g\cdot u_0$, is a non-trivial principal $\SO(2)$-bundle, and therefore it has no global sections. Then, using the arguments of the first and second examples of \cite[Section~1]{Ramsay1991}, it easily follows that the orbit map $G\to T$, $(g_i)\mapsto(g_i)\cdot\bar u_0=(g_i\cdot u_0)$, has no local sections. Since $G$ is second countable, connected, compact and non-abelian, it contains a dense subgroup $\Gamma$ isomorphic to the fundamental group of the closed oriented surface $\Sigma_2$ of genus $2$ \cite[Corollary~8.3]{BreuillardGelanderSoutoStorm2006}. Let $L$ be the universal covering of $\Sigma_2$, which is diffeomorphic to the plane. Consider the corresponding suspension foliated space, $X=L\times_\Gamma T$, which satisfies the conditions of Theorem~\ref{mt: Molino}, and the corresponding Molino's description $(G,\Gamma,H,\widehat X_0,\hat\pi_0)$ constructed in Section~\ref{ss: suspensions}, where $\widehat X_0=L\times_\Gamma G$, $H=\SO(2)^\N$, the right $H$-action on $\widehat X_0$ is given by $[y,g]\cdot h=[y,gh]$, and the map $\hat\pi_0:\widehat X_0\to X$ is defined by $\hat\pi_0([y,g])=[y,g\cdot u_0]$.

\begin{prop}\label{p: hat pi_0 is not a principal bundle}
	The map $\hat\pi_0:\widehat X_0\to X$ has no local sections, and therefore it cannot be a principal $H$-bundle. 
\end{prop}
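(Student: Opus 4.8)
The plan is to argue by contradiction: from a hypothetical local section of $\hat\pi_0$ I would extract a local section of the orbit map $\sigma\colon G\to T$, $g\mapsto g\cdot\bar u_0$, which has already been shown to have no local sections. The assertion about principal bundles then follows at once, since every principal $H$-bundle is locally trivial and hence admits local sections.

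First I would set up the two covering presentations from the suspension construction of Section~\ref{ss: suspensions}: the foliated covering $q\colon L\times T_{\text{\rm pt}}\to X=L\times_\Gamma T_{\text{\rm pt}}$ and the foliated covering $\hat q\colon L\times G_{\text{\rm pt}}\to\widehat X_0=L\times_\Gamma G_{\text{\rm pt}}$, both obtained from free and properly discontinuous diagonal $\Gamma$-actions, together with the identity $\hat\pi_0\circ\hat q=q\circ(\id_L\times\sigma)$ (valid because $\hat\pi_0$ is induced by $\id_L\times\sigma$). Since $\Gamma$ acts freely and properly discontinuously on $L$, every $y_0\in L$ has an open neighbourhood $N$ with $N\cdot\gamma\cap N=\emptyset$ for all $\gamma\in\Gamma\setminus\{e\}$; for such an $N$ the quotient descriptions of $X$ and $\widehat X_0$ show that $\iota:=q|_{N\times T}$ and $\hat\iota:=\hat q|_{N\times G}$ are injective, and they are open maps into $X$ and $\widehat X_0$ because $q$ and $\hat q$ are covering maps, hence homeomorphisms onto open subsets. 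Because $\sigma$ is surjective ($G$ acting transitively on $T$), the displayed identity gives $\hat\pi_0(\im\hat\iota)=\im\iota$ and $\iota^{-1}\circ\hat\pi_0\circ\hat\iota=\id_N\times\sigma$ on $N\times G$.

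Next I would take, for contradiction, a continuous section $s\colon V\to\widehat X_0$ of $\hat\pi_0$ over some nonempty open $V\subseteq X$, fix a point $[y_0,t_0]\in V$, and rewrite $z_0:=s([y_0,t_0])$ using a $\Gamma$-translate of its representing pair so that $z_0=[y_0,g_1]$ with $g_1\cdot\bar u_0=t_0$. With $N\ni y_0$ chosen as above we get $z_0\in\im\hat\iota$, so by continuity of $s$ there is an open $V'$ with $[y_0,t_0]\in V'\subseteq V\cap\im\iota$ and $s(V')\subseteq\im\hat\iota$. Then $\tilde s:=\hat\iota^{-1}\circ s\circ\iota$ is a continuous map on the open neighbourhood $\iota^{-1}(V')$ of $(y_0,t_0)$ in $N\times T$, and $(\id_N\times\sigma)\circ\tilde s=\id$ follows from $\iota^{-1}\circ\hat\pi_0\circ\hat\iota=\id_N\times\sigma$ together with $\hat\pi_0\circ s=\id$. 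Restricting to a product neighbourhood $N'\times T''$ of $(y_0,t_0)$ and writing $\tilde s(y,t)=(y,\beta(y,t))$, the map $t\mapsto\beta(y_0,t)$ is a continuous section of $\sigma$ over the open neighbourhood $T''$ of $t_0$; since $G$ acts on $T$ by homeomorphisms and $\sigma(hg)=h\cdot\sigma(g)$, translating by $g_1$ converts it into a continuous section of $\sigma$ over an open neighbourhood of $\bar u_0$, which is the desired contradiction.

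The main point to handle with care will be the bookkeeping with the two coverings: one must select the representative $z_0=[y_0,g_1]$ with the same first coordinate $y_0$ as the chosen base point --- which is exactly what forces $g_1\cdot\bar u_0=t_0$ --- and then shrink $V$ so that $s$ takes values in the single sheet $\im\hat\iota$ lying over the single sheet $\im\iota$. Once this is arranged, $\tilde s$ is literally a section of the product map $\id_N\times\sigma$, and reading off the forbidden section of $\sigma$ is immediate; the remaining ingredients (the homeomorphism properties of $\iota$ and $\hat\iota$, and the equivariance $\sigma(hg)=h\cdot\sigma(g)$) are elementary.
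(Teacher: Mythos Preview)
Your proof is correct and follows essentially the same idea as the paper's: lift a hypothetical local section of $\hat\pi_0$ through the covering presentations to obtain a local section of the orbit map $\sigma\colon G\to T$, contradicting the fact (established just before the proposition) that $\sigma$ has no local sections. The paper compresses this into a single sentence, noting that $\hat\pi_0$ is induced by $\id_L\times\sigma$ so that any local section of $\hat\pi_0$ with small enough domain yields one of $\sigma$; you have spelled out the covering-map bookkeeping that makes this rigorous. One minor remark: the final translation by $g_1$ to move the section from a neighbourhood of $t_0$ to a neighbourhood of $\bar u_0$ is unnecessary, since the earlier argument already shows $\sigma$ has no local sections over \emph{any} open subset of $T$.
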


\begin{proof}
	Since $\hat\pi_0:\widehat X_0\to X$ is induced by $\id_L\times\hat\pi_0:L\times G\to L\times T$, any local section of $\hat\pi_0$ with small enough domain defines a local section of $\hat\pi_0:G\to T$. But this map has no local sections.
\end{proof}

\subsection{Foliated homogeneity may not be told by the leaves}
\label{ss: foliated homogeneity may not be told by the leaves}

\begin{prop}\label{p: foliated homogeneous => the leaves are homeomorphic one another}
If $X$ is foliated homogeneous, then it is without holonomy, and all of its leaves are homeomorphic one another. If moreover $X$ is $C^\infty$ (respectively, compact and Riemannian), then all of its leaves are diffeomorphic (respectively, quasi-isometrically diffeomorphic) to each other.
\end{prop}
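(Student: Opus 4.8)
The plan is to exploit two elementary facts about a foliated homeomorphism $\phi$ of $X$ — it restricts to homeomorphisms between leaves, and it conjugates holonomy groups of leaves — and then let transitivity of the canonical action of $\Homeo(X,\FF)$ on $X$ do the rest; the refinements follow by running the same argument with smooth, and with compactness-of-$X$, estimates.

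First I would record that every $\phi\in\Homeo(X,\FF)$ restricts to a homeomorphism $\phi|_{L_x}\colon L_x\to L_{\phi(x)}$ for each $x\in X$: in a local representation $\xi'_a\phi\xi_i^{-1}(\bv,u)=(\phi^1_{ai}(\bv,u),\phi^2_{ai}(u))$ as in~\eqref{xi'_a phi xi_i^-1}, a plaque $\{u=\mathrm{const}\}$ is carried into the single plaque $\{u'=\phi^2_{ai}(u)\}$ by the continuous map $\bv\mapsto\phi^1_{ai}(\bv,u)$, so $\phi$ is continuous for the leaf topologies, and likewise $\phi^{-1}$. Since $\Homeo(X,\FF)$ acts transitively on $X$, for any $x,y\in X$ there is $\phi$ with $\phi(x)=y$, and then $\phi|_{L_x}$ is a homeomorphism $L_x\to L_y$; this proves that all leaves are pairwise homeomorphic.

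Next I would treat the holonomy. A foliated homeomorphism is a uniform transverse equivalence, so the induced morphism $\Phi\colon\HH\to\HH$ is an isomorphism, generated by homeomorphisms $\phi_0$ between open subsets of $T$ with $\phi_0\HH\phi_0^{-1}\subset\HH$ locally. Concretely, if a leafwise loop $c$ based at $x$ is $\UU$-covered by $\II$, then $\phi c$ is $\UU$-covered by some admissible sequence $\II'$, and $\germ(h_{\II'},u')=\phi_0\,\germ(h_\II,u)\,\phi_0^{-1}$, where $u=p_i(x)$, $u'=p_a(\phi(x))$, and $\phi_0$ is the germ of $\phi^2_{ai}$ at $u$; hence conjugation by $\phi_0$ identifies $\Hol(L_x,x)$ with $\Hol(L_{\phi(x)},\phi(x))$, so $L_x$ is without holonomy iff $L_{\phi(x)}$ is. Since the union of leaves without holonomy is dense in $X$, it is nonempty: picking $x_0$ in such a leaf and, for arbitrary $y\in X$, some $\phi\in\Homeo(X,\FF)$ with $\phi(x_0)=y$, we conclude that $L_y$ has no holonomy, so $X$ is without holonomy.

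Finally, the refinements. When $X$ is $C^\infty$ the canonical action of $\Diffeo(X,\FF)$ on $X$ is to be taken transitive here (cf.\ Corollary~\ref{c: C^infty foliated homogeneous <=> foliated homogeneous} when $X$ is moreover compact and minimal), and the previous argument with $\phi\in\Diffeo(X,\FF)$ shows each $\phi|_{L_x}$ is a $C^\infty$ diffeomorphism onto $L_{\phi(x)}$, since now $\bv\mapsto\phi^1_{ai}(\bv,u)$ and its inverse are $C^\infty$; by transitivity all leaves are pairwise diffeomorphic. If in addition $X$ is compact and carries a leafwise Riemannian metric, then $T\phi$ and $T\phi^{-1}$ are continuous bundle maps of $T\FF$, hence of operator norm bounded by some $C\ge1$ on the compact leafwise unit sphere bundle; therefore $\phi$ changes leafwise arclength by at most the factor $C$ in either direction, so $\phi|_{L_x}$ is a $C$-bi-Lipschitz diffeomorphism onto $L_{\phi(x)}$, in particular a quasi-isometric diffeomorphism, and transitivity concludes. (Alternatively: $\phi^*g$ is another leafwise metric on the compact $X$, so $(L_x,g)$ is quasi-isometric to $(L_x,\phi^*g)$ by the metric-independence of the quasi-isometry type, while $\phi$ is an isometry of $(L_x,\phi^*g)$ onto $(L_{\phi(x)},g)$.) The step needing the most care is the holonomy-invariance of the second paragraph — making precise, within the pseudogroup formalism, that the isotropy group of an $\HH$-orbit, i.e.\ the holonomy group of the corresponding leaf, is carried isomorphically by the equivalence induced by a foliated homeomorphism; the rest is bookkeeping with foliated charts and, at the end, compactness of $X$.
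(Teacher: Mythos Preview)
Your argument is correct and is exactly the elementary reasoning the paper has in mind: its proof reads in full ``Elementary, using that there always exist leaves without holonomy in the first assertion,'' and you have spelled out precisely that---foliated homeomorphisms restrict to leaf homeomorphisms and conjugate holonomy groups, so transitivity plus the existence of a leaf without holonomy gives both conclusions. Your handling of the $C^\infty$ and Riemannian refinements (reading the hypothesis as transitivity of $\Diffeo(X,\FF)$, with the parenthetical appeal to Corollary~\ref{c: C^infty foliated homogeneous <=> foliated homogeneous} in the compact minimal case) is the natural interpretation, and your bi-Lipschitz/pullback-metric argument for the quasi-isometric case is the standard one.
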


\begin{proof}
	Elementary, using that there always exist leaves without holonomy in the first assertion, and using that the differentiable quasi-isometry class of the leaves is independent of the choice of the Riemannian metric on $X$ in the last assertion (see e.g.\ \cite[Proposition~10.5]{AlvarezCandel2018}). 
\end{proof}

Let us exhibit an example where the reciprocal of Proposition~\ref{p: foliated homogeneous => the leaves are homeomorphic one another} does not hold. To begin with, let $G_1$ and $G_2$ be second countable, connected compact topological groups, and let $G=G_1\times G_2$. Assume that $G_1$ is non-abelian. Let us use the notation $g=(g_1,g_2)$ for the elements of $G$; in particular, we use $e=(e_1,e_2)$ for the identity element.

\begin{prop}\label{p: PP}
	There exists a subset $\PP\subset G\times G$, which is both residual and of full Haar measure, such that, for all $(g,h)\in\PP$, the subgroup $\langle g,h\rangle$ is dense in $G$ and freely generated by $g$ and $h$, and $\langle g,h\rangle\cap(\{e_1\}\times G_2)=\{e\}$.
\end{prop}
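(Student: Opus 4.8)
The plan is to build $\PP$ as a countable intersection of open dense sets, each of which also has full Haar measure, so that $\PP$ is both residual and conull by Baire and by countable additivity of the complement. There are three families of conditions to impose on $(g,h)\in G\times G$: density of $\langle g,h\rangle$, freeness of the pair $\{g,h\}$, and the algebraic condition $\langle g,h\rangle\cap(\{e_1\}\times G_2)=\{e\}$. I would treat each separately and then intersect.

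\textbf{Density.} Since $G$ is second countable and compact, fix a countable base $\{V_n\}$ of nonempty open sets. For each $n$ let $\OO_n=\{\,(g,h)\mid \text{some word in }g,h\text{ meets }V_n\,\}=\bigcup_{w}\{(g,h)\mid w(g,h)\in V_n\}$, the union over all reduced words $w$ in two letters; each set $\{(g,h):w(g,h)\in V_n\}$ is open because word maps $G\times G\to G$ are continuous, so $\OO_n$ is open, and it is dense because $G$ is connected and one can perturb $(g,h)$ to land $w(g,h)$ anywhere in a neighbourhood (already $w(g,h)=g$ shows $\OO_n\supset\{g\in V_n\}\times G$, which is dense). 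For full measure one uses that $\{(g,h):g\in V_n\}$ already has positive measure and invoke a Fubini/translation argument; more robustly, $\bigcap_n\OO_n$ is exactly the set of pairs generating a dense subgroup, and it is standard (e.g.\ by the argument cited in the genus-2 example, \cite[Corollary~8.3]{BreuillardGelanderSoutoStorm2006}, or directly) that this set is conull in a connected compact group. I would cite or reprove this as a lemma.

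\textbf{Freeness.} For each nontrivial reduced word $w=w(x,y)$ in the free group $F_2$, let $N_w=\{(g,h)\mid w(g,h)=e\}$. This is a closed subset of $G\times G$, and it is a \emph{proper} closed subset: because $G_1$ is non-abelian (hence contains a free subgroup on two generators after passing to a dense subgroup, or more simply: a non-abelian connected compact Lie-like group — here one should be careful and instead argue that $G_1$, being non-abelian connected compact, is not a variety of groups satisfying $w\equiv e$), there exist $g,h$ with $w(g,h)\ne e$, so $N_w\ne G\times G$. A proper closed analytic-type subset of $G\times G$ has empty interior and Haar measure zero; the cleanest route is: $w:G\times G\to G$ is a real-analytic map of compact connected real-analytic groups (finite-dimensional, since second countable compact groups are inverse limits of Lie groups — here I would reduce to the Lie case or quote that a word map on a connected compact group is either constant $e$ or has conull image-complement of its zero set). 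So $G\times G\setminus\bigcup_{w\ne e}N_w$ is residual and conull, and on it $\{g,h\}$ freely generates $\langle g,h\rangle$. The subtlety is making "proper closed implies null" rigorous for general (non-Lie) $G$; I expect this to be the main obstacle, and I would handle it by first reducing modulo the projective limit structure: write $G=\varprojlim G^{(k)}$ with $G^{(k)}$ compact Lie, choose $k$ large enough that $G_1^{(k)}$ is already non-abelian, run the Lie-group argument there, and pull back.

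\textbf{The intersection condition.} Here I would argue that $\langle g,h\rangle\cap(\{e_1\}\times G_2)=\{e\}$ is implied, or can be arranged, by the freeness together with density: if $w(g,h)\in\{e_1\}\times G_2$ for some reduced word $w\ne e$, then $\pr_1(w(g,h))=w(g_1,h_1)=e_1$ in $G_1$. So the bad set is $\bigcup_{w\ne e}\{(g,h):w(g_1,h_1)=e_1\}$, which is a union of proper closed null sets in $G_1\times G_1$ (again using $G_1$ non-abelian, via the Lie reduction above) pulled back to $G\times G$ — each is closed, proper, hence null and nowhere dense. Intersecting its complement with the previous two sets gives a residual conull $\PP$ on which all three properties hold. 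Finally I would note density of $\langle g,h\rangle$ in $G$ is retained since $\PP\subset\bigcap_n\OO_n$. Thus $\PP$ is residual and of full Haar measure, completing the proof.
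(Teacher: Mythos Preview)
Your argument is correct and the key reduction is exactly the one the paper uses: the condition $\langle g,h\rangle\cap(\{e_1\}\times G_2)=\{e\}$ is equivalent to $w(g_1,h_1)\ne e_1$ for every nontrivial reduced word $w$, i.e., to freeness of the pair $(g_1,h_1)$ in $G_1$. The paper's proof, however, is a two-line application of \cite[Proposition~8.2]{BreuillardGelanderSoutoStorm2006}, which already provides residual conull sets $\OO\subset G\times G$ and $\OO_1\subset G_1\times G_1$ on which the generated subgroup is dense and free; the paper simply takes $\PP=\OO\cap\{(g,h):(g_1,h_1)\in\OO_1\}$, the pullback of $\OO_1$ being residual and conull by openness of the projection and Fubini.

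What you have written is essentially a sketch of the proof of that cited proposition, together with the same projection-to-$G_1$ trick. The technical point you flag---that proper closed word-varieties are null in a possibly non-Lie compact group---is genuine and your inverse-limit reduction is the right way to handle it, but this is precisely the work that \cite{BreuillardGelanderSoutoStorm2006} has already done. One minor redundancy: your second step (freeness in $G$) is implied by your third (freeness of the $G_1$-projection), since $w(g_1,h_1)\ne e_1$ forces $w(g,h)\ne e$; so you could drop it, as the paper implicitly does.
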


\begin{proof}
	By \cite[Proposition~8.2]{BreuillardGelanderSoutoStorm2006}, there are subsets, $\OO\subset G\times G$ and $\OO_1\subset G_1\times G_1$, which are residual and of full Haar measure, such that, for all $(g,h)\in\OO$ and $(a,b)\in\OO_1$, the subgroup $\langle g,h\rangle$ (respectively, $\langle a,b\rangle$) is dense in $G$ (respectively, $G_1$) and freely generated by $g$ and $h$ (respectively, $a$ and $b$). Then the statement is satisfied with
		\[
			\PP=\OO\cap\{\,(g,h)\in G\times G\mid(g_1,h_1)\in\OO_1\,\}\;.\qed
		\]
\renewcommand{\qed}{}
\end{proof}

Take $G_2=\SO(3)$, and consider $\SO(2)\subset\SO(3)$ and $S^2\equiv\SO(3)/\SO(2)$ like in Section~\ref{ss: hat pi_0 is not a principal bundle}. By Proposition~\ref{p: PP}, $G$ has a dense subgroup $\Gamma$ freely generated by two elements such that $\Gamma\cap(\{e_1\}\times\SO(3))=\{e\}$. Hence the first factor projection $G_1\times\SO(3)\to G_1$ restricts to an injection $\Gamma\to G_1$, and $\Gamma$ does not meet any conjugate of $\{e_1\}\times\SO(2)$ in $G$ (all of them are contained in $\{e_1\}\times\SO(3)$). Consider the canonical left action of $G$ and $\Gamma$ on $T:=G_1\times S^2\equiv G/(\{e_1\}\times\SO(2))$. There is a regular covering $L$ of the closed oriented surface of genus two, $\Sigma_2$, whose group of covering transformations is isomorphic to $\Gamma$. Consider the corresponding suspension foliated space, $X=L\times_\Gamma T$, which satisfies the conditions of Theorem~\ref{mt: Molino}, and the corresponding Molino's description $(G,\Gamma,H,\widehat X_0,\hat\pi_0)$ constructed in Section~\ref{ss: suspensions}, where $\widehat X_0=L\times_\Gamma G$, $H=\SO(2)$, the right $H$-action on $\widehat X_0$ is given by $[y,g]\cdot h=[y,gh]$, and the map $\hat\pi_0:\widehat X_0\to X$ is defined by $\hat\pi_0([y,g])=[y,g\cdot u_0]$. We can equip $\Sigma_2$ with $C^\infty$ and Riemannian structures, and consider the induced $C^\infty$ and Riemannian structures on $X$ and $\widehat X_0$. 

Since $H\ne\{e\}$, $X$ is not foliated homogeneous by Theorem~\ref{mt: C^infty foliated homogeneous <=> G-fol sp} (or Theorem~\ref{mt: foliated homogeneous => G-foliated space} and Proposition~\ref{p: G-foliated space <=> H = e}). However this cannot be seen by comparing any pair of leaves since all of them are isometric to $L$, and $X$ has no holonomy by ``$\text{\eqref{i: Gamma cap gHg^-1 = e}}\Leftrightarrow\text{\eqref{i: homeomorphic leaves}}$'' in Proposition~\ref{p: the Gamma-action on T has no fixed points}.

This argument cannot produce matchbox manifolds because Proposition~\ref{p: PP} requires $G$ to be connected to apply \cite[Proposition~8.2]{BreuillardGelanderSoutoStorm2006}. Examples with totally disconnected local transversals are given in  \cite[Theorem~35]{FokkinkOversteegen2002} and \cite[Theorem~10.7]{DyerHurderLukina2017}.

\subsection{Inverse limits of minimal Lie foliations}\label{ss: inverse limits of Lie foliations}

This example was suggested by S.~Hurder. Let $(X,\GG)$ be the McCord solenoid defined as the projective limit of a tower of non-trivial regular coverings between closed connected manifolds,
	\[
		\begin{CD}
			\cdots \to M_k @>{\phi_k}>> M_{k-1}\to \cdots \to M_0\;.
		\end{CD}
	\]
Let $\Gamma_k=\pi_1(M_k)$, and consider the induced tower of homomorphisms between finite groups,
	\[
		\cdots\to\Gamma_0/\Gamma_k\to\Gamma_0/\Gamma_{k-1}\to\cdots\to\Gamma_0/\Gamma_1\;,
	\]
whose inverse limit $K$ contains a canonical dense copy of $\Gamma_0$. Then $(X,\GG)$ can be also described as the suspension foliated space $\widetilde M_0\times_{\Gamma_0}K$, where $\widetilde M_0$ is the universal covering of $M_0$. We get induced maps $\psi_k:X\to M_k$, whose restrictions to the leaves are covering maps. Suppose that $M_0$ is equipped with a minimal Lie $G_0$-foliation $\FF_0$, for some simply connected Lie group $G_0$. Then every $M_k$ can be endowed with the minimal Lie $G_0$-foliation $\FF_k:=(\phi_1\cdots\phi_k)^*\FF_0$. On every $\GG$-leaf $M$, consider the pull-back of $\FF_0$ by $\psi_0:M\to M_0$. These foliations on all leaves of $\GG$ can be combined to form a foliated structure $\FF$ on $X$, which is a ``Lie $G_0$-subfoliated structure'' of $\GG$ in an obvious sense. We can write $\FF=\psi_0^*\FF_0$, which equals $\psi_k^*\FF_k$ for all $k$. Extending the notation of suspensions, we can also write $(X,\FF)=(\widetilde M_0,\widetilde\FF_0)\times_{\Gamma_0}K$, where $\widetilde\FF_0$ is the lift of $\FF_0$. It easily follows that $(X,\FF)$ is a minimal $G$-foliated space for $G=G_0\times K$.

\subsection{Open problems}\label{ss: open problems}

\subsubsection{Strong quasi-analyticity of $\ol\HH$}

This problem was proposed in \cite{AlvarezMoreira2016}. It is really unknown to the authors if the strong quasi-analyticity of $\ol\HH$ is needed in Theorem~\ref{mt: Molino}. More precisely, assuming that $\HH$ is a minimal compactly generated equicontinuous strongly quasi-analytic pseudogroup, is $\ol\HH$ strongly quasi-analytic? If minimality is not assumed, then counterexamples can be easily given. But the minimal case seems to be an interesting open problem. Among the wild matchbox solenoids of \cite{HurderLukina-Wild} there might be counterexamples. 

\subsubsection{Functoriality, universality and uniqueness of the Molino's description}

It would be desirable to have a uniqueness of the Molino's description stronger than Proposition~\ref{p: equivalent Molino}, stating that not only the structures $(G,\Gamma,H,\widehat X_0,\hat\pi_0)$ constructed in the proof of Theorem~\ref{mt: Molino}, but also all possible structures $(G,\Gamma,H,\widehat X_0,\hat\pi_0)$ satisfying the conditions of its statement are equivalent. This would follow by showing a universality property, which in turn would follow by exhibiting its functoriality with respect to some kind of foliated maps. Since the definition of $\widehat X_0$ uses germs of maps in $\ol\HH$, the functoriality of Molino's description could be achieved by showing that foliated maps between equicontinuous foliated spaces induce morphisms between the closures of their holonomy pseudogroups. This would be an extension of the case of Riemannian foliations, solved in \cite{AlvarezMasa2008,AlvarezMasa2006}. Such functoriality, universality and uniqueness of the Molino's description is not even proved in the Riemannian foliation case. A direct consequence would be that $H$ is finite if and only if $X$ is a {\em virtually foliated homogeneous\/} foliated space (a finite fold covering of $X$ is foliated homogeneous as a foliated space). When $X$ is an equicontinuous matchbox manifold, Dyer, Hurder and Lukina have shown that, if $H$ is finite, then $X$ is virtually homogeneous (a finite fold covering of $X$ is homogeneous) \cite[Theorem 1.13 and Corollary~1.14]{DyerHurderLukina2016}.

\subsubsection{How large is the class of inverse limits of minimal Lie foliations?}

Since any metrizable locally compact local group of finite topological dimension is locally isomorphic to the direct product of a Lie group and a compact zero-dimensional topological group \cite[Theorem~107]{Jacoby1957}, it was asked by S.~Hurder whether any compact minimal foliated homogeneous foliated space of finite ``topological codimension'' can be realized as inverse limit of minimal Lie foliations, like in Section~\ref{ss: inverse limits of Lie foliations}. This would generalize the results of \cite{ClarkHurder2013} (see also \cite{AlcaldeLozanoMacho2011}), where an affirmative answer is given for homogeneous matchbox manifolds (the case of codimension zero). If this is true, using also the Molino's description, it could be possible to prove that any equicontinuous foliated space satisfying the conditions of Theorem~\ref{mt: Molino} is an inverse limit of Riemannian foliations.

\subsubsection{Molino's descriptions without assuming strong quasi-analyticity}

This problem arises from the Molino spaces constructed by Dyer, Hurder and Lukina in \cite{DyerHurderLukina2017} for equicontinuous matchbox manifolds, where strong quasi-analyticity is not needed. Their Molino spaces are also foliated homogeneous, and their leaves cover the leaves of the original matchbox, but they may not be unique. Thus the following question is natural: does there exist this kind of Molino spaces for arbitrary compact minimal equicontinuous foliated spaces?



\providecommand{\bysame}{\leavevmode\hbox to3em{\hrulefill}\thinspace}
\providecommand{\MR}{\relax\ifhmode\unskip\space\fi MR }
\providecommand{\MRhref}[2]{%
  \href{http://www.ams.org/mathscinet-getitem?mr=#1}{#2}
}
\providecommand{\href}[2]{#2}

\end{document}